\patchcmd{\@makechapterhead}{50\p@}{0pt}{}{}
\patchcmd{\@makeschapterhead}{50\p@}{0pt}{}{}
\newtheorem{theo+}           {Theorem}      [section]
\newtheorem{prop+}  [theo+]  {Proposition}
\newtheorem{coro+}  [theo+]  {Corollary}
\newtheorem{lemm+}  [theo+]  {Lemma}
\newtheorem{defi+}  [theo+]  {Definition}
\newtheorem{conj+}  [theo+]  {Conjecture}
\theoremstyle{definition}
\newtheorem{rema+}  [theo+]  {Remark}
\newtheorem{prob+}   {Exercise}[section]
\newtheorem{exam+}  [theo+]  {Example}
\newenvironment{theorem}{\begin{theo+}}{\end{theo+}}
\newenvironment{proposition}{\begin{prop+}}{\end{prop+}}
\newenvironment{corollary}{\begin{coro+}}{\end{coro+}}
\newenvironment{lemma}{\begin{lemm+}}{\end{lemm+}}
\newenvironment{problem}  {\begin{prob+}}{\end{prob+}}
\newcommand{\ti}{\mathrm i}
\DeclareMathOperator*{\Res}{Res}
\newcommand{\sn}{\operatorname{sn}}
\newcommand{\cn}{\operatorname{cn}}
\newcommand{\dn}{\operatorname{dn}}
\begin{document}

\title
{Elliptic Hypergeometric Functions\\
${}$\\
Lectures at OPSF-S6\\
College Park, Maryland, 11-15 July 2016}
\author{Hjalmar Rosengren\\
Department of Mathematical Sciences\\
Chalmers University of Technology and University of Gothenburg}
\date{}

\maketitle        
\tableofcontents

\chapter*{Preface}
\addcontentsline{toc}{chapter}{Preface}

Various physical models and mathematical objects come in three levels: rational, trigonometric and elliptic.
A classical example is Weierstrass's theorem, which states that a meromorphic one-variable function $f$  satisfying an algebraic addition theorem, that is,
\begin{equation}\label{wad} P\big(f(w),f(z),f(w+z)\big)\equiv 0,\end{equation}
for some polynomial $P$, is either rational, trigonometric or elliptic. Here, trigonometric means that $f(z)=g(q^z)$ for some
rational function $g$, where $q$ is a fixed number. Writing $q=e^{2\ti\pi\eta}$, one may express $f$ in terms of trigonometric functions.
Elliptic means that $f$ has two independent periods.
If we let one of these periods tend to infinity, elliptic solutions of \eqref{wad} degenerate to trigonometric ones. Letting the remaining period
tend to infinity, we recover rational solutions.

Further examples of the hierarchy rational -- trigonometric -- elliptic are abundant in the context
of classical and quantum integrable systems. 
Integrability is closely related to exact solvability, which means that some physically interesting quantities can be computed 
exactly.  The meaning of the word ``exactly'' is  loose but the answer may, for instance, involve hypergeometric functions.
From this perspective, it is not surprising that there is a hierarchy of rational, trigonometric and elliptic hypergeometric functions.
What is perhaps more surprising is that only the first two levels were known classically, with fundamental contributions by mathematicians such as
Euler, Gauss, Cauchy and Heine. Elliptic hypergeometric functions appeared much later, first in the work of Date et al.\ \cite{d} from 1988 and 
more explicitly in the 1997 paper \cite{ft} by Frenkel and Turaev. 
These authors only consider elliptic hypergeometric functions defined by finite sums. An important step forward was taken by
Spiridonov \cite{sp1,scb} who introduced elliptic hypergeometric integrals.
Since the turn of the millenium, development has been  rapid;
in June 2017, the on-line bibliography  \cite{rb}
contained 186 entries. A new wave of interest from physicists was initiated by Dolan and Osborn \cite{do}, who found that elliptic hypergeometric integrals appear in the context of four-dimensional quantum field theories.

The purpose of the present notes is to give an elementary introduction to elliptic hypergeometric functions.
They were written for the summer school OPSF-S6 on orthogonal polynomials and special functions, but I hope that they can be useful also
in other contexts.  I focus on motivating  and exemplifying the main ideas, rather than giving a comprehensive survey of relevant results. The required background knowledge is modest
and should be covered by a first course in complex analysis and some basic notions from linear and abstract algebra.
Previous acquaintance with special functions  will make the material
easier to digest, but is not required.

Chapter \ref{esc} provides a brief introduction to elliptic functions. 
The presentation  may seem idiosyncratic to some readers, but I believe it is
 close to the thinking of many contemporary researchers on elliptic integrable systems.
Most textbooks follow a combination of Jacobi's and Weierstrass's approaches, which hides the elegance of the theory
by cumbersome and (at least for our purposes)  useless notation. 
My philosophy has been to completely avoid notation for specific elliptic functions. Another point is to consistently work with expressions for
theta functions and elliptic functions as infinite products rather than series. In my opinion, this is more natural and often simplifies the theory.

The main part of the text is Chapter \ref{ehc}, where I give an introduction to elliptic hypergeometric sums and integrals. Although a large
part of the literature deals with multivariable functions, I have decided to restrict to the one-variable theory.
The main results are then the Frenkel--Turaev summation and Spiridonov's elliptic beta integral evaluation. To give an 
indication of further results, I also present a quadratic summation and a Karlsson--Minton-type summation. Finally, in Chapter~\ref{slc} I briefly 
explain the historical origin of elliptic hypergeometric functions in the context of solvable lattice models.
In particular, I give a new proof of the fact that fused Boltzmann weights for Baxter's elliptic solid-on-solid model can be expressed as
elliptic hypergeometric sums.

As they are based on a one-week course, the present lecture notes are very limited in scope. Let me provide some
suggestions for further reading.  More extensive introductions to elliptic hypergeometric functions are given in  Chapter~11 of the textbook \cite{gr} and in the survey \cite{ss}. For multivariable elliptic hypergeometric sums, a natural starting point would be \cite{rr}, where some of the more accessible results
are derived in an elementary manner much in the spirit of the present notes. Rains \cite{r1,r2} goes much further, introducing elliptic extensions of Okounkov's interpolation polynomials and Koornwinder--Macdonald polynomials. 
A succinct but rather comprehensive overview of multivariable elliptic hypergeometric functions is given in \cite{rw}.
The reader interested in relations to the Sklyanin algebra and other elliptic quantum groups could start with \cite{ds,knr,rsk}.
One  emerging research area is ``elliptic combinatorics'', where combinatorial objects are dressed with elliptic weight functions, see e.g.\ 
\cite{be,se}. In mathematical physics, there is much on-going activity on relations to four-dimensional supersymmetric quantum field theories. 
For someone with my own mathematical background the literature is hard to get into, but I recommend the reader to have a look at
 \cite{sv1,sv2}, where many ($>100$)  intriguing new  integral identities are conjectured. A related topic is connections between elliptic hypergeometric integrals and 
 two-dimensional lattice models with continuous spin, see \cite{bs,ses} for a start. Some of these recent applications in physics are briefly surveyed in \cite{spn}. Naturally, the above selection is  biased by my own taste and interests. A more complete list of references can be found in \cite{rb}.

{\bf Acknowledgements:} I would like to thank the organizers of OPSF-S6 for inviting me. I am grateful Gaurav Bhatnagar and Linnea Hietala, as well as the anonymous referees, for many useful comments on the manuscript. Finally, I  thank all the students who followed the lectures and contributed to the course.

\chapter{Elliptic functions}
\label{esc}

\section{Definitions}\label{eds}

The classical definition of
an \emph{elliptic function} is a meromorphic function $f$ on $\mathbb C$ with two  periods $\eta$ and $\tau$, that is,
\begin{equation}\label{dp}f(z+\eta)=f(z+\tau)=f(z),\qquad z\in\mathbb C. \end{equation}
To avoid trivialities, one assumes that $\eta$ and $\tau$ are non-zero and $\tau/\eta\notin\mathbb R$.
 Possibly interchanging $\eta$ and $\tau$, we may assume that $\operatorname{Im}(\tau/\eta)>0$.
 Finally, after the change of variables $z\mapsto \eta z$, we may take $\eta=1$.
Thus, it is enough to consider meromorphic functions satisfying
\begin{equation}\label{ap}f(z+1)=f(z+\tau)=f(z),\qquad z\in\mathbb C,\end{equation}
where $\operatorname{Im}(\tau)>0$. 

This ``additive'' definition goes back to Abel's memoir from 1827 and previous unpublished work of Gauss.
We will mostly work with an equivalent, ``multiplicative'', definition.
Note first that, if $f$ is a meromorphic function satisfying $f(z+1)=f(z)$, then we
can introduce a new function $g$ by $f(z)=g(e^{2\ti\pi z})$. Then, $g$ is meromorphic on the punctured plane $\mathbb C^\ast=\mathbb C\setminus\{0\}$.
The periodicity $f(z+\tau)=f(z)$ is  equivalent to $g(px)=g(x)$, where $p=e^{2\ti\pi \tau}$. Thus, we can alternatively define an \emph{elliptic function}
as a meromorphic function $g$ on $\mathbb C^\ast$ such that $g(px)=g(x)$ for all $x$, where the \emph{period} $p$ satisfies $0<|p|<1$.  
We will distinguish the
two definitions by using the terms \emph{additively elliptic} and \emph{multiplicatively elliptic}, respectively.

One can also give a coordinate-free definition of an elliptic function 
as an analytic function from a complex torus (compact Riemann surface of genus one)
to a complex sphere  (compact Riemann surface of genus zero). Our two definitions then correspond to two distinct
choices of a complex coordinate on the torus. For the additive definition, we realize the torus as a parallelogram with opposite edges
identified, for the multiplicative definition as an annulus with the inner and outer boundary circles identified.

\begin{problem}
What can you say about functions satisfying \eqref{dp} when $\tau/\eta\in\mathbb R$?
\end{problem}
\begin{problem}
What can you say about meromorphic functions with \emph{three} additive periods?
\end{problem}

\section{Theta functions}

We want to think of elliptic functions as analogues of rational functions, which can be factored as
\begin{equation}\label{rf}f(z)=C\frac{(z-a_1)\dotsm (z-a_m)}{(z-b_1)\dotsm(z-b_n)}.\end{equation}
We will see that there is an analogous result for elliptic functions, where the building blocks (analogues of  first degree polynomials) 
are known as \emph{theta functions}.

How can we find an elliptic analogue of \eqref{rf}? We expect that the individual  factors on the right should correspond to zeroes and poles of
$f$. In particular,  the analogue  of the building block $z-0$ should vanish at $z=0$. If we want to construct a solution to \eqref{ap}, it is natural
to assume that it vanishes at the whole lattice $ \mathbb Z+\tau\mathbb Z$. In multiplicative language (that is, writing $x=e^{2\ti\pi z}$, $p=e^{2\ti\pi\tau}$),
we are looking for a function vanishing for $x\in p^{\mathbb Z}$. A naive way to construct such a function would be as an infinite product
$$\dotsm(x-p^{-2})(x-p^{-1})(x-1)(x-p)(x-p^2)\dotsm=\prod_{k=-\infty}^\infty (x-p^k). $$
However, this product diverges. For convergence, the factors should  tend to $1$ as $k\rightarrow\pm \infty$, but in fact they tend to $x$ as $k\rightarrow \infty$
and behave as $-p^k$ when $k\rightarrow -\infty$. It is therefore natural to normalize the product by dividing factors with large $k$ by $x$ and factors  with large negative $k$
by $-p^k$. The details of how this is done are not important; we will  make these substitutions for $k>0$ and $k\leq 0$, respectively, 
 denoting the resulting function  $\theta(x;p)$. That is,
$$\theta(x;p)=\prod_{k=-\infty}^0\left(1-\frac{x}{p^k}\right)\prod_{k=1}^\infty \left(1-\frac{p^k}{x}\right)=\prod_{k=0}^\infty(1-xp^k) \left(1-\frac{p^{k+1}}{x}\right).$$
  Equivalently, in the standard notation
  $$(a;p)_\infty=\prod_{k=0}^\infty(1-ap^k),\qquad (a_1,\dots,a_m;p)_\infty=(a_1;p)_\infty\dotsm (a_m;p)_\infty, $$
we have\footnote{The reader who is more familiar with the four classical Jacobi theta functions should have a look at Exercise \ref{jcte}.}
$$\theta(x;p)=(x,p/x;p)_\infty. $$

It will be convenient to use the shorthand notation
$$\theta(a_1,\dots,a_m;p)=\theta(a_1;p)\dotsm\theta(a_m;p)$$
as well as
\begin{equation}\label{tpm}\theta(ax^\pm;p)=\theta(ax;p)\theta(a/x;p).\end{equation}
Note that the \emph{trigonometric limit} $\tau\rightarrow\ti\infty$ corresponds to $p\rightarrow 0$. Then, our theta function reduces to the first degree polynomial $\theta(x;0)=1-x$.

As all readers may not be so comfortable with infinite products, we give a direct proof of the following fact.

\begin{lemma}\label{qpl}
For $|p|<1$, $(x;p)_\infty$ is an entire function of $x$ with zeroes precisely at $x\in p^{\mathbb Z_{\leq 0}}$.
\end{lemma}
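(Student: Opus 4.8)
The plan is to realize $(x;p)_\infty$ as a locally uniform limit of its partial products and then read off the zeros directly, avoiding any black-box theorems about infinite products. Write $P_n(x)=\prod_{k=0}^{n-1}(1-xp^k)$, which is a polynomial and hence entire, fix a radius $R>0$, and work throughout on the closed disk $|x|\le R$; since every compact set lies in such a disk, uniform control there will give what is needed on all of $\mathbb C$.

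First I would establish convergence. On the disk one has the crude bound $|P_n(x)|\le\prod_{k=0}^{n-1}(1+R|p|^k)\le\prod_{k=0}^\infty(1+R|p|^k)=:M_R$, and $M_R<\infty$ because $\log M_R=\sum_{k\ge 0}\log(1+R|p|^k)\le R\sum_{k\ge 0}|p|^k<\infty$. Since $P_{n+1}(x)-P_n(x)=-xp^n P_n(x)$, this yields $|P_{n+1}(x)-P_n(x)|\le M_R R|p|^n$, so the telescoping series $\sum_n\big(P_{n+1}-P_n\big)$ converges uniformly on $|x|\le R$ by comparison with the geometric series $\sum_n|p|^n$. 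Hence $P_n\to(x;p)_\infty$ uniformly on the disk, the convergence is locally uniform on $\mathbb C$, and a locally uniform limit of entire functions is entire.

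Next, the zeros at $x\in p^{\mathbb Z_{\leq 0}}$. For $x=p^{-k}$ with $k\ge 0$ the factor $1-xp^k$ vanishes, so $P_n(p^{-k})=0$ for every $n>k$, and therefore $(p^{-k};p)_\infty=\lim_{n\to\infty}P_n(p^{-k})=0$.

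The remaining, and main, point is that there are no other zeros. Given $|x|\le R$, choose $N$ so large that $R|p|^k\le\tfrac12$ for all $k\ge N$; then each factor satisfies $|1-xp^k|\ge\tfrac12$, and, using $|\log(1-w)|\le 2|w|$ for $|w|\le\tfrac12$, the tail $T(x)=\prod_{k=N}^\infty(1-xp^k)$ satisfies $|T(x)|\ge\exp\!\big(-2R\sum_{k\ge N}|p|^k\big)>0$. Since $(x;p)_\infty=\big(\prod_{k=0}^{N-1}(1-xp^k)\big)T(x)$, any zero inside the disk must come from the explicit polynomial factor, whose only zeros are $x=p^{-k}$ with $0\le k\le N-1$; letting $R\to\infty$ shows every zero lies in $p^{\mathbb Z_{\leq 0}}$. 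I expect the one delicate estimate to be this lower bound on the tail $T(x)$: it is precisely what guarantees that passing to the infinite product creates no spurious zeros, whereas everything else reduces to routine comparison with a geometric series.
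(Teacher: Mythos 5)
Your proof is correct and follows essentially the same route as the paper: both factor $(x;p)_\infty$ into the finite polynomial $\prod_{k=0}^{N-1}(1-xp^k)$ times a tail that is exhibited as the exponential of an absolutely convergent series of logarithms, hence non-vanishing, so that all zeros come from the polynomial factor. The paper realizes the tail's exponent explicitly by expanding $\log\frac{1}{1-x}$ and interchanging the order of summation, whereas you use the bound $|\log(1-w)|\le 2|w|$ together with a telescoping argument for locally uniform convergence; these are presentational differences only.
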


\begin{proof}
We start from the Taylor expansion
$$\log \frac 1{1-x}=\sum_{n=1}^\infty\frac{x^n}n,\qquad |x|<1, $$
which gives
$$1-x=\exp\left(-\sum_{n=1}^\infty\frac{x^n}n\right),\qquad |x|<1. $$
Fixing $x$, pick $N$ so that $|xp^{N+1}|<1$. We can then write
$$(x;p)_\infty=\prod_{j=0}^N(1-xp^j)\exp\left(-\sum_{j=N+1}^\infty\sum_{n=1}^\infty\frac{(xp^j)^n}n\right). $$
As the double series converges absolutely we may change the order of summation and obtain
$$(x;p)_\infty=\prod_{j=0}^N(1-xp^j)\exp\left(-\sum_{n=1}^\infty\frac{x^np^{(N+1)n}}{n(1-p^n)}\right). $$
The stated properties are then obvious.
\end{proof}

We have the following immediate consequence. 

\begin{corollary}\label{tzc}
The theta function $\theta(x;p)$ is analytic for $x\neq 0$ and has zeroes precisely at $x\in p^{\mathbb Z}$. 
\end{corollary}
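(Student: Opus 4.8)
The plan is to read the statement off directly from the factorization $\theta(x;p)=(x;p)_\infty\,(p/x;p)_\infty$, handling the two factors separately and then combining them. The first factor is already settled by Lemma~\ref{qpl}: it is entire in $x$ and vanishes precisely when $x\in p^{\mathbb Z_{\leq 0}}$, that is, at $x=p^{-k}$ for $k\geq 0$.

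For the second factor, I would view $(p/x;p)_\infty$ as the composition $g(p/x)$, where $g(y)=(y;p)_\infty$ is the entire function supplied by the lemma and $y=p/x$ is analytic on $\mathbb C^\ast$. Since a composition of analytic functions is analytic, $(p/x;p)_\infty$ is analytic for $x\neq 0$; it is exactly here that the restriction $x\neq 0$ enters, as $p/x$ blows up at the origin. Its zeroes occur when $p/x\in p^{\mathbb Z_{\leq 0}}$, i.e.\ when $p/x=p^{-k}$ with $k\geq 0$, which rearranges to $x=p^{k+1}$; thus this factor vanishes precisely on $p^{\mathbb Z_{\geq 1}}$.

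It then remains to combine the two. As a product of two functions analytic on $\mathbb C^\ast$, $\theta(x;p)$ is analytic there, and its zero set is the union of the two zero sets, $p^{\mathbb Z_{\leq 0}}\cup p^{\mathbb Z_{\geq 1}}=p^{\mathbb Z}$. The one point deserving a word of care is that the two factors contribute distinct zeroes with no cancellation: since $0<|p|<1$, the moduli $|p|^k$ are strictly monotone in $k$, so the powers $p^k$ are pairwise distinct and the index ranges $k\leq 0$ and $k\geq 1$ partition $\mathbb Z$ without overlap. I expect no genuine obstacle here — all the substance lives in Lemma~\ref{qpl}, and what is left is the elementary composition argument for analyticity together with the bookkeeping of the two zero sets.
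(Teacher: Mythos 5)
Your proposal is correct and is precisely the argument the paper intends: the corollary is stated there as an immediate consequence of Lemma~\ref{qpl} via the factorization $\theta(x;p)=(x;p)_\infty(p/x;p)_\infty$, applying the lemma to each factor (with the substitution $x\mapsto p/x$ for the second) and taking the union of the zero sets. Your extra remarks on analyticity at $x\neq 0$ and the disjointness of the two index ranges are exactly the bookkeeping the paper leaves implicit.
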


Note that the theta function is \emph{not} elliptic. In fact,
$$\frac{\theta(px;p)}{\theta(x;p)}=\frac{(px,1/x;p)_\infty}{(x,p/x;p)_\infty}=\frac{1-1/x}{1-x}=-\frac 1{x}. $$
This relation,  
\begin{equation}\label{tqp}\theta(px;p)=-x^{-1}\theta(x;p),\end{equation}
 is called  \emph{quasi-periodicity} of the theta function.
 More generally,
\begin{equation}\label{tqpi} \theta(p^kx;p)=(-1)^kp^{-\binom k2}x^{-k}\theta(x;p),\qquad k\in\mathbb Z.\end{equation}
Another useful identity is
$$\theta(1/x;p)=-x^{-1}\theta(x;p).$$

\begin{problem}
Prove that $\prod_{n=2}^N(1-1/n)\rightarrow 0$ as $N\rightarrow\infty$. (This shows that one has to be a little bit careful when showing that $\theta(x;p)\neq 0$ for $x\notin p^{\mathbb Z}$; it does not just follow from the fact that the factors are non-zero and tend to $1$.)
\end{problem}
\begin{problem}
Prove \eqref{tqpi}.
\end{problem}
\begin{problem}\label{tde}
Show that $\theta(x^2;p^2)=\theta(x,-x;p)$ and that $\theta(x;p)=\theta(x,px;p^2)$. Deduce the duplication formula
\begin{equation}\label{tdf}\theta(x^2;p)=\theta(x,-x,\sqrt px,-\sqrt px;p).\end{equation}
\end{problem}
\begin{problem} 
Show that 
\begin{equation}\label{tte}\theta(-1,\sqrt p,-\sqrt p;p)=2,\end{equation}
 first using \eqref{tdf} and then by direct manipulation of infinite products.
\end{problem}

\begin{problem}\label{jte}
Show that $\theta(x;p)$ has the Laurent expansion\footnote{This is known as Jacobi's triple product identity. A neat way to compute the prefactor is to compare the cases  $x=\ti \sqrt p$ and $x=\sqrt p$, see \cite[\S 10.4]{aar}.}
$$\theta(x;p)=\frac{1}{(p;p)_\infty}\sum_{n=-\infty}^\infty(-1)^np^{\binom n 2} x^n. $$
\end{problem}

\section{Factorization of elliptic functions}

We will now show that elliptic functions can be factored in terms of theta functions.
We first recall the following elementary fact, which is easily proved by expanding $f$ as a Laurent series.

\begin{lemma}\label{ll}
If $f$ is analytic on $\mathbb C^\ast$ and $f(px)=Cf(x)$ for some  $C\in\mathbb C$ and $|p|<1$, then $f(x)=Dx^N$ for some $D\in\mathbb C$ and $N\in\mathbb Z$.
\end{lemma}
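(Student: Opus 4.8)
The plan is to follow the hint and exploit the fact that an analytic function on the punctured plane $\mathbb C^\ast$ admits a Laurent expansion converging on the full annulus. First I would write
$$f(x)=\sum_{n=-\infty}^\infty a_nx^n,$$
a series that converges for all $x$ with $0<|x|<\infty$, i.e.\ on all of $\mathbb C^\ast$. Substituting into the functional equation $f(px)=Cf(x)$, the left-hand side becomes $\sum_n a_np^n x^n$. Using uniqueness of Laurent coefficients for a function analytic on an annulus, I would then compare coefficients degree by degree, obtaining the relations
$$a_np^n=Ca_n,\qquad n\in\mathbb Z,$$
equivalently $a_n(p^n-C)=0$ for every $n$.

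The next step is to decide which coefficients can survive. For each $n$, either $a_n=0$ or $C=p^n$. The key observation is that, since $0<|p|<1$ (note $p\neq0$, as otherwise $f(px)$ would be evaluated off $\mathbb C^\ast$), the numbers $p^n$ are pairwise distinct, because their moduli $|p|^n$ are strictly monotone in $n$. Hence $C=p^n$ has at most one integer solution. Calling it $N$ when it exists, I conclude that $a_n=0$ for all $n\neq N$, so $f(x)=a_Nx^N$ and I may take $D=a_N$. If $C$ is not an integer power of $p$ at all (in particular if $C=0$), then every coefficient vanishes and $f\equiv0$, which is of the asserted form with $D=0$ and any choice of $N$.

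This argument is essentially self-contained, so there is no serious obstacle; the points requiring care are standard analytic facts rather than new estimates. Specifically, I would want to invoke cleanly (i) the existence and global convergence of the Laurent series on the full annulus $\mathbb C^\ast$, and (ii) the uniqueness of Laurent coefficients, which is precisely what legitimizes equating coefficients term by term. The structural heart of the lemma is the injectivity of $n\mapsto p^n$, and it is worth emphasizing that this is exactly where the hypothesis $|p|<1$ (rather than $|p|=1$) is used: were $p$ a root of unity, several coefficients could persist and $f$ would not in general reduce to a single monomial.
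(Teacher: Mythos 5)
Your proof is correct and is exactly the argument the paper has in mind: the paper states the lemma is "easily proved by expanding $f$ as a Laurent series," and your coefficient comparison $a_n p^n = C a_n$, together with the observation that $0<|p|<1$ makes the powers $p^n$ pairwise distinct, is the standard way to complete that sketch. Nothing further is needed.
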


The following result will be useful.

\begin{lemma}\label{zpl}
Let $f$ be multiplicatively elliptic with period $p$. Then, $f$ has as many   poles as zeroes, counted with multiplicity, in  each period annulus $A=\left\{x;\,pr\leq |x|<r\right\}$. 
\end{lemma}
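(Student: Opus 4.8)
The plan is to apply the argument principle to the period annulus $A = \{x;\, pr \leq |x| < r\}$, whose boundary consists of two circles $|x| = r$ and $|x| = pr$, traversed in opposite senses. The number of zeroes minus poles of $f$ inside $A$ equals
\[
\frac{1}{2\pi\ti}\oint_{\partial A}\frac{f'(x)}{f(x)}\,dx
= \frac{1}{2\pi\ti}\left(\oint_{|x|=r}\frac{f'(x)}{f(x)}\,dx - \oint_{|x|=pr}\frac{f'(x)}{f(x)}\,dx\right),
\]
and I want to show this vanishes. The key point is that the two boundary integrals should cancel by quasi-periodicity; in this elliptic setting there is no quasi-periodic prefactor at all, since $f(px) = f(x)$ outright.

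First I would choose $r$ so that $f$ has no zeroes or poles on either boundary circle $|x| = r$ or $|x| = pr$; this is possible since zeroes and poles are isolated, so only finitely many lie in any fixed annulus and we may perturb $r$ slightly. Next I would substitute $x \mapsto px$ in the inner integral over $|x| = pr$: as $x$ ranges over $|x| = r$, the point $px$ ranges over $|x| = pr$. Since $f(px) = f(x)$, differentiating gives $p\,f'(px) = f'(x)$, so the logarithmic derivative transforms as $\frac{f'(px)}{f(px)}\,d(px) = \frac{f'(x)}{f(x)}\,dx$. Hence the integral over $|x| = pr$ equals exactly the integral over $|x| = r$, and the difference is zero. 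Therefore the number of zeroes equals the number of poles, counted with multiplicity.

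I would take care over orientations: the boundary $\partial A$ of the annulus is the outer circle counterclockwise together with the inner circle clockwise, which accounts for the minus sign in the displayed difference, and this is precisely what makes the cancellation (rather than a doubling) occur. The \emph{main obstacle}, though entirely routine, is the bookkeeping around this change of variables and orientation — making sure the substitution $x \mapsto px$ maps the outer circle to the inner one with matching orientation so that the two contributions cancel cleanly. One should also note at the outset that $f$ has only finitely many zeroes and poles in $A$: a meromorphic function on $\mathbb{C}^\ast$ could a priori accumulate singularities near $0$, but the periodicity $f(px) = f(x)$ forces the behaviour in $A$ to be representative, and $A$ together with its closure is a compact subset of $\mathbb{C}^\ast$ on which a meromorphic function has only finitely many zeroes and poles.
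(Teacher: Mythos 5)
Your proof is correct and follows essentially the same route as the paper: the argument principle on the period annulus, with the two boundary integrals cancelling after the substitution $x\mapsto px$ (using $pf'(px)=f'(x)$) and the orientation bookkeeping. The extra remarks on perturbing $r$ and finiteness of zeroes and poles in the closed annulus are fine and consistent with the paper's brief treatment.
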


\begin{proof}
We can assume that there are no zeroes or poles at $\partial A$; otherwise we just vary $r$ slightly. By the argument principle, if $N$ is the number of zeroes and $P$ the number of poles inside  $A$, then
$$N-P=\int_{\partial A}\frac{f'(x)}{f(x)}\frac{dx}{2\pi\ti }. $$
Here, the inner boundary circle should be oriented clockwise and the outer circle counter-clockwise.
To compare the two components we change $x$ to $px$, as well as the orientation, at the inner boundary. 
Since $f(px)=f(x)$ gives $pf'(px)=f'(x)$, it follows that
$$N-P=\int_{|x|=r} \left(\frac{f'(x)}{f(x)}-\frac{pf'(px)}{f(px)}\right)\frac{dx}{2\pi\ti }=0.$$
\end{proof}

We can now obtain the following fundamental result.

\begin{theorem}\label{eftt}
Any multiplicatively elliptic function $f$ with period $p$  can be factored as
\begin{subequations}\label{eft}
\begin{equation}f(x)=C\frac{\theta(x/a_1,\dots,x/a_n;p)}{\theta(x/b_1,\dots,x/b_n;p)}, \end{equation}
where $C\in \mathbb C$  and $a_j,b_j\in\mathbb C^\ast$ are subject to the condition
\begin{equation}a_1\dotsm a_n=b_1\dotsm b_n. \end{equation}
\end{subequations}
\end{theorem}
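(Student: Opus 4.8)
\textbf{The plan} is to build the right-hand side directly from the divisor of $f$ and then show that the quotient with $f$ is trivial by invoking Lemma \ref{ll}. First I would observe that a period annulus $A=\{x;\,pr\le|x|<r\}$ has compact closure in $\mathbb C^\ast$, so the meromorphic function $f$ has only finitely many zeroes and poles in $A$ (choosing $r$ so that none lie on $\partial A$). By Lemma \ref{zpl} these occur in equal numbers; call that number $n$ and list the zeroes as $a_1,\dots,a_n$ and the poles as $b_1,\dots,b_n$, each repeated according to its multiplicity. Since $f$ is $p$-periodic, its full zero set is $\bigcup_j a_jp^{\mathbb Z}$ and its full pole set is $\bigcup_j b_jp^{\mathbb Z}$. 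By Corollary \ref{tzc} the candidate $g(x)=\theta(x/a_1,\dots,x/a_n;p)/\theta(x/b_1,\dots,x/b_n;p)$ has exactly these zeroes and poles with the same multiplicities, because $\theta(x/a;p)$ vanishes precisely on $ap^{\mathbb Z}$.

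Next I would compare $f$ and $g$. Applying the quasi-periodicity \eqref{tqp} to each factor gives $\theta(px/a_j;p)=-(a_j/x)\,\theta(x/a_j;p)$, so the common factors $(-1)^nx^{-n}$ cancel between numerator and denominator and I obtain $g(px)=\bigl(\prod_j a_j/\prod_j b_j\bigr)g(x)$. Hence $h:=f/g$ is analytic and nowhere vanishing on $\mathbb C^\ast$ (its would-be zeroes and poles cancel), and it satisfies $h(px)=\bigl(\prod_j b_j/\prod_j a_j\bigr)h(x)$. Lemma \ref{ll} then forces $h(x)=Dx^N$ for some $D\in\mathbb C^\ast$ and $N\in\mathbb Z$; comparing the multiplier $h(px)/h(x)=p^N$ with the one just computed yields $\prod_j a_j/\prod_j b_j=p^{-N}$.

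Finally I would absorb the monomial and enforce the balancing condition in one stroke by shifting a single representative. Replacing $a_1$ by $a_1'=a_1p^{N}$ leaves the zero set unchanged (since $p^Np^{\mathbb Z}=p^{\mathbb Z}$), and by \eqref{tqpi} it multiplies $\theta(x/a_1;p)$, hence $g$, by a constant times $x^{N}$; this exactly cancels the factor $x^N$ coming from $h$, so $f=C\,\theta(x/a_1',x/a_2,\dots,x/a_n;p)/\theta(x/b_1,\dots,x/b_n;p)$ for a constant $C$. For these parameters $a_1'a_2\dotsm a_n=p^N\prod_j a_j=b_1\dotsm b_n$, which is the asserted condition. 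The one genuinely substantive point, and the place I would be most careful, is the emergence of the balancing relation $a_1\dotsm a_n=b_1\dotsm b_n$: a priori the naive quotient $g$ is only quasi-periodic, and the classical content here is that the product of the zeroes equals the product of the poles modulo $p^{\mathbb Z}$. The pleasant feature of this route is that one need not prove an Abel-type theorem by contour integration; Lemma \ref{ll} delivers the relation for free, since it pins $h$ down to a monomial whose degree $N$ records precisely the missing power of $p$.
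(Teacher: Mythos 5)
Your proposal is correct and follows essentially the same route as the paper: period annulus plus Lemma \ref{zpl} to pair off zeroes and poles, Corollary \ref{tzc} and the quasi-periodicity \eqref{tqp} to see that $h=f\,\theta(x/b_1,\dots,x/b_n;p)/\theta(x/a_1,\dots,x/a_n;p)$ is analytic with $h(px)=(b_1\dotsm b_n/a_1\dotsm a_n)h(x)$, Lemma \ref{ll} to pin $h$ down to $Dx^N$, and finally a shift of $a_1$ by a power of $p$ via \eqref{tqpi} to absorb $x^N$ and produce the balancing condition. Your explicit check that the shift $a_1\mapsto p^N a_1$ simultaneously kills the monomial and enforces $a_1\dotsm a_n=b_1\dotsm b_n$ is a welcome bit of care (the paper states the shift with the opposite sign of $N$, a matter of naming conventions), but the argument is the paper's own.
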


\begin{proof}
Pick a period annulus $A$ such that $f$ has no zeroes or poles at the boundary.  
By Lemma \ref{zpl}, there are as many zeroes as poles inside $A$ (counted with multiplicity); denote them $a_1,\dots,a_n$ and $b_1,\dots,b_n$, respectively. 
All zeroes and poles  are then of the form $p^{\mathbb Z}a_j$ and   $p^{\mathbb Z}b_j$ .
Thus, by Corollary \ref{tzc},
$$g(x)=f(x)\frac{\theta(x/b_1,\dots,x/b_n;p)}{\theta(x/a_1,\dots,x/a_n;p)} $$
is analytic for $x\neq 0$. Using \eqref{tqp} it follows that $g(px)=Dg(x)$, where $D=b_1\dotsm b_n/a_1\dotsm a_n$. 
By Lemma \ref{ll}, $g(x)=C x^N$ and $D=p^N$ for some  $C\in\mathbb C$ and $N\in\mathbb Z$. We have now proved that
$$f(x)=Cx^N \frac{\theta(x/a_1,\dots,x/a_n ;p)}{\theta(x/b_1,\dots,x/b_n;p)}, \qquad p^Na_1\dotsm a_n=b_1\dotsm b_n.$$
Replacing $a_1$ by $p^{-N}a_1$ and using \eqref{tqpi} we arrive at \eqref{eft}.
\end{proof}

Note   that the  limit $p\rightarrow 0$ of \eqref{eft} does not give all rational function but only those of the form
$$f(x)=C\frac{(x-a_1)\dotsm(x-a_n)}{(x-b_1)\dotsm(x-b_n)},\qquad a_1\dotsm a_n=b_1\dotsm b_n, $$
that is, rational functions such that $f(0)=f(\infty)$. From this perspective, it is natural to consider any function of the form
\begin{equation}\label{etk}C\frac{\theta(x/a_1,\dots,x/a_m;p)}{\theta(x/b_1,\dots,x/b_n;p)}\end{equation}
as ``kind of elliptic''. Indeed, such functions are sometimes called \emph{elliptic functions of the third kind}. The special case $m=n$ is then called \emph{elliptic functions of the second kind} and the true elliptic functions, satisfying in addition 
$a_1\dotsm a_n=b_1\dotsm b_n$, are \emph{elliptic functions of the first kind}. Moreover, the special case $n=0$ (corresponding to polynomials in the trigonometric limit) is referred to as \emph{higher order theta functions}, or simply \emph{theta functions}. 

We state an extension of Theorem \ref{eftt} to elliptic functions of the third kind, but leave the proof to the reader.

\begin{theorem}\label{etkt}
Let $f$ be a meromorphic function on $\mathbb C^\ast$ satisfying the equation $f(px)=t x^{-k}f(x)$, where $k\in\mathbb Z$, $t\in\mathbb C^\ast$ and $0<|p|<1$. Then, $f(x)$ can be factored as in \eqref{etk}, where $m=n+k$ and $(-1)^ka_1\dotsm a_m/b_1\dotsm b_n=t$.
\end{theorem}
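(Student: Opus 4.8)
The plan is to follow the proof of Theorem \ref{eftt} almost verbatim, the only genuinely new ingredient being a modified count of zeroes versus poles. First I would revisit the argument-principle computation of Lemma \ref{zpl}. Since $f$ now satisfies $f(px)=tx^{-k}f(x)$, logarithmic differentiation gives $pf'(px)/f(px)=f'(x)/f(x)-k/x$, so that if $N$ and $P$ denote the number of zeroes and poles of $f$ (with multiplicity) in a period annulus $A=\{x;\,pr\le|x|<r\}$ chosen to avoid zeroes and poles on $\partial A$, then
$$N-P=\int_{|x|=r}\left(\frac{f'(x)}{f(x)}-\frac{pf'(px)}{f(px)}\right)\frac{dx}{2\pi\ti}=\int_{|x|=r}\frac kx\,\frac{dx}{2\pi\ti}=k.$$
Thus $f$ has exactly $k$ more zeroes than poles in each period annulus.

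Next I would observe that the functional equation forces both the zero set and the pole set of $f$ to be invariant under $x\mapsto px$: if $f(x_0)=0$ then $f(px_0)=tx_0^{-k}f(x_0)=0$, and conversely $f(x_0/p)=0$. Hence, labelling the zeroes in $A$ as $a_1,\dots,a_m$ and the poles as $b_1,\dots,b_n$ (so that $m=n+k$ by the previous step), every zero and pole of $f$ lies in some $p^{\mathbb Z}a_j$ or $p^{\mathbb Z}b_j$. By Corollary \ref{tzc}, the function
$$g(x)=f(x)\,\frac{\theta(x/b_1,\dots,x/b_n;p)}{\theta(x/a_1,\dots,x/a_m;p)}$$
is therefore analytic and non-vanishing on $\mathbb C^\ast$. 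Applying the quasi-periodicity \eqref{tqp} to each theta factor, together with $f(px)=tx^{-k}f(x)$ and $m-n=k$, a short computation gives $g(px)=Dg(x)$ with $D=t(-1)^k b_1\dotsm b_n/(a_1\dotsm a_m)$; the powers of $x$ cancel precisely because $m-n=k$.

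Lemma \ref{ll} then yields $g(x)=Cx^N$ and $D=p^N$ for some $C\in\mathbb C^\ast$ and $N\in\mathbb Z$, so that
$$f(x)=Cx^N\,\frac{\theta(x/a_1,\dots,x/a_m;p)}{\theta(x/b_1,\dots,x/b_n;p)}.$$
Finally I would remove the factor $x^N$ exactly as in Theorem \ref{eftt}: replacing $a_1$ by $p^Na_1$ and invoking \eqref{tqpi} absorbs $x^N$ into the constant, while substituting $p^N=t(-1)^kb_1\dotsm b_n/(a_1\dotsm a_m)$ turns the relation $D=p^N$ into the asserted normalization $(-1)^ka_1\dotsm a_m/(b_1\dotsm b_n)=t$. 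The one step requiring care is the sign bookkeeping in the computation of $g(px)$: one must track the factors $(-1)^n$, $(-1)^m$ and the powers $x^{-n}$, $x^{-m}$ produced by \eqref{tqp} and verify that, via $m-n=k$, they collapse into the single sign $(-1)^k$ together with a clean cancellation of all powers of $x$. Everything else is a direct transcription of the earlier argument.
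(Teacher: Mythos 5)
Your proof is correct and complete: the modified argument-principle count giving $N-P=k$, the invariance of the zero and pole sets under $x\mapsto px$, the quotient $g$, Lemma \ref{ll}, and the final absorption of $x^N$ all check out, including the sign bookkeeping that yields $D=t(-1)^k b_1\dotsm b_n/(a_1\dotsm a_m)$. The paper itself leaves this theorem as an exercise, and your argument is exactly the intended one, namely the line-by-line adaptation of the paper's proof of Theorem \ref{eftt} with the zero/pole count of Lemma \ref{zpl} replaced by $N-P=k$.
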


The special case of higher order theta functions is as follows.

\begin{corollary}\label{tfc}
Let $f$ be an analytic function on $\mathbb C^\ast$ satisfying the equation $f(px)=t x^{-k}f(x)$, where $k\in\mathbb Z$, $t\in\mathbb C^\ast$ and $0<|p|<1$. Then, $k\geq 0$ and
$$f(x)=C\theta(x/a_1,\dots,x/a_k;p), \qquad (-1)^ka_1\dotsm a_k=t.$$
\end{corollary}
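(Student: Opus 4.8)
The plan is to imitate the proof of Theorem~\ref{eftt} almost verbatim, the one genuinely new ingredient being a quasi-periodic version of the zero–pole count of Lemma~\ref{zpl}. Throughout I assume $f\not\equiv0$, the zero function being a degenerate case with no canonical $k$. The first goal is to determine how many zeros $f$ has in a period annulus; this count will simultaneously force $k\ge0$.

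First I would rerun the contour computation of Lemma~\ref{zpl} with $f(px)=tx^{-k}f(x)$ in place of $f(px)=f(x)$. Taking the logarithmic derivative of the functional equation gives $pf'(px)/f(px)=f'(x)/f(x)-k/x$, so that over a period annulus $A$ whose boundary carries no zeros or poles the argument principle yields
$$N-P=\int_{|x|=r}\left(\frac{f'(x)}{f(x)}-\frac{pf'(px)}{f(px)}\right)\frac{dx}{2\pi\ti}=\int_{|x|=r}\frac{k}{x}\,\frac{dx}{2\pi\ti}=k.$$
Since $f$ is analytic on $\mathbb C^\ast$ we have $P=0$, whence $k=N\ge0$ and $f$ has exactly $k$ zeros $a_1,\dots,a_k$ (with multiplicity) in $A$. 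This is the conceptual crux: the count changes from $0$ to $k$, and nonnegativity of a zero count is exactly what delivers $k\ge0$.

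Next I would form $g(x)=f(x)/\theta(x/a_1,\dots,x/a_k;p)$. The functional equation forces the zero set of $f$ to be invariant under $x\mapsto px$, so every zero lies in some $p^{\mathbb Z}a_j$; by Corollary~\ref{tzc} these are precisely the zeros of the denominator, matched with multiplicity, so $g$ is analytic and nowhere vanishing on $\mathbb C^\ast$. Applying quasi-periodicity \eqref{tqp} to each of the $k$ theta factors, the denominator acquires a factor $(-1)^k a_1\dotsm a_k\,x^{-k}$ under $x\mapsto px$, so that $g(px)/g(x)=t/\bigl((-1)^k a_1\dotsm a_k\bigr)$ is constant. Lemma~\ref{ll} then gives $g(x)=Dx^M$ with $D\in\mathbb C^\ast$ and $M\in\mathbb Z$.

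Finally I would clear the monomial exactly as at the end of the proof of Theorem~\ref{eftt}: replacing $a_1$ by $p^{M}a_1$ and invoking \eqref{tqpi} absorbs $x^{M}$ into a single theta factor, leaving $f(x)=C\,\theta(x/a_1,\dots,x/a_k;p)$ for a new constant $C$ and relabelled zeros. Comparing the two expressions for $f(px)/f(x)$—namely $tx^{-k}$ from the hypothesis and $(-1)^k a_1\dotsm a_k\,x^{-k}$ from quasi-periodicity—then reads off the normalization $(-1)^k a_1\dotsm a_k=t$. The only fiddly point is the sign and power-of-$p$ bookkeeping when absorbing $x^{M}$ via \eqref{tqpi}, but this is harmless: a monomial prefactor can always be hidden in one theta factor, and the product condition comes straight from the functional equation. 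I would also remark that the whole statement is just the $n=0$ instance of Theorem~\ref{etkt}, analyticity being exactly what forces the denominator to be absent and hence $k=m-n=m\ge0$.
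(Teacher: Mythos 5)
Your argument is correct, and it is in substance the proof the paper intends but never writes down: the corollary is stated as the special case $n=0$ of Theorem \ref{etkt}, whose proof is itself left as an exercise, so the paper contains no actual argument for this statement. Your fill-in is the natural one, namely rerunning the proof of Theorem \ref{eftt}, and your one new ingredient is exactly the right key step: inserting $f'(x)/f(x)-pf'(px)/f(px)=k/x$ into the contour integral of Lemma \ref{zpl} to get $N-P=k$, so that analyticity ($P=0$) yields both $k\geq 0$ and the count of zeros needed to form $g(x)=f(x)/\theta(x/a_1,\dots,x/a_k;p)$. The remaining steps (Corollary \ref{tzc} for multiplicity matching, Lemma \ref{ll}, absorbing $x^M$ via \eqref{tqpi}, and reading off $(-1)^ka_1\dotsm a_k=t$ from \eqref{tqp}) are all sound. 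In fact your direct route has an advantage over the paper's: deducing the corollary from Theorem \ref{etkt} is not purely formal, since the factorization there may contain cancelling numerator and denominator factors, so the claim that analyticity forces $n=0$ (your closing remark, and the paper's implicit argument) itself requires the kind of zero-counting you carry out.

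One caveat, which is a defect of the statement rather than of your proof: the absorption of the monomial requires $k\geq 1$, i.e.\ at least one theta factor to absorb it into. When $k=0$, Lemma \ref{ll} gives $f(x)=Dx^M$ with $t=p^M$, and for $M\neq 0$ this is not a constant, so the corollary as stated fails for $f(x)=x$, $k=0$, $t=p$ (this same example shows one cannot take $n=0$ in Theorem \ref{etkt} for that $f$, so the paper's deduction has the identical blind spot). Your proof, like the paper's statement, is correct exactly when $k\geq 1$ or when $t\notin p^{\mathbb Z}\setminus\{1\}$; your explicit exclusion of $f\equiv 0$ is also necessary and is a point the paper glosses over.
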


\begin{problem}
Show that an elliptic function assumes each value (including $\infty$) an equal number of times in each period annulus $|pr|\leq |z|<r$.
\end{problem}

\begin{problem}
Prove  Theorem \ref{etkt}. 
\end{problem}

\section{The three-term identity}\label{ttrs}

From the viewpoint of elliptic hypergeometric series, the most fundamental result on elliptic functions is a certain three-term
relation for theta functions due to Weierstrass.\footnote{It has also been attributed to Riemann, but that seems incorrect \cite{k}.}
To motivate this relation, consider the space $V$ of analytic functions on $\mathbb C^\ast$ satisfying $f(px)=f(x)/px^2$. 
By Corollary \ref{tfc}, it consists of functions of the form $C\theta(xa,px/a;p)=C\theta(ax^\pm;p)$ (recall the notation \eqref{tpm}). 
As it is described by two parameters, we expect that $\dim V=2$, so $V$ should have a basis of the form $\theta(bx^\pm;p)$
and $\theta(cx^\pm;p)$. Thus, we should be able to write
\begin{equation}\label{we}\theta(ax^\pm;p)=B\theta(bx^\pm;p)+C\theta(cx^\pm;p). \end{equation}
If we put $x=c$ we get $B=\theta(ac^\pm;p)/\theta(bc^\pm;p)$, provided that the denominator is non-zero.
Similarly, $C=\theta(ab^\pm;p)/\theta(cb^\pm;p)$. Clearing the denominator, we are led to the identity
\begin{equation}\label{ttr}\theta(ax^\pm,bc^\pm;p)=\theta(bx^\pm,ac^\pm;p)+\frac ac\,\theta(cx^\pm,ba^\pm;p). \end{equation}

Although it is not hard to make the above argument rigorous, let us give an independent proof of \eqref{ttr} from scratch.
Let $f(x)$ denote the difference of the left-hand and right-hand side in \eqref{ttr}. We may assume that all parameters are generic.
It is clear that $f(c)=f(c^{-1})=0$. Since $f(px)=f(x)/px^2$, $f$ vanishes at $c^{\pm}p^{\mathbb Z}$. It follows that
$g(x)=f(x)/\theta(cx,c/x;p)$ is analytic for $x\neq 0$. Moreover, $g(px)=g(x)$. By Liouville's theorem\footnote{In complex analysis you have probably learned that Liouville's theorem says that 
entire bounded functions are constant. What Liouville in fact proved was the weaker statement that entire elliptic functions are constant (this follows from Theorem \ref{eftt}). The generalization to bounded functions is due to Cauchy.}, a non-constant elliptic function must have poles, so $g$ is a constant. But since we also have $f(b)=0$, that constant must be zero. Hence, $f$ is identically zero.

\begin{problem}
Deduce from \eqref{ttr} that  the functions  $\theta(bx^\pm;p)$
and $\theta(cx^\pm;p)$ form a basis for the space $V$ if and only if $bc,\ b/c\notin p^{\mathbb Z}$; in  particular, $\dim V=2$.
\end{problem}

\begin{problem}
Prove that the ``elliptic number'' $[z]=e^{-\ti\pi z}\theta(e^{2\pi\ti z};e^{2\pi\ti\tau})$ satisfies
\begin{equation}\label{na}[z+a][z-a][b+c][b-c]=[z+b][z-b][a+c][a-c]+[z+c][z-c][b+a][b-a]. \end{equation}
Deduce as limit cases that the same identity holds for the ``trigonometric number'' $[z]=\sin(z)$ and the ``rational number'' $[z]=z$.\footnote{Any entire function satisfying \eqref{na} is of one of these three forms, up to the transformations $[z]\mapsto ae^{bz^2}[cz]$, see \cite[Ex.\ 20.38]{ww}.}
\end{problem}

\begin{problem}\label{tti}
Show that the trigonometric and rational numbers in the previous exercise satisfy
\begin{equation}\label{ta}[b+c][b-c]=[a+c][a-c]+[b+a][b-a],\end{equation}
but that this is \emph{not} true for the elliptic numbers (one way to see this is to shift one of the variables by $\tau$).
\end{problem}

\section{Even elliptic functions}\label{ees}

It will be useful to understand the structure of even additively elliptic functions.
Note that if $f(z)=g(e^{2\ti\pi z})$ is such a function, then the corresponding multiplicatively elliptic function $g$ satisfies 
 $g(1/x)=g(x)$. By slight abuse of terminology, we will use the word \emph{even} also for the latter type of symmetry.

\begin{lemma}\label{eml}
Let  $g$ be an even multiplicatively elliptic function, that is, $g$ is meromorphic  on $\mathbb C^\ast$  and satisfies
\begin{equation}\label{gpi}g(px)=g(1/x)=g(x).\end{equation}
Then, if $a^2\in p^{\mathbb Z}$, the multiplicity of $a$ as a zero or pole  of $g$ is even.
\end{lemma}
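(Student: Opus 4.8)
The plan is to exploit a single involution of $\mathbb C^\ast$ that fixes the point $a$ and under which $g$ is invariant, and then to read off the parity of the multiplicity by passing to a coordinate in which that involution becomes $s\mapsto -s$.

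First I would introduce, for a fixed $a$ with $a^2\in p^{\mathbb Z}$, the map $\sigma(x)=a^2/x$. Writing $a^2=p^k$ with $k\in\mathbb Z$ and iterating $g(px)=g(x)$ (together with its consequence $g(p^{-1}x)=g(x)$) gives $g(p^k y)=g(y)$ for all $y$; taking $y=1/x$ and invoking $g(1/x)=g(x)$ yields
\[
g(\sigma(x))=g(a^2/x)=g(p^k/x)=g(1/x)=g(x).
\]
Thus $g$ is invariant under $\sigma$. Moreover $\sigma$ is an involution, $\sigma(\sigma(x))=x$, and $\sigma(a)=a^2/a=a$, so $a$ is one of its fixed points. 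This is the crucial observation: although $a$ need not be fixed by $x\mapsto 1/x$ alone, it \emph{is} fixed by the combined symmetry $\sigma$, and the hypothesis $a^2\in p^{\mathbb Z}$ is exactly what guarantees both that $a$ is fixed and that $g$ is $\sigma$-invariant.

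Next I would linearize $\sigma$ near $a$ by setting $x=ae^{s}$, which is a biholomorphism from a neighborhood of $s=0$ onto a neighborhood of $x=a$ (its derivative $ae^{s}$ is nonvanishing). Under this substitution $\sigma(x)=a^2/(ae^{s})=ae^{-s}$, so $h(s):=g(ae^{s})$ is meromorphic near $s=0$ and satisfies $h(-s)=h(s)$. Expanding $h(s)=\sum_{n\geq n_0}c_n s^n$ with $c_{n_0}\neq 0$, evenness forces $c_n=0$ for every odd $n$, so the order $n_0$ is even. Since a local biholomorphism preserves orders of zeros and poles, the multiplicity of $a$ as a zero or pole of $g$ equals $n_0$ and is therefore even.

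The argument is essentially complete at this stage; the only point requiring genuine thought is the identification of the right involution $\sigma(x)=a^2/x$, which simultaneously encodes the inversion symmetry and the period, and I do not anticipate any further obstacle. As an alternative one could instead factor $g$ via Theorem \ref{eftt} and argue that the symmetry $g(1/x)=g(x)$ pairs the zeros and poles as $\{a_j,1/a_j\}$ modulo $p^{\mathbb Z}$, so that a self-paired point (one with $a_j^2\in p^{\mathbb Z}$) must occur with even multiplicity; but the local computation above is shorter and sidesteps the bookkeeping inherent in the factorization.
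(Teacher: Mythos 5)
Your proof is correct and follows essentially the same route as the paper: the paper's proof also derives $g(ax)=g(a/x)$ (which is exactly your invariance under the involution $\sigma(x)=a^2/x$, fixing $a$) and then reads off that the local expansion at $a$ must have even order. Your linearization $x=ae^{s}$ is just a slightly more formal way of carrying out the paper's comparison $(ax-a)^j\sim(a/x-a)^j$ as $x\rightarrow 1$.
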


\begin{proof}
It follows from \eqref{gpi} that $g(ax)=g(1/ax)=g(a/x)$. Suppose the Laurent expansion of $g$ near $a$ starts as $C(x-a)^j$. Then,
$(ax-a)^j\sim (a/x-a)^j$ as $x\rightarrow 1$, which is only possible for $j$ even.
\end{proof}

We can now give a counterpart of Theorem \ref{eftt} for even elliptic functions.

 \begin{proposition}\label{efp}
Any even multiplicatively elliptic function $g$
 can be factored as
\begin{equation}\label{etf}g(x)=C\frac{\theta(c_1x^\pm,\dots,c_m x^\pm;p)}{\theta(d_1x^\pm,\dots,d_m x^\pm;p)},\end{equation}
where $C\in\mathbb C$ and $c_j,d_j\in\mathbb C^\ast$. 
\end{proposition}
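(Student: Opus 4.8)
The plan is to mimic the proof of Theorem~\ref{eftt}, but to bookkeep the zeroes and poles of $g$ in a way that respects the extra symmetry $g(1/x)=g(x)$, so that they assemble into the symmetric blocks $\theta(cx^\pm;p)$. First I would record two facts about the building block. By Corollary~\ref{tzc} and the notation \eqref{tpm}, the function $\theta(cx^\pm;p)=\theta(cx;p)\theta(c/x;p)$ is analytic on $\mathbb C^\ast$ with zeroes precisely at $x\in c^{\pm 1}p^{\mathbb Z}$; when $c^2\in p^{\mathbb Z}$ these two progressions coincide, so the zero there is double. A short computation with \eqref{tqp} (equivalently, the identification of the space $V$ preceding \eqref{we}) gives the quasi-periodicity $\theta\big(c(px)^\pm;p\big)=(px^2)^{-1}\theta(cx^\pm;p)$.

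Next I would analyse the divisor of $g$. As in the proof of Theorem~\ref{eftt}, every zero and pole of the elliptic function $g$ lies on a full progression $a\,p^{\mathbb Z}$. The relation $g(1/x)=g(x)$ shows that the multiset of these progressions is invariant under $a\,p^{\mathbb Z}\mapsto a^{-1}p^{\mathbb Z}$, with multiplicities preserved. I would therefore pair the zero progressions into blocks $\{c_j p^{\mathbb Z},c_j^{-1}p^{\mathbb Z}\}$ and the pole progressions into blocks $\{d_j p^{\mathbb Z},d_j^{-1}p^{\mathbb Z}\}$. A self-paired progression, that is one with $a^2\in p^{\mathbb Z}$, carries even multiplicity by Lemma~\ref{eml}, so it too decomposes into such blocks, each contributing a doubled point $c$ with $c^2\in p^{\mathbb Z}$ — exactly the double zero produced by $\theta(cx^\pm;p)$. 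Finally, Lemma~\ref{zpl} equates the total number of zeroes and poles, forcing the same number $m$ of blocks on each side.

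With the blocks in hand, I would set
$$h(x)=g(x)\,\frac{\theta(d_1x^\pm,\dots,d_mx^\pm;p)}{\theta(c_1x^\pm,\dots,c_mx^\pm;p)}.$$
By the first paragraph the denominator's zeroes cancel the zeroes of $g$ and the numerator's zeroes cancel its poles, exactly and with matching multiplicities, so $h$ is analytic and nowhere vanishing on $\mathbb C^\ast$. Moreover, since there are $m$ theta-blocks in both numerator and denominator, the quasi-periodicity factors $(px^2)^{-1}$ cancel, and together with $g(px)=g(x)$ this gives $h(px)=h(x)$. Thus $h$ is an analytic, zero-free, multiplicatively elliptic function, hence by Theorem~\ref{eftt} (the case $n=0$, i.e.\ the Liouville statement of the footnote in Section~\ref{ttrs}) it is a constant $C$. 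Rearranging yields \eqref{etf}, with no product condition on the $c_j,d_j$ because each symmetric block is individually balanced.

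The step requiring the most care is the pairing: the involution $x\mapsto 1/x$ does \emph{not} preserve a single period annulus, so I must phrase the bookkeeping in terms of the $p^{\mathbb Z}$-orbits (progressions) rather than representatives in a fixed annulus $A$. The genuinely delicate case is the self-dual orbits with $a^2\in p^{\mathbb Z}$, where I must know the multiplicity is even in order to absorb them into blocks $\theta(cx^\pm;p)$ whose forced double zero then matches $g$ on the nose; this is precisely the content of Lemma~\ref{eml}, and it is the only place where evenness is used beyond the symmetry of the divisor.
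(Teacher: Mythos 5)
Your proof is correct and follows essentially the same route as the paper: both arguments pair the zero and pole $p^{\mathbb Z}$-orbits of $g$ under $x\mapsto 1/x$ and invoke Lemma \ref{eml} to handle the self-paired orbits with $a^2\in p^{\mathbb Z}$. The only difference is one of assembly --- the paper regroups the factorization already provided by Theorem \ref{eftt} (and must then argue at the end that the stray factor $x^k$ is trivial), whereas you rerun that theorem's proof directly with the symmetric blocks $\theta(cx^\pm;p)$, whose balanced quasi-periodicity $(px^2)^{-1}$ makes the quotient genuinely elliptic, so the Liouville step finishes without any power-of-$x$ bookkeeping.
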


 \begin{proof}
 We first factor $g$ as in \eqref{eft}. Since 
  $g(1/a_1)=g(a_1)=0$, we must have $a_1a_j\in p^{\mathbb Z}$ for some $j$. 
  Consider first the case $j\neq 1$. Then, 
  $\theta(x/a_1,x/a_j;p)\sim \theta(a_1 x^\pm;p)$, where $\sim$ means equality up to a factor of the form
  $C x^k$. If $j=1$,  it follows from Lemma \ref{eml} that 
 the multiplicity of $a_1$ as a zero of $g$ is  even. 
  If the multiplicity is $2l$, this leads to a factor $\sim \theta(a_1x^\pm;p)^l$. The same argument applies to the poles of $g$, so in conclusion we
  find that
  $$g(x)=C x^k\frac{\theta(c_1x^\pm,\dots,c_m x^\pm;p)}{\theta(d_1x^\pm,\dots,d_m x^\pm;p)}$$
  for some integer $k$.  
It is easy to check that \eqref{gpi} holds if and only if $k=0$.
 \end{proof}

This factorization has the following important consequence.

\begin{proposition}\label{uec}
Let
\begin{equation}\label{xe}X(x)=\frac{\theta(ax^\pm;p)}{\theta(bx^\pm;p)}, \end{equation}
where    $ab,a/b\notin p^\mathbb Z$. Then, $X$ generates the field of even multiplicatively elliptic functions, that is,
any such function $g$ is of the form $g(x)=p(X(x))$, with $p$ a rational function.
\end{proposition}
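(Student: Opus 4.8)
The plan is to reduce everything, via Proposition \ref{efp}, to a single elementary factor $\theta(cx^\pm;p)$ and to express that factor through $X$ by means of the three-term identity \eqref{ttr}. First I would record that $X$ itself belongs to the field we are describing: it is manifestly invariant under $x\mapsto 1/x$, and a short computation using the quasi-periodicity \eqref{tqp} gives $\theta\big(a(px)^\pm;p\big)=(px^2)^{-1}\theta(ax^\pm;p)$, and likewise with $b$ in place of $a$, so the factors $(px^2)^{-1}$ cancel in the quotient and $X(px)=X(x)$. Hence $X$ is even multiplicatively elliptic, $p(X)$ is a well-defined even elliptic function for every rational $p$, and only surjectivity remains.

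The key computation is to rewrite \eqref{ttr}, using the shorthand, as
$$\theta(ax^\pm;p)\,\theta(bc^\pm;p)-\theta(bx^\pm;p)\,\theta(ac^\pm;p)=\frac ac\,\theta(cx^\pm;p)\,\theta(ba^\pm;p),$$
and then, for $c\notin b^\pm p^{\mathbb Z}$, to divide by $\theta(bx^\pm;p)\,\theta(bc^\pm;p)$ to obtain
$$X(x)-X(c)=\frac ac\,\frac{\theta(ba^\pm;p)}{\theta(bc^\pm;p)}\,\frac{\theta(cx^\pm;p)}{\theta(bx^\pm;p)}.$$
This is the crucial identity: it says that, up to a nonzero constant depending only on $c$, one has $\theta(cx^\pm;p)=\big(X(x)-X(c)\big)\,\theta(bx^\pm;p)$. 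In words, every elementary theta factor equals $X(x)-X(c)$ times one and the \emph{same} denominator factor $\theta(bx^\pm;p)$.

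Now I would take an arbitrary even elliptic $g$, factor it by Proposition \ref{efp} as a quotient of $m$ numerator factors $\theta(c_jx^\pm;p)$ over $m$ denominator factors $\theta(d_jx^\pm;p)$, and substitute the displayed relation into each factor. Every factor, whether in the numerator or the denominator, then contributes exactly one copy of $\theta(bx^\pm;p)$; the $m$ copies from the numerator cancel the $m$ copies from the denominator, and what survives is an overall constant times a quotient of products of the linear terms $X(x)-X(c_j)$ and $X(x)-X(d_j)$. This is precisely a rational function of $X(x)$, which is what we want.

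The step requiring care is the degenerate case $c\in b^\pm p^{\mathbb Z}$, where $X(c)=\infty$ and the boxed identity is no longer available. Here I would argue separately that $\theta(cx^\pm;p)$ is simply a nonzero constant multiple of $\theta(bx^\pm;p)$: replacing $c$ by $bp^k$ and using \eqref{tqpi} introduces only $x$-independent prefactors in the even combination, and passing between $b$ and $1/b$ uses the inversion relation $\theta(1/x;p)=-x^{-1}\theta(x;p)$, whose $x$-dependence again cancels in $\theta(cx^\pm;p)$. The point of this bookkeeping is that such a degenerate factor \emph{still} contributes exactly one power of $\theta(bx^\pm;p)$, now with no accompanying $X(x)-X(c)$ term but only a constant, so the cancellation count is unaffected and $g$ remains rational in $X$. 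Confirming this count, together with the fact that \eqref{ttr} holds as an identity of meromorphic functions rather than merely for generic parameters, is the only genuine obstacle; everything else is formal.
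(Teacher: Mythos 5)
Your proposal is correct and follows essentially the same route as the paper: the identity you derive from \eqref{ttr} is exactly the paper's equation \eqref{xd}, and your treatment of the degenerate parameters $c\in b^{\pm}p^{\mathbb Z}$ (reduction to a constant multiple of $\theta(bx^\pm;p)$ via \eqref{tqpi} and the inversion formula) is the same normalization the paper performs before applying \eqref{xd}, just organized factor-by-factor instead of up front. The only implicit point worth noting is that the nonvanishing of your constant $\tfrac{a}{c}\theta(ba^\pm;p)/\theta(bc^\pm;p)$ is precisely where the hypothesis $ab,a/b\notin p^{\mathbb Z}$ enters.
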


\begin{proof}
Starting from \eqref{etf}, we order the parameters so that $c_j\in bp^\mathbb Z$ if and only if $1\leq j\leq k$ and 
$d_j\in b p^\mathbb Z$ if and only if $1\leq j\leq l$. Using \eqref{tqpi}, we can assume that the remaining parameters $c_j$ and $d_j$ equal $b$, so that
$$g(x)=C\theta(bx^\pm;p)^{l-k}\frac{\theta(c_1x^\pm,\dots,c_kx^\pm;p)}{\theta(d_1x^\pm,\dots,d_lx^\pm;p)}. $$
On the other hand, \eqref{ttr} gives
\begin{equation}\label{xd}X(x)-X(c)=\frac{a\theta(ba^\pm,cx^\pm;p)}{c\theta(bc^\pm,bx^\pm;p)}. \end{equation}
It follows that
$$g(x)=D\frac{(X(x)-X(c_1))\dotsm(X(x)-X(c_k))}{(X(x)-X(d_1))\dotsm(X(x)-X(d_l))},
 $$
where $D$ is a non-zero constant. 
\end{proof}

That the field of even elliptic functions is generated by a single element can also be understood
 geometrically. Such functions live on the quotient $S$ of the torus by the relation $x=x^{-1}$
(or, additively, $z=-z$). It can be shown that $S$ is a sphere, so there must exist a holomorphic bijection (known as a uniformizing map)
$X$ from $S$ to the Riemann sphere $\mathbb C\cup\{\infty\}$. The meromorphic functions 
 on the Riemann sphere are simply the rational functions. Consequently, the meromorphic functions on $S$ are precisely the rational functions in
 $X(x)$.

\begin{problem}
By drawing pictures, convince yourself that the quotient $S$ discussed in the text is a topological sphere.
\end{problem}

\begin{problem}\label{ete}
Generalize Proposition \ref{efp} to elliptic functions of the third kind.
\end{problem}

\begin{problem}\label{xde}
Deduce from \eqref{xd} that
$$X'(x)= \frac{a(p;p)_\infty^2\theta(ba^\pm,x^2;p)}{x^2\theta(bx^{\pm};p)^2}.$$
\end{problem}

\begin{problem}\label{pxe}
Suppose that $f$ is analytic on $\mathbb C^\ast $ and satisfies $f(px)=f(x)/x^np^{2n}$ and $f(1/x)=f(x)$, where $n\in\mathbb Z_{\geq 0}$. Show that
$$f(x)=\theta(bx^\pm;p)^n p(X(x)), $$
where $p$ is a unique polynomial of degree at most $n$ and $X$ is as in \eqref{xe}. In particular, the space of such functions has dimension $n+1$.
\end{problem}

\section{Interpolation and partial fractions}\label{pfs}

Lagrange interpolation expresses a polynomial 
 of degree $n-1$ in terms of its values at $n$ distinct points. 
If the points are $(y_j)_{j=1}^{n}$, one introduces the polynomials $(p_j)_{j=1}^n$ by
$$p_j(x)=\prod_{k=1,\, k\neq j}^n (x-y_k). $$
Note that $p_j(y_k)\neq 0$ if and only if $k=j$. Thus, if 
\begin{equation}\label{pep}p(x)=\sum_{j=1}^n c_j p_j(x) \end{equation}
then $c_k=p(y_k)/p_k(y_k)$. In particular, choosing $p$ as the zero polynomial it follows that  $(p_j)_{j=1}^n$ are linearly independent.
Counting dimensions, they form a basis for the polynomials of degree at most $n-1$, so any such polynomial $p$ can be expanded as in \eqref{pep}.
This yields the interpolation formula
$$p(x)=\sum_{j=1}^n p(y_j)\prod_{k=1,\, k\neq j}^n\frac{x-y_k}{ y_j-y_k}. $$ 
If we let $p(x)=\prod_{k=1}^{n-1}(x-z_k)$  and divide by $\prod_{k=1}^n(x-y_k)$ we get the partial fraction expansion
\begin{equation}\label{rpf}\frac{\prod_{k=1}^{n-1} (x-z_k)}{\prod_{k=1}^n(x-y_k)}=\sum_{j=1}^n\frac{\prod_{k=1}^{n-1}(y_j-z_k)}{\prod_{k=1,\,k\neq j}^n(y_j-y_k)}\cdot\frac 1{x-y_j}, \end{equation}
which is useful for integrating rational functions.

Lagrange interpolation also works for theta functions; in fact, we have already seen an example in \S \ref{ttrs}. It may seem natural to replace the polynomials $p_j$ with the theta functions
 $f_j(x)=\prod_{k\neq j}\theta(x/y_k;p)$. However, these functions satisfy different quasi-periodicity relations
and thus don't span a very natural space. Instead, we take $f_j(x)=\theta(tx/y_j;p)\prod_{k\neq j}\theta(x/y_k;p)$. If we first let $p=0$ and then $t=0$ we recover the polynomials $p_j$. We then have the following fact.

\begin{proposition}\label{elp}
Let $t,y_1,\dots,y_n\in\mathbb C^\ast$ be such that neither $t$ nor $y_j/y_k$ for $j\neq k$ is in $p^{\mathbb Z}$. 
Let $V$ be the space of functions that are analytic for $x\neq 0$ and satisfy 
$f(px)=(-1)^n y_1\dotsm y_n t^{-1}x^{-n}f(x)$.  
Then, any $f\in V$ is uniquely determined by the values $f(y_1),\dots,f(y_n)$ and given  by
\begin{equation}\label{eli}f(x)=\sum_{j=1}^nf(y_j)\frac{\theta(tx/y_j;p)}{\theta(t;p)}\prod_{k=1,\,k\neq j}^n\frac{\theta(x/y_k;p)}{\theta(y_j/y_k;p)}.  \end{equation}
\end{proposition}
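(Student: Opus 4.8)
The plan is to imitate the classical Lagrange argument recalled in \eqref{pep}--\eqref{rpf}, with the polynomials $p_j$ replaced by the theta products
$$f_j(x)=\theta(tx/y_j;p)\prod_{k=1,\,k\neq j}^n\theta(x/y_k;p),\qquad j=1,\dots,n.$$
First I would verify that $f_j\in V$. This is a direct computation from quasi-periodicity \eqref{tqp}: each of the $n$ factors of $f_j$ picks up a factor of the form $-y_\bullet/x$ (the first factor contributing $-y_j/(tx)$), and their product is exactly $(-1)^ny_1\dotsm y_nt^{-1}x^{-n}$, which is the multiplier in the definition of $V$. Next comes the ``delta property'' at the nodes. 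For $k\neq j$ the factor $\theta(x/y_k;p)$ occurs in $f_j$, and $\theta(y_k/y_k;p)=\theta(1;p)=0$, so $f_j(y_k)=0$; while for $k=j$,
$$f_j(y_j)=\theta(t;p)\prod_{k=1,\,k\neq j}^n\theta(y_j/y_k;p),$$
which is nonzero precisely because $t\notin p^{\mathbb Z}$ and $y_j/y_k\notin p^{\mathbb Z}$ for $k\neq j$ (Corollary \ref{tzc}). Thus $f_j(y_k)\neq 0$ if and only if $k=j$, exactly as for the $p_j$.

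The whole statement then reduces to a single uniqueness lemma: \emph{if $g\in V$ and $g(y_1)=\dots=g(y_n)=0$, then $g\equiv 0$.} Granting this, I set
$$g(x)=f(x)-\sum_{j=1}^n\frac{f(y_j)}{f_j(y_j)}\,f_j(x).$$
Since $V$ is a linear space and $f,f_j\in V$, we have $g\in V$, and the delta property gives $g(y_k)=f(y_k)-\frac{f(y_k)}{f_k(y_k)}f_k(y_k)=0$ for every $k$. Hence $g\equiv 0$, which is exactly \eqref{eli} after substituting the value of $f_j(y_j)$ found above. The uniqueness assertion (that $f$ is determined by $f(y_1),\dots,f(y_n)$) follows by applying the lemma to the difference of two functions in $V$ with the same nodal values.

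The real work, and the step I expect to be the main obstacle, is the uniqueness lemma; everything else is mechanical transcription of the polynomial case. Here I would invoke Corollary \ref{tfc} with multiplier $Ax^{-n}$, where $A=(-1)^ny_1\dotsm y_nt^{-1}$: a nonzero $g\in V$ must be a single theta product $g(x)=C\,\theta(x/a_1,\dots,x/a_n;p)$ with $C\neq 0$ and $a_1\dotsm a_n=y_1\dotsm y_nt^{-1}$. Its zero set is $\bigcup_i a_ip^{\mathbb Z}$, so each vanishing point $y_j$ lies in some coset $a_{i(j)}p^{\mathbb Z}$. Because the hypothesis $y_j/y_k\notin p^{\mathbb Z}$ puts the $y_j$ into $n$ distinct cosets of $p^{\mathbb Z}$, the assignment $j\mapsto i(j)$ is injective, hence a bijection; after relabelling we may write $a_i=y_ip^{m_i}$ with $m_i\in\mathbb Z$.

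It then remains to extract the contradiction from the product constraint. Substituting $a_i=y_ip^{m_i}$ gives
$$y_1\dotsm y_n\,p^{\,m_1+\dots+m_n}=a_1\dotsm a_n=y_1\dotsm y_n\,t^{-1},$$
so that $t=p^{-(m_1+\dots+m_n)}\in p^{\mathbb Z}$, contradicting the standing hypothesis $t\notin p^{\mathbb Z}$. Hence no nonzero such $g$ can vanish at all the nodes, the uniqueness lemma holds, and the proof is complete. I would emphasize that the two genericity hypotheses play complementary roles: the condition $y_j/y_k\notin p^{\mathbb Z}$ guarantees that the nodes occupy distinct cosets (so that matching the zeros is a bijection), while the condition $t\notin p^{\mathbb Z}$ is precisely what rules out the degenerate configuration in which the zeros of $g$ coincide with the nodes.
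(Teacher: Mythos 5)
Your proof is correct, and its skeleton --- the basis $f_j$, the delta property at the nodes, and the subtraction $g=f-\sum_j\frac{f(y_j)}{f_j(y_j)}f_j$ --- coincides exactly with the paper's proof; the two arguments part ways only at the uniqueness lemma, which you rightly identify as the real work. The paper handles it by division: since $g$ vanishes on every coset $p^{\mathbb Z}y_j$ by quasi-periodicity, the quotient $h(x)=g(x)/\theta(x/y_1,\dots,x/y_n;p)$ is analytic on $\mathbb C^\ast$ and satisfies $h(px)=h(x)/t$, so Lemma \ref{ll} forces $h(x)=Cx^N$, and then $p^N=1/t$ would put $t$ in $p^{\mathbb Z}$, whence $C=0$. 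You instead invoke Corollary \ref{tfc} to factor a hypothetical nonzero $g$ as $C\,\theta(x/a_1,\dots,x/a_n;p)$ with $a_1\dotsm a_n=y_1\dotsm y_nt^{-1}$, match the zero cosets $a_ip^{\mathbb Z}$ bijectively against the nodes $y_j$ (injectivity coming from $y_j/y_k\notin p^{\mathbb Z}$), and extract the contradiction $t\in p^{\mathbb Z}$ from the product constraint. The two routes are close cousins --- Corollary \ref{tfc} is itself proved by the same divide-and-apply-Lemma-\ref{ll} technique --- but yours makes especially transparent the complementary roles of the two genericity hypotheses (distinct cosets so the zero-matching is a bijection, $t\notin p^{\mathbb Z}$ to exclude the degenerate configuration), at the cost of leaning on a classification result that the paper only states and leaves as an exercise (Theorem \ref{etkt}), whereas the paper's division argument is self-contained given Lemma \ref{ll} alone.
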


\begin{proof}
With $f_j$ as defined above,
it is easy to see that $f_j\in V$. By the conditions on the parameters,  $f_j(y_k)\neq 0$ if and only if $k=j$. Take now $f\in V$ and consider
$$g(x)=f(x)-\sum_{j=1}^n\frac{f(y_j)}{f_j(y_j)}\,f_j(x). $$
Then, $g\in V$ and $g$ vanishes at $x=y_1,\dots,y_n$. By quasi-periodicity, it vanishes at $p^{\mathbb Z}y_j$, so
$$h(x)=\frac{g(x)}{\theta(x/y_1,\dots,x/y_n;p)} $$  is analytic on $\mathbb C^\ast$. 
Moreover, $h(px)=h(x)/t$.  By Lemma \ref{ll}, we can write $h(x)=Cx^N$ with $N\in\mathbb Z$. Since $t\notin p^{\mathbb Z}$, we must have $C=0$ and consequently
$$f(x)=\sum_{j=1}^n\frac{f(y_j)}{f_j(y_j)}\,f_j(x). $$
Writing this out explicitly gives \eqref{eli}.
\end{proof}

Note that it follows that $(f_j)_{j=1}^n$ form a basis for $V$ and, in particular, that $\dim V=n$. This is expected since, by 
Corollary \ref{tfc}, any $f\in V$ is of the form $f(x)=C\prod_{k=1}^n\theta(x/z_k)$, where $tz_1\dotsm z_n=y_1\dotsm y_n$, and is thus described by $n$ free parameters. Inserting this factorization in Proposition \ref{elp}
and dividing by $\prod_{k=1}^n\theta(x/y_k)$, we obtain the elliptic partial fraction expansion
\begin{equation}\label{epf}\prod_{k=1}^n\frac{\theta(x/z_k;p)}{\theta(x/y_k;p)}=\sum_{j=1}^n\frac{\prod_{k=1}^n\theta(y_j/z_k;p)}{\prod_{k=1,\,k\neq j}^n\theta(y_j/y_k;p)}\cdot\frac{\theta(xy_1\dotsm y_n/y_jz_1\dotsm z_n;p)}{\theta(y_1\dotsm y_n/z_1\dotsm z_n,x/y_j;p)}. \end{equation}
If we let $x=z_n$, cancel all factors involving $z_n$ and then introduce a new variable  $z_n=y_1\dotsm y_n/z_1\dotsm z_{n-1}$,  we obtain the elegant identity\footnote{The earliest reference I have found is \cite[p.\ 46]{t}.}
\begin{equation}\label{epv}\sum_{j=1}^n\frac{\prod_{k=1}^{n}\theta(y_j/z_k;p)}{\prod_{k=1,\,k\neq j}^n\theta(y_j/y_k;p)}=0,\qquad y_1\dotsm y_n=z_1\dotsm z_n, \end{equation}
which is in fact equivalent to \eqref{epf}.

We will also need another elliptic partial fraction expansion, connected with even elliptic functions. Namely,
\begin{equation}\label{dpf}\frac{\prod_{k=1}^{n-1}\theta(xz_k^\pm;p)}{\prod_{k=1}^n\theta(x y_k^\pm;p)}=\sum_{j=1}^n\frac {\prod_{k=1}^{n-1}\theta(y_jz_k^{\pm};p)}{\theta(xy_j^\pm;p)\prod_{k=1,\,k\neq j}^n\theta(y_jy_k^\pm;p)}. \end{equation}
The proof is left to the reader as Exercise \ref{dpe}.
The special case $x=z_1$ is
\begin{equation}\label{vdf} \sum_{j=1}^n\frac {y_j\prod_{k=2}^{n-1}\theta(y_jz_k^{\pm};p)}{\prod_{k=1,\,k\neq j}^n\theta(y_jy_k^\pm;p)}=0, \qquad n\geq 2.\end{equation}
Again, this is equivalent to the general case.

\begin{problem}
Show that \eqref{epf} is equivalent to \eqref{epv} (with $n$ replaced by $n+1$).
\end{problem}

\begin{problem}
Show that the case $n=2$ of \eqref{epf} and \eqref{dpf} are both equivalent to Weierstrass's identity \eqref{ttr}.
\end{problem}

\begin{problem}\label{dpe} Give two proofs of \eqref{dpf}. First, imitate the proof of \eqref{epf}, using the basis $f_j(x)=\prod_{k\neq j}\theta(y_k x^\pm;p)$ 
for an appropriate space of theta functions. Second, substitute $x=X(x)$, $y_k=X(y_k)$, $z_k=X(z_k)$ in \eqref{rpf}, where $X$ is as in \eqref{xe}.
\end{problem}

\begin{problem}
Show in two ways that, for $a_1\dotsm a_nb_1\dotsm b_{n+2}=1$,
\begin{multline}\label{sbt}
x^{-n-1}\theta(a_1x,\dots,a_nx,b_1x,\dots,b_{n+2}x;p)\\
-x^{n+1}\theta(a_1x^{-1},\dots,a_nx^{-1},b_1x^{-1},\dots,b_{n+2}x^{-1};p)\\
=\frac{(-1)^nx\theta(x^{-2};p)}{a_1\dotsm a_n}\sum_{k=1}^n\prod_{j=1}^{n+2}\theta(a_kb_j;p)\prod_{j=1,\,j\neq k}^n\frac{\theta(a_jx^\pm;p)}{\theta(a_k/a_j;p)}.
\end{multline}
First, prove that \eqref{sbt} is equivalent to \eqref{epf}, then prove it directly by viewing it as an interpolation formula for functions in $x$.
\end{problem}

\begin{problem}
Let $V$ be the vector space of functions satisfying the conditions of Corollary \ref{tfc}. By expanding the elements of $V$ as Laurent series and using Exercise~\ref{jte}, show that the functions 
$f_j(x)=x^j\theta(-p^jx^n/t;p^n)$, $j=1,\dots,n$,
form a basis for $V$. (This gives an independent proof that $\dim V=n$.)
\end{problem}

\begin{problem}
Use \eqref{epf} to prove Frobenius's determinant evaluation\footnote{See \cite{kn} for  applications to multivariable elliptic hypergeometric series.}
\begin{multline*}\det_{1\leq i,j\leq n}\left(\frac{\theta(tx_iy_j;p)}{\theta(x_iy_j;p)}\right)\\
=\frac{\theta(t;p)^{n-1}\theta(tx_1\dotsm x_ny_1\dotsm y_n;p)\prod_{1\leq i<j\leq n}x_jy_j\theta(x_i/x_j,y_i/y_j;p)}{\prod_{i,j=1}^n\theta(x_iy_j;p)}. \end{multline*}
\end{problem}

\section{Modularity and elliptic curves}
We have now presented a minimum of material on elliptic functions needed for the remainder of these notes. We proceed to 
discuss some topics that are more peripheral to our main purpose, but so central in other contexts that we cannot
ignore them completely. We start with the following important fact. 

\begin{theorem}\label{msl}
If $\tau$ and $\tau'$ are in the upper half-plane, the corresponding complex tori $E_\tau=\mathbb C/(\mathbb Z+\tau\mathbb Z)$
and $E_{\tau'}$ are equivalent as Riemann surfaces if and only if 
 $\tau'=(a\tau+b)/(c\tau+d)$ for some integers $a,\,b,\,c,\,d$ with $ad-bc=1$. If that is the case then $\phi(z)=z/(c\tau+d)$ gives an equivalence
$E_\tau\rightarrow E_{\tau'}$.
\end{theorem}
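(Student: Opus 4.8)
The plan is to prove both implications by passing to the universal covers. Both tori have $\mathbb{C}$ as universal cover, with covering maps the quotient projections $\pi_\tau\colon\mathbb{C}\to E_\tau$ and $\pi_{\tau'}\colon\mathbb{C}\to E_{\tau'}$; I write $\Lambda_\tau=\mathbb{Z}+\tau\mathbb{Z}$ for the corresponding lattice. The guiding principle is that a holomorphic map between tori lifts to a holomorphic self-map of $\mathbb{C}$, and that the rigidity of such a lift forces it to be affine; this is what ties the analytic equivalence to the arithmetic condition on $\tau,\tau'$.

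For the easy direction, suppose $\tau'=(a\tau+b)/(c\tau+d)$ with $ad-bc=1$, and I would simply verify that $\phi(z)=z/(c\tau+d)$ works. Setting $\lambda_1=\phi(\tau)=\tau/(c\tau+d)$ and $\lambda_2=\phi(1)=1/(c\tau+d)$, a direct computation gives $a\lambda_1+b\lambda_2=\tau'$ and $c\lambda_1+d\lambda_2=1$, so the matrix $\left(\begin{smallmatrix}a&b\\c&d\end{smallmatrix}\right)\in SL_2(\mathbb{Z})$ carries the pair $(\lambda_1,\lambda_2)$ onto the basis $(\tau',1)$ of $\Lambda_{\tau'}$. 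Since that matrix is invertible over $\mathbb{Z}$, the pair $(\lambda_1,\lambda_2)$ is itself a $\mathbb{Z}$-basis of $\Lambda_{\tau'}$, whence $\phi(\Lambda_\tau)=\Lambda_{\tau'}$. Therefore $\phi$ descends to a holomorphic map $E_\tau\to E_{\tau'}$ whose inverse is induced by $z\mapsto(c\tau+d)z$, i.e.\ an equivalence of Riemann surfaces.

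For the converse, let $F\colon E_\tau\to E_{\tau'}$ be an equivalence. Since $\mathbb{C}$ is simply connected, $F\circ\pi_\tau$ lifts through $\pi_{\tau'}$ to a holomorphic $\tilde F\colon\mathbb{C}\to\mathbb{C}$. For each $\lambda\in\Lambda_\tau$ the difference $\tilde F(z+\lambda)-\tilde F(z)$ lies in $\Lambda_{\tau'}$ for every $z$; being continuous and discrete-valued it is constant, so $\tilde F'(z+\lambda)=\tilde F'(z)$. Thus $\tilde F'$ is an entire $\Lambda_\tau$-periodic function, that is, an entire elliptic function, and is therefore constant (cf.\ the Liouville theorem recalled in the footnote to \S\ref{ttrs}, which follows from Theorem \ref{eftt}). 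Hence $\tilde F(z)=\alpha z+\beta$ for some $\alpha,\beta\in\mathbb{C}$, with $\alpha\neq0$ because $F$ is bijective, and the well-definedness and bijectivity of the induced map force $\alpha\Lambda_\tau=\Lambda_{\tau'}$.

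It then remains to extract the stated form and fix the sign. Since $\{\alpha,\alpha\tau\}$ and $\{1,\tau'\}$ are both $\mathbb{Z}$-bases of $\Lambda_{\tau'}$, there is a matrix $\left(\begin{smallmatrix}a&b\\c&d\end{smallmatrix}\right)\in GL_2(\mathbb{Z})$ with $\tau'=\alpha(a\tau+b)$ and $1=\alpha(c\tau+d)$; the second relation gives $\alpha=1/(c\tau+d)$, and substituting into the first yields $\tau'=(a\tau+b)/(c\tau+d)$, which also recovers $\alpha$ and hence the map $\phi$. Finally I would pin down the determinant using $\operatorname{Im}\tau'=(ad-bc)\operatorname{Im}\tau/|c\tau+d|^2$: as $\operatorname{Im}\tau>0$ and $\operatorname{Im}\tau'>0$, necessarily $ad-bc=+1$. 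I expect the only real obstacle to be the rigidity step in the converse—showing the lift is affine—but once the Liouville theorem for elliptic functions is invoked this is immediate, and the remaining change-of-basis bookkeeping and sign determination are routine.
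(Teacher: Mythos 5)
Your proof is correct, and its overall skeleton matches the paper's: pass to the universal cover, show the lift is affine, translate the equivalence into the lattice identity $\alpha(\mathbb Z+\tau\mathbb Z)=\mathbb Z+\tau'\mathbb Z$, and fix the sign of the determinant via $\operatorname{Im}(\tau')=(ad-bc)\operatorname{Im}(\tau)/|c\tau+d|^2$. The genuine difference is the rigidity step. The paper simply asserts that an equivalence of tori \emph{is} an invertible entire function and invokes the classification of such functions as $\phi(z)=Cz+D$ --- a standard but nontrivial fact of complex analysis (one must rule out an essential singularity at infinity). You instead note that for each $\lambda\in\mathbb Z+\tau\mathbb Z$ the function $\tilde F(z+\lambda)-\tilde F(z)$ is continuous with values in the discrete set $\mathbb Z+\tau'\mathbb Z$, hence constant, so $\tilde F'$ is an entire elliptic function and therefore constant by the Liouville theorem that the paper itself records (footnote in \S\ref{ttrs}, a consequence of Theorem \ref{eftt}). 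This buys two things: it sidesteps the need to justify that the lift of a biholomorphism is itself a bijection of $\mathbb C$ (a point the paper glosses over with the word ``equivalently''), and it replaces an external complex-analytic ingredient by a tool internal to the paper. Two smaller divergences: you obtain $ad-bc=\pm1$ directly from the fact that the transition matrix between two $\mathbb Z$-bases of the same lattice lies in $\mathrm{GL}(2,\mathbb Z)$, whereas the paper reaches the same conclusion by interchanging the roles of $\tau$ and $\tau'$ and multiplying the two matrices --- the same argument in different packaging; and you verify the ``easy'' direction (that $\phi(z)=z/(c\tau+d)$ really descends to an equivalence) in detail, which the paper leaves as ``easy to check.''
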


In this context, $\tau$ and $\tau'$ are called  \emph{moduli}\footnote{The word modulus is also used for other quantities parametrized by $\tau$, cf.\ Exercise \ref{scde}.} and the map $\tau\mapsto \tau'$ a \emph{modular transformation}. 
The rule for composing such maps is the same as for multiplying the matrices $\left(\begin{smallmatrix}a&b\\c&d\end{smallmatrix}\right)$,
which form the \emph{modular group} $\mathrm{SL}(2,\mathbb Z)$. (More precisely, the group of modular transformations is isomorphic to $\mathrm{SL}(2,\mathbb Z)/\{\pm 1\}$.)
  The lemma states that
we can parametrize the space of complex tori by identifying any two  moduli related by a modular transformation; the
corresponding quotient of the upper half-plane is called  the moduli space. The term moduli space is nowadays used more generally for any space parametrizing geometric objects, but the  origin of the term comes from the special case considered here.

\begin{proof}[Proof of \emph{Theorem \ref{msl}}]
Let $\phi$ be an invertible analytic map from $E_\tau$ to $E_{\tau'}$. Equivalently, $\phi$ is an invertible entire function such that
$$\phi(z+\mathbb Z+\tau\mathbb Z)= \phi(z)+\mathbb Z+\tau'\mathbb Z. $$
Since any invertible entire function has the form $\phi(z)=Cz+D$, $C\neq 0$, we get
\begin{equation}\label{mr}C(\mathbb Z+\tau\mathbb Z)= \mathbb Z+\tau'\mathbb Z. \end{equation}
Choosing $1$ in the right-hand side gives $C=1/(c\tau+d)$ for some integers $c$, $d$. If we then choose $\tau'$ in the right-hand side we find that
$\tau'=(a\tau+b)/(c\tau+d)$ for some integers $a$, $b$. Interchanging the roles of $\tau$ and $\tau'$ gives $\tau=(A\tau+B)/(C\tau +D)$ for integers $A$, $B$, $C$, $D$, where necessarily
$$\left(\begin{matrix}a&b\\c&d\end{matrix}\right)\left(\begin{matrix}A&B\\C&D\end{matrix}\right)=\left(\begin{matrix}1&0\\0&1\end{matrix}\right). $$ 
Taking determinants gives $(ad-bc)(AD-BC)=1$ and hence
$ad-bc=\pm 1$. We may compute
 $$\operatorname{Im}(\tau')=\frac{ad-bc}{|c\tau+d|^2}\operatorname{Im}(\tau)$$
and conclude that $ad-bc=1$.  The final statement is easy to check.
\end{proof}

Since meromorphic functions on $E_\tau$  can be factored in terms of theta functions, one would expect that
$\theta(e^{2\ti\pi z};e^{2\ti\pi\tau})$ transforms nicely under the modular group.
Indeed, if $\left(\begin{smallmatrix}a&b\\c&d\end{smallmatrix}\right)\in\mathrm{SL}(2,\mathbb Z)$, then
\begin{equation}\label{tmt}e^{-\ti\pi z/(c\tau+d)}\theta(e^{2\ti\pi z/(c\tau+d)};e^{2\ti\pi(a\tau+b)/(c\tau+d)})=Ce^{\ti\pi cz^2/(c\tau+d)-\ti\pi z}\theta(e^{2\ti\pi z};e^{2\ti\pi\tau}), \end{equation}
where $C=C(a,b,c,d;\tau)$ is independent of $z$. To see this, simply observe that if $f(z)$ is the quotient of the left-hand and right-hand sides, then
$f$ is an entire function, as both sides vanish precisely at $\mathbb Z+\tau\mathbb Z$. Moreover, $f$ is periodic with periods $1$ and $\tau$ so, by
Liouville's theorem, $f$ is constant.

Explicit expressions for the constant in \eqref{tmt} exist but are somewhat complicated \cite[\S 80]{rad}.
Usually, one is content with giving it for the transformations $\tau\mapsto \tau+1$ and $\tau\mapsto -1/\tau$, which are known to generate the modular group.
In the first case, $C=1$ trivially; the second case is treated in Exercise \ref{mte}.

In Section \ref{ees} we showed that the field of even elliptic functions is generated by a single element. 
A slight extension of the  argument gives an analogous statement for general elliptic functions.

\begin{theorem}\label{ecp}
The field of all elliptic functions, with periods $1$ and $\tau$, is isomorphic to the quotient field $\mathbb C(X,Y)/(Y^2-X(X-1)(X-\lambda))$, where
 $\lambda=\lambda(\tau)$ is a certain function (the lambda invariant).\footnote{\label{lp} There are six possible choices for $\lambda$. The standard one is derived in Exercise~\ref{ele}:
$$\lambda(\tau)=16\sqrt p\frac{(-p;p)_\infty^8}{(-\sqrt p;p)_\infty^8}=16q-128 q^2+704q^3-\dotsm,\qquad q=\sqrt p=e^{\ti\pi\tau}. $$} 
\end{theorem}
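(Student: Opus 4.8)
The plan is to realize the field $K$ of all elliptic functions as a quadratic extension of its subfield $K_0$ of even elliptic functions, which by Proposition~\ref{uec} is $K_0=\mathbb C(X)$ with $X$ as in \eqref{xe}. Writing any $f\in K$ as the sum of its even and odd parts under $x\mapsto 1/x$, namely $f(x)=\tfrac12\bigl(f(x)+f(1/x)\bigr)+\tfrac12\bigl(f(x)-f(1/x)\bigr)$, the even part already lies in $K_0$. To handle the odd part I would fix once and for all a nonzero odd elliptic function $Y$; the quotient of any odd elliptic function by $Y$ is then even, so the odd part of $f$ equals $Y$ times an element of $K_0$. Hence $K=K_0\oplus K_0\,Y=\mathbb C(X)[Y]$, and since $Y\notin K_0$ while $Y^2\in K_0$ we get $[K:K_0]=2$ and an isomorphism $K\cong\mathbb C(X,Y)/(Y^2-R(X))$ for the rational function $R(X)=Y^2$.

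The core of the argument is to compute $R$ and bring it into the stated cubic form. A convenient odd generator is $Y=xX'(x)$: differentiating $X(px)=X(x)$ and $X(1/x)=X(x)$ shows $Y(px)=Y(x)$ and $Y(1/x)=-Y(x)$, so $Y$ is odd and elliptic, and by Exercise~\ref{xde} it is given explicitly in terms of theta functions. Its zeros are precisely the four two-torsion points $x\in\{1,-1,\sqrt p,-\sqrt p\}$ (the zeros of $\theta(x^2;p)$), each simple, and its poles are the two double poles at the points where $X=\infty$. Consequently $Y^2$ and the product $\prod_{i=1}^4\bigl(X-e_i\bigr)$ over the branch values $e_i=X(t_i)$, $t_i$ the two-torsion points, are even elliptic functions with identical divisors: each factor $X-e_i$ has a double zero at $t_i$ (because $X'$ vanishes at the ramification point $t_i$) and a simple pole where $X=\infty$, matching the double zeros and fourth-order poles of $Y^2$. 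By Proposition~\ref{uec} the two functions are therefore proportional, so $R(X)=c\prod_{i=1}^4\bigl(X-e_i\bigr)$ for some $c\in\mathbb C^\ast$; the $e_i$ are distinct since $X$ is injective on the quotient torus.

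Finally I would normalize. Any fractional linear transformation of $X$ is again a generator of $K_0$, and rescaling $Y$ by an element of $\mathbb C(X)^\ast$ changes $R$ only by a square, so I am free to replace $X$ by the Möbius transform sending $e_4\mapsto\infty$, $e_1\mapsto 0$, $e_2\mapsto 1$, and to absorb the residual constant into $Y$. The quartic then collapses to the cubic $X(X-1)(X-\lambda)$, where $\lambda$ is the cross-ratio of $(e_1,e_2,e_3,e_4)$; this is the lambda invariant. This yields the claimed isomorphism $K\cong\mathbb C(X,Y)/(Y^2-X(X-1)(X-\lambda))$.

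The step I expect to be most delicate is the divisor bookkeeping at the two-torsion points: one must use that these are exactly the ramification points of the double cover $E\to S$ (the fixed points of $x\mapsto 1/x$), so that each $X-e_i$ acquires a double rather than a simple zero there, which is what forces $R$ to be squarefree of the correct degree and lets the quartic reduce to a cubic. The genuinely computational part — evaluating the branch values $e_i$ from \eqref{xd} at $x=1,-1,\sqrt p,-\sqrt p$ and simplifying their cross-ratio into the explicit product formula for $\lambda(\tau)$ — I would defer, as the paper does, to Exercise~\ref{ele}.
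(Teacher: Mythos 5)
Your proposal is correct, and while it shares the paper's overall skeleton (decompose $f$ into even and odd parts, use Proposition \ref{uec} for the even part, and write the odd part as $Y$ times an even function), it differs genuinely in the two key steps. First, where you compute $Y^2$: the paper sidesteps your quartic-plus-M\"obius reduction entirely by choosing the parameters $a,b$ in \eqref{xe} to be two of the half-periods $\{1,-1,\sqrt p,-\sqrt p\}$, so that one branch value is $0$ and one is $\infty$ from the start and $Y^2=CX(X-A)(X-B)$ comes out as a cubic directly, with only a rescaling left to do. Your route keeps $X$ generic, matches divisors to get $Y^2=c\prod_{i=1}^4(X-e_i)$, and then invokes the classical quartic-to-cubic reduction; this costs a computation you only assert (it does work: the M\"obius map sending $e_1,e_2,e_4\mapsto 0,1,\infty$ turns the quartic into the cubic after multiplying $Y$ by a constant times $(X-e_4)^2$), but it buys the interpretation of $\lambda$ as a cross-ratio of the four branch values, which also explains the footnote's ``six possible choices'' as the six values of a cross-ratio under permutation. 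Second, where you rule out further relations: the paper does this by hand, writing $P(X,Y)=Q(X,Y^2)+YR(X,Y^2)$, separating even and odd parts, and applying the factor theorem, whereas you get it for free from standard field theory ($Y^2\in\mathbb C(X)$, $Y\notin\mathbb C(X)$ since a nonzero function cannot be both even and odd, hence $T^2-R$ is the minimal polynomial and $K\cong\mathbb C(X)[T]/(T^2-R)$); your packaging is cleaner. Likewise your observation that the quotient of two odd elliptic functions is even, so that odd elliptic functions are exactly $Y\cdot\mathbb C(X)$, is slicker than the paper's route through the explicit theta-factorization of odd functions. Two small points you should nail down within the paper's toolkit: the simple zeros of $Y=xX'$ at the half-periods and its double poles follow from the explicit formula of Exercise \ref{xde}, and the distinctness of the $e_i$ follows either from \eqref{xd} (which shows $X$ separates points of the quotient) or from Lemma \ref{zpl}, since $e_i=e_j$ would give $X-e_i$ four zeros but only two poles in a period annulus.
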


\begin{proof}
We will only sketch the proof. Consider first an 
odd multiplicatively elliptic function $g$, that is, $g(px)=-g(1/x)=g(x)$. Let $h(x)=x g(x)/\theta(x^2;p)$. Then, $h(1/x)=h(x)$ and $h(px)=p^2x^4h(x)$.
As in the proof of Proposition~\ref{efp} (cf.~Exercise \ref{ete}), it follows that 
$$g(x)=C\frac{\theta(x^2,c_1x^\pm,\dots,c_{m-2}x^\pm;p)}{x\,\theta(d_1x^\pm,\dots,d_m x^\pm;p)}. $$ 
If we take $X(x)$ as in  Proposition \ref{uec} and
$Y(x)={\theta(x^2;p)}/{x\theta(bx^\pm;p)^2}$,
it follows that
$g(x)=Y(x)p(X(x))$,
where $p$ is a rational function.
As any function is the sum of an even and an odd function, and the even elliptic functions are rational in $X$, it follows that the field of all elliptic functions is generated by $X$ and $Y$.\footnote{By Exercise \ref{xde}, we can always choose $Y(x)=xX'(x)$.} 

Let us now consider the possible relations between $X$ and $Y$. Since $Y^2$ is even, it is a rational function of $X$. 
If we choose $a$ and $b$ in the definition of $X$ and $Y$ as two of the numbers $\{1,-1,\sqrt p,-\sqrt p\}$,
then it follows from the proof of Proposition~\ref{uec} that
$$Y^2=CX(X-A)(X-B), $$
where $A$ and $B$ are the values of $X$ at the remaining two numbers and $C\neq 0$. By rescaling $X$ and $Y$, this can be reduced to
$Y^2=X(X-1)(X-\lambda)$. 
It remains to prove that there are no further algebraic relations between $X$ and $Y$. To this end, suppose that
$P(X(x),Y(x))\equiv 0$ for some rational $P$. We can  write
$$P(X,Y)=Q(X,Y^2)+YR(X,Y^2) $$
with $Q$ and $R$ rational. Let  $E(X)=X(X-1)(X-\lambda)$. Taking the even and odd part of
$P(X(x),Y(x))$, it follows that $Q(X(x),E(X(x)))=R(X(x),E(X(x)))=0$. But, by the proof of Proposition~\ref{uec},  the 
zero elliptic function can only be obtained from the zero rational function, so $Q(X,E(X))=R(X,E(X))=0$. By the factor theorem,
$Q(X,Y)$ and $R(X,Y)$ are divisible by $Y-E(X)$, hence $P(X,Y)$ is divisible by $Y^2-E(X)$. 
\end{proof}

Theorem \ref{ecp} shows that a complex torus is an algebraic variety, known as an \emph{elliptic curve}.

\begin{problem}\label{mte}
Compute the constant $C(\tau)$ in \eqref{tmt} for the transformation $\tau\mapsto -1/\tau$. Indeed, show that 
$$ e^{-\ti\pi z/\tau}\theta(e^{2\ti\pi z/\tau};e^{-2\ti\pi/\tau})=-\ti e^{\ti\pi(\tau+\tau^{-1})/6+\ti\pi z^2/\tau-\ti\pi z}\theta(e^{2\ti\pi z};e^{2\ti\pi\tau}).$$
One way is to first specialize $z$ to the three values  $1/2$, $\tau/2$ and $(\tau+1)/2$. Then apply  \eqref{tte} to get an expression for $C(\tau)^3$ and finally let $\tau=\ti$ to find the correct branch of the 
cubic root.\footnote{This is the famous imaginary transformation of Jacobi. It is often proved in more complicated ways, 
by authors insisting on using the series representation for $\theta$ given in Exercise~\ref{jte}.
The elementary proof sketched here is taken from the classic textbook \cite{ww}.} 
\end{problem}

\begin{problem}
By letting $z\rightarrow 0$ in the previous exercise, prove  that \emph{Dedekind's eta function} $\eta(\tau)=p^{1/24}(p;p)_\infty$, $p=e^{2\ti\pi\tau}$, satisfies
$\eta(-1/\tau)=\sqrt{-\ti\tau}\eta(\tau)$. Using that the modular group is generated by $\tau\mapsto \tau+1$ and $\tau\mapsto -1/\tau$, conclude that the 
\emph{modular discriminant} $\Delta(\tau)=p(p;p)_\infty^{24}$ satisfies\footnote{
  More generally, a \emph{cusp form} is an analytic function on the upper half-plane that vanishes at $p=0$ ($\tau=\ti\infty$) and satisfies
  \eqref{cf} with $12$ replaced by an arbitrary positive integer (the weight). The modular discriminant is the simplest cusp form in the sense that
  any cusp form of weight $12$ is proportional to $\Delta$ and any cusp form of smaller weight vanishes identically \cite{ap}. }
\end{problem}
\begin{equation}\label{cf}\Delta\left(\frac{a\tau+b}{c\tau+d}\right)=(c\tau+d)^{12}\Delta(\tau),\qquad \left(\begin{matrix}a&b\\c&d\end{matrix}\right)\in\mathrm{SL}(2,\mathbb Z).\end{equation}

\begin{problem}
Let $\psi(x)=\sum_{n=1}^\infty e^{-n^2\pi x}$. 
Combining the previous two problems with Exercise \ref{jte}, show that $\psi(1/x)=\sqrt x\psi(x)+(1-\sqrt x)/2$.
Next, show that
$$ \pi^{-s/2}\Gamma(s/2)\zeta(s)=\int_0^\infty \psi(x)x^{s/2-1}\,dx,\qquad \operatorname{Re}(s)>1,$$
where
$$\Gamma(s)=\int_0^\infty e^{-x} x^{s-1}\,dx,\qquad \zeta(s)=\sum_{n=1}^\infty \frac 1{n^s}.$$
 Deduce that
$$ \pi^{-s/2}\Gamma(s/2)\zeta(s)=-\frac 1s-\frac 1{1-s}+\int_1^\infty\psi(x) \left(x^{s/2}+x^{(1-s)/2}\right)\frac{dx}{x}.$$
It follows that the analytic continuation of the left-hand side is invariant under $s\mapsto 1-s$.\footnote{This is Riemann's original proof of the  functional equation for Riemann's zeta function.}
\end{problem}

\begin{problem}\label{ele}
Let
$$X(x)=\frac{4\sqrt p(-p;p)_\infty^4}{(-\sqrt p;p)_\infty^4}\frac{\theta(-x^\pm;p)}{\theta(-\sqrt p x^\pm;p)}.$$
Using Exercise \ref{xde}, show that 
$$(xX'(x))^2=-CX(X-1)(X-\lambda), $$
where
$$C=(p;p)_\infty^4(-\sqrt p;p)_\infty^8, \qquad
\lambda=16\sqrt p\frac{(-p;p)_\infty^8}{(-\sqrt p;p)_\infty^8}.$$
This gives an explicit expression for the lambda invariant in Theorem \ref{ecp}.
\end{problem}

\begin{problem}
Consider a pendulum released from an angle $\phi_0$. Its subsequent movement is described by the initial value problem
$$\phi''+\frac g l\,\sin\phi=0,\qquad  \phi(0)=\phi_0,\quad \phi'(0)=0.$$
where $\phi$ is the displacement angle, $g$ the gravitational acceleration and $l$ the length of the pendulum. 
Let $Y=(1-\cos\phi)/2=\sin^2(\phi/2)$. Show that
$$(Y')^2=\frac{4g}l\,Y(Y-1)(Y-Y_0),\qquad Y(0)=Y_0=\sin^2(\phi_0/2). $$
Deduce that $Y(t)=X(e^{\ti\mu\, t})$, where $X$ is as in Exercise \ref{ele} with $\mu=\sqrt{4g/lC}$ and $p$ defined implicitly by $\lambda=Y_0$.
In particular, deduce that the period of the pendulum is $2\pi\sqrt{lC/g}$.\footnote{One can give an elegant explanation of the fact that the pendulum is described by elliptic functions as follows. Assume that
$\phi$ is analytic in $t$ and consider $\psi(t)=\pi-\phi(\ti t)$. Then, $\psi$ satisfies the same initial value problem as $\phi$ but with $\phi_0$ replaced by $\pi-\phi_0$. Since we expect that $\phi$ has a real period, so should $\psi$, which leads to an imaginary period for $\phi$.} 
\end{problem}

\begin{problem}
Show that if $\lambda=\lambda(\tau)$ is  as in Theorem \ref{ecp} and 
$$\tilde \lambda(\tau)=\lambda\left(\frac{a\tau+b}{c\tau+d}\right),\qquad \left(\begin{matrix}a&b\\c&d\end{matrix}\right)\in\mathrm{SL}(2,\mathbb Z),$$ then
$$\tilde \lambda\in\left\{\lambda,\ \frac 1{1-\lambda},\ \frac{\lambda-1}\lambda,\ \frac 1\lambda,\ \frac\lambda{\lambda-1},\ 1-\lambda\right\}.$$
\end{problem}

\begin{problem}
Using the previous exercise, show that $j(\tau)=256(1-\mu)^3/\mu^2$, where $\mu=\lambda(1-\lambda)$, is invariant under
$\mathrm{SL}(2,\mathbb Z)$.\footnote{This is the famous fundamental invariant
$$j(\tau)=p^{-1}+744+196884p+\dotsm, \qquad p=e^{2\ti\pi\tau}. $$
One can show that it generates the field of modular functions, that is, the analytic functions on the upper half-plane that are
invariant under $\mathrm{SL}(2,\mathbb Z)$ \cite{ap}.}
\end{problem}

\section{Comparison with classical notation}

   For the benefit of the reader who wants to compare our presentation with the classical approaches of Weierstrass or Jacobi (see e.g.\ \cite{ww})  we provide a few exercises as starting points.

\begin{problem}
In Weierstrass's theory of elliptic functions, the fundamental building block is the $\wp$-function 
$$\wp(z;\tau)=\frac 1{z^2}+\sum_{m,n\in\mathbb Z,\,(m,n)\neq(0,0)}\left(\frac 1{(z+m+n\tau)^2}-\frac{1}{(m+n\tau)^2}\right). $$
Show that $\wp$ is an even additively elliptic function with periods $1$ and $\tau$ and poles precisely at $\mathbb Z+\tau\mathbb Z$.  
Using Proposition~\ref{efp}, deduce that
$$\wp(z;\tau)=C\frac{\theta(ax^\pm;p)}{\theta(x^\pm;p)}, \qquad x=e^{2\ti\pi z},\quad p=e^{2\ti\pi\tau}, $$
for some constants $C$ and $a$ (depending on $\tau$). Deduce from Theorem \ref{ecp} that any additively elliptic function  with periods $1$ and $\tau$
is a rational function of $\wp(z;\tau)$ and its $z$-derivative $\wp'(z;\tau)$.
\end{problem}

\begin{problem}\label{jcte}
Jacobi's theta functions are defined by the Fourier series\footnote{The reader is warned that there are several slightly different versions of these definitions; we follow the conventions of \cite{ww}.}
\begin{align*}
\theta_1(z|\tau)&=2\sum_{n=0}^\infty(-1)^n q^{(n+1/2)^2}\sin((2n+1)z),\\
\theta_2(z|\tau)&=2\sum_{n=0}^\infty q^{(n+1/2)^2}\cos((2n+1)z),\\
\theta_3(z|\tau)&=1+2\sum_{n=1}^\infty q^{n^2}\cos(2nz),\\
\theta_4(z|\tau)&=1+2\sum_{n=1}^\infty(-1)^n q^{n^2}\cos(2nz),\\
\end{align*}
where $q=e^{\ti\pi\tau}$.
Using Exercise \ref{jte}, show that these functions are related to  $\theta(x;p)$ by
\begin{align*}\theta_1(z|\tau)&=\ti q^{1/4}e^{-\ti z}(q^2;q^2)_\infty\theta(e^{2\ti z};q^2),\\
\theta_2(z|\tau)&=q^{1/4}e^{-\ti z}(q^2;q^2)_\infty\theta(-e^{2\ti z};q^2),\\
\theta_3(z|\tau)&=(q^2;q^2)_\infty\theta(-q e^{2\ti z};q^2)_\infty,\\
\theta_4(z|\tau)&=(q^2;q^2)_\infty\theta(qe^{2\ti z};q^2)_\infty.
\end{align*}
\end{problem}

\begin{problem}\label{scde}
Fixing $\tau$ in the upper half-plane and $q=e^{\ti\pi\tau}$, let 
$$k=\frac{\theta_2(0|\tau)^2}{\theta_3(0|\tau)^2}=4q^{1/2}\frac{(-q^2;q^2)_\infty^4}{(-q;q^2)_\infty^4}.$$ 
The parameter $k$ is called the \emph{modulus} in Jacobi's theory of elliptic functions.\footnote{Note that $k$ is related to the lambda invariant of Exercise \ref{ele} by $\lambda=k^2$ (where $p=q^2$).}
This theory is based on the functions
\begin{align*}
\sn(u,k)&=\frac{\theta_3(0|\tau)\theta_1(z|\tau)}{\theta_2(0|\tau)\theta_4(z|\tau)},\\
\cn(u,k)&=\frac{\theta_4(0|\tau)\theta_2(z|\tau)}{\theta_2(0|\tau)\theta_4(z|\tau)},\\
\dn(u,k)&=\frac{\theta_4(0|\tau)\theta_3(z|\tau)}{\theta_3(0|\tau)\theta_4(z|\tau)},
\end{align*}
where $k$ is as above and $u=\theta_3(0|\tau)^2 z$. Prove that the trigonometric limits of these functions are
$$\sn(u,0)=\sin(u),\qquad \cn(u,0)=\cos(u),\qquad \dn(u,0)=1. $$
Using, for instance, Weierstrass's identity \eqref{ttr}, show that 
$$\sn(u,k)^2+\cn(u,k)^2=k^2\sn(u,k)^2+\dn(u,k)^2=1.$$
Note that 
$$\sn(u,k)^2=C\frac{\theta(e^{\pm 2i z};q^2)}{\theta(qe^{\pm 2iz};q^2)}$$
is of the form \eqref{xe} up to a change of variables. As in Exercise \ref{ele}, show that $y(u)=\sn(u,k)^2$ satisfies
the differential equation
$$(y')^2=4y(1-y)(1-k^2y).$$
Deduce that\footnote{At least formally, it follows that the inverse of $u\mapsto \sn(u,k)$ is given by
$$\sn^{-1}(x)=\int_0^x\frac{dt}{\sqrt{(1-t^2)(1-k^2t^2)}}. $$
This is known as an elliptic integral of the first kind. The related elliptic integral of the second kind,
$$\int_0^x{\sqrt{\frac{1-k^2t^2}{1-t^2}}}\,dt, $$
appears when one tries to compute the arc-length of an ellipse. This is the somewhat far-fetched historical reason for the terminology "elliptic function".
}
$$\frac d{du}\sn(u,k)=\sqrt{(1-\sn(u,k)^2)(1-k^2\sn(u,k)^2)}=\cn(u,k)\dn(u,k). $$
\end{problem}

\chapter{Elliptic hypergeometric functions}\label{ehc}

\section{Three levels of hypergeometry}\label{his}

A \emph{classical hypergeometric series} is a series $\sum_{k}c_k$ such that $c_{k+1}/c_k$ is a rational function of $k$. 
Examples include all the standard Taylor series encountered in calculus. For instance,
\begin{equation}\label{ts}e^z=\sum_{k=0}^\infty \frac{z^k}{k!},\qquad \sin z=\sum_{k=0}^\infty\frac{(-1)^k z^{2k+1}}{(2k+1)!},\qquad
\arctan z =\sum_{k=0}^\infty\frac{(-1)^kz^{2k+1}}{2k+1} \end{equation}
have  termwise ratio
$$\frac{z}{k+1},\qquad -\frac{z^2}{(2k+2)(2k+3)},
\qquad-\frac{(2k+1)z^2}{2k+3},
  $$
  respectively.
Hypergeometric series can also be finite; an example is the binomial sum
$$\sum_{k=0}^n\binom nk z^k, $$
with termwise ratio
\begin{equation}\label{tre}\frac{(n-k)z}{k+1}. \end{equation}
This also holds at the boundary of the summation range in the sense that, 
if we define $c_{n+1}=c_{-1}=0$, then the ratio
$c_{k+1}/c_k$ vanishes for $k=n$ and is infinite for $k=-1$, in agreement with \eqref{tre}.

If $\sum_k c_k$ is classical hypergeometric, we can factor the termwise quotient as
\begin{equation}\label{tqf}f(k)=\frac{c_{k+1}}{c_k}=z\frac{(a_1+k)\dotsm(a_r+k)}{(b_1+k)\dotsm (b_{s+1}+k)}. \end{equation}
As was just explained,
if we want to consider sums supported on $k\geq 0$, it is natural to assume that $f$ has a pole at $-1$. Thus, we take $b_{s+1}=1$. This is no restriction, as we  recover the general case if in addition $a_r=1$. Iterating \eqref{tqf} then gives
$$c_k=c_0\frac{(a_1)_k\dotsm(a_r)_k}{k!(b_1)_k\dotsm (b_s)_k}\,z^k, $$
where
$$(a)_k=a(a+1)\dotsm(a+k-1).$$ 
Thus, any classical hypergeometric series, supported on $k\geq 0$, is a constant multiple of
$${}_rF_s\left(\begin{matrix}a_1,\dots,a_r\\ b_1,\dots,b_s\end{matrix};z\right)=\sum_{k=0}^\infty\frac{(a_1)_k\dotsm(a_r)_k}{k!(b_1)_k\dotsm (b_s)_k}\,z^k.  $$
If $a_r=-n$ is a non-negative integer, this reduces to 
$${}_rF_s\left(\begin{matrix}a_1,\dots,a_{r-1},-n\\ b_1,\dots,b_s\end{matrix};z\right)=\sum_{k=0}^n\frac{(a_1)_k\dotsm(a_{r-1})_k(-n)_k}{k!(b_1)_k\dotsm (b_s)_k}\,z^k,  $$
which is the general form of a finite hypergeometric sum.

In the 19-th century, it became apparent that there is a  natural generalization of classical hypergeometric series called
\emph{basic hypergeometric series}. For these, the ratio $c_{k+1}/c_k$ is a rational function of $q^k$ for some fixed $q$ (known as the base).
If we write $q=e^{2\ti\pi\eta}$, the quotient can be expressed as a trigonometric function of $\eta$.
Thus, another possible name would be ``trigonometric hypergeometric series''.

In  the late 1980's, mathematical physicists discovered the even more general  \emph{elliptic hypergeometric series} while studying Baxter's elliptic solid-on-solid model \cite{d}. For these, the termwise ratio $c_{k+1}/c_k$ is an additively elliptic function of $k$. Partly because of convergence issues, the theory of infinite elliptic hypergeometric series is not well developped, so we will focus on finite sums. If one wants to consider elliptic extensions of infinite series, then it is usually  better to define them by integrals, see \S \ref{eis}.

The restriction to finite sums causes a slight problem of terminology. Namely, given any finite sum
$\sum_{k=0}^nc_k$, one can obviously find a rational function $f$ assuming the finitely many values $f(k)=c_{k+1}/c_k$. 
The same is true for trigonometric and elliptic functions. So it would seem that \emph{all} finite sums are both hypergeometric, basic hypergeometric and elliptic hypergeometric. Although this is correct in principle, it is never a problem in practice. 
The sums we will consider 
 depend on parameters, including $n$, in a way which will make the
 elliptic nature of the termwise ratio obvious. In particular, the degree of the relevant elliptic function (number of zeroes and poles up to periodicity) is independent of $n$.

\begin{problem}
Express the Taylor series \eqref{ts} in hypergeometric notation.
\end{problem}

\section{Elliptic hypergeometric sums}

In this section, we discuss  elliptic hypergeometric sums in general. 
As we explain in \S \ref{wps}, this is \emph{not} a natural class of functions. Sums appearing  
in practice, because of relations to physical models and/or interesting mathematical properties, are of more restricted types, such as the
 very-well-poised sums defined in \S \ref{wps}.

We want to consider sums $\sum_{k=0}^n c_k$ such that $c_{k+1}/c_k=f(k)$ 
for some additively elliptic function $f$. We denote the periods $1/\eta$ and $\tau/\eta$
and introduce the parameters $p=e^{2\ti\pi\tau}$, $q=e^{2\ti\pi\eta}$. We may then write
 $f(k)=g(q^k)$, where $g(px)=g(x)$. As usual, we assume that $0<|p|<1$. 
To avoid some (potentially interesting) degenerate situations, we will in addition assume 
that $1$, $\eta$ and $\tau$ are linearly independent over $\mathbb Q$ or,
 equivalently,
\begin{equation}\label{ndpq}q^k\neq p^l,\qquad k,\,l\in \mathbb Z,\quad (k,l)\neq (0,0).\end{equation}

By Theorem \ref{eftt}, we can write
\begin{equation}\label{cr}f(k)=\frac{c_{k+1}}{c_k}=z\frac{\theta(a_1q^k,\dots,a_{m+1}q^k;p)}{\theta(b_1q^k,\dots,b_{m+1}q^k;p)},\qquad a_1\dotsm a_{m+1}=b_1\dotsm b_{m+1}.\end{equation}
Let us introduce the \emph{elliptic shifted factorial}
$$(a;q,p)_k=\prod_{j=0}^{k-1}\theta(aq^j;p), $$
for which we employ the condensed notation
$$(a_1,\dots,a_m;q,p)_k=(a_1;q,p)_k\dotsm(a_m;q,p)_k, $$
\begin{equation}\label{efpm}(ax^\pm;q,p)_k=(ax;q,p)_k(a/x;q,p)_k.\end{equation}
Then, iterating \eqref{cr} gives
$$c_k=c_0\frac{(a_1,\dots,a_{m+1};q,p)_k}{(b_1,\dots,b_{m+1};q,p)_k}\,z^k. $$
Similarly as for classical hypergeometric sums, it is natural to require that $f(n)=0$ and $f(-1)=\infty$. For this reason, we take $a_{m+1}=q^{-n}$ and $b_{m+1}=q$. This is no restriction, as we can recover the general case by choosing in addition $a_m=q$ and $b_m=q^{-n}$. 
We conclude that, up to an inessential prefactor, the general form of a finite elliptic hypergeometric sum is
\begin{equation}\label{gem}{}_{m+1}E_m\left(\begin{matrix}q^{-n},a_1,\dots,a_m\\b_1,\dots,b_m\end{matrix};q,p;z\right)=\sum_{k=0}^n\frac{(q^{-n},a_1,\dots,a_m;q,p)_k}{(q,b_1,\dots,b_m;q,p)_k}\,z^k, \end{equation}
where the parameters should satisfy the \emph{balancing condition}\footnote{The reader familiar with basic hypergeometric series may be uncomfortable with the position of $q$ in this identity. We will return to this point at the end of \S \ref{fts}.}
\begin{equation}\label{bc}q^{-n} a_1\dotsm a_{m}=q b_1\dotsm b_{m}.\end{equation}
Note that \eqref{ndpq} is equivalent to requiring that $(q;q,p)_k\neq 0$ for $k\geq 0$, so that the sum is well-defined for generic $b_j$.

\begin{problem}\label{epi}
Show that
\begin{align*}(a;q,p)_{n+k}&=(a;q,p)_n(aq^n;q,p)_k,\\
(a;q,p)_{n-k}&=\frac{(-1)^kq^{\binom k2}(a;q,p)_n}{(aq^{n-1})^k(q^{1-n}/a;q,p)_k},\\
(a;q,p)_k&=(-1)^ka^k q^{\binom k2}(q^{1-k}/a;q,p)_k,\\
(p^ma;q,p)_n&=\frac{(-1)^{mn}}{a^{mn}p^{n\binom m2} q^{m\binom n2}}\,(a;q,p)_n. \end{align*}
These identities are used routinely when manipulating elliptic hypergeometric series. 
\end{problem}

\begin{problem}
Show that 
\begin{multline*}{}_{m+1}E_m\left(\begin{matrix}q^{-n},a_1,\dots,a_m\\b_1,\dots,b_m\end{matrix};q,p;z\right)\\
=\frac{(-z)^n}{q^{\binom{n+1}2}}\frac{(a_1,\dots,a_m;q)_n}{(b_1,\dots,b_m;q)_n}\,{}_{m+1}E_m\left(\begin{matrix}q^{-n},q^{1-n}/b_1,\dots,q^{1-n}/b_m\\q^{1-n}/a_1,\dots,q^{1-n}/a_m\end{matrix};q,p;\frac 1z\right). \end{multline*}
\end{problem}

\begin{problem}\label{mhp}
Let $\left(\begin{smallmatrix}a&b\\c&d\end{smallmatrix}\right)\in\mathrm{SL}(2,\mathbb Z)$. Using \eqref{tmt}, prove that 
$${}_{m+1}E_m\left(\begin{matrix}q^{-n},a_1,\dots,a_m\\b_1,\dots,b_m\end{matrix};q,p;z\right)
=
{}_{m+1}E_m\left(\begin{matrix}\tilde q^{-n},\tilde a_1,\dots,\tilde a_m\\ \tilde b_1,\dots,\tilde b_m\end{matrix};\tilde q,\tilde p;\tilde z\right),$$
where the parameters are related by
\begin{align*}a_j&=e^{2\ti\pi \eta\alpha_j},& b_j&=e^{2\ti\pi\eta \beta_j}, & p&=e^{2\ti\pi\tau}, & q&=e^{2\ti\pi\eta},\\
\tilde a_j&=e^{2\ti\pi \eta\alpha_j/(c\tau+d)},& \tilde b_j&=e^{2\ti\pi\eta \beta_j/(c\tau+d)}, & \tilde p&=e^{2\ti\pi(a\tau+b)/(c\tau+d)}, & \tilde q&=e^{2\ti\pi\eta/(c\tau+d)},
\end{align*}
$$\tilde z=\exp\left(\frac{\ti \pi c\eta^2}{c\tau+d}\left(1-n^2+\sum_{j=1}^{m}\left(\beta_j^2-\alpha_j^2\right)\right) \right)z $$
and we assume\footnote{If in addition  
$n^2+\sum_{j}\alpha_j^2=1+\sum_{j}\beta_j^2,$
 then $\tilde z=z$. In this situation, ${}_{m+1}E_m$
  is known as a \emph{modular} hypergeometric sum.} (without loss of generality in view of \eqref{bc})
$$-n+\sum_{j=1}^m \alpha_j=1+\sum_{j=1}^m\beta_j.$$
\end{problem}

\section{The Frenkel--Turaev sum}\label{fts}

Disregarding for a moment the condition \eqref{bc}, the case $p=0$ of \eqref{gem} is the basic hypergeometric sum
\begin{equation}\label{bhs}
{}_{m+1}\phi_m\left(\begin{matrix}q^{-n},a_1,\dots,a_m\\b_1,\dots,b_m\end{matrix};q;z\right)=\sum_{k=0}^n\frac{(q^{-n},a_1,\dots,a_m;q)_k}{(q,b_1,\dots,b_m;q)_k}\,z^k, \end{equation}
where
$$(a;q)_k=(a;q,0)_k=(1-a)(1-aq)\dotsm (1-a q^{k-1}).  $$
Browsing the standard textbook \cite{gr}, one will find a large (perhaps overwhelming) number of identities for such sums, such as  the $q$-Saalsch\"utz summation
\begin{equation}\label{qs}{}_3\phi_2 \left(\begin{matrix}q^{-n},a,b\\c,abc^{-1}q^{1-n}\end{matrix};q;q\right)
=\frac{(c/a,c/b;q)_n}{(c,c/ab;q)_n}\end{equation}
and the Jackson summation
\begin{multline}\label{js}{}_8\phi_7 \left(\begin{matrix}a,q\sqrt a,-q\sqrt a,q^{-n},b,c,d,e\\ \sqrt a,-\sqrt a,aq^{n+1},aq/b,aq/c,aq/d,aq/e\end{matrix};q;q\right)\\
=\frac{(aq,aq/bc,aq/bd,aq/cd;q)_n}{(aq/b,aq/c,aq/d,aq/bcd;q)_n}, \qquad a^2q^{n+1}=bcde.\end{multline}
If we let $a$, $d$ and $e$ tend to zero in \eqref{js}, in such a way that 
$a/d$ and $a/e$ is fixed, we recover \eqref{qs}. 
In fact, \eqref{js} is a top result in a hierarchy of  summations, containing \eqref{qs} and many
other results as limit cases. 
In \cite{gr}, the theory of basic hypergeometric series is built from bottom up, starting with
the simplest results like the $q$-binomial theorem, using these to prove intermediate results like \eqref{qs} and eventually proceeding to \eqref{js} and beyond. For elliptic hypergeometric series, such an approach is impossible
since, as a rule of thumb, only ``top level results'' extend to the elliptic level.
This may  be the reason why it took so long before elliptic hypergeometric functions were discovered.

Let us return to \eqref{qs} and explain why it does not exist at the elliptic level. 
As is emphasized in Ismail's lectures at the present summer school \cite{i}, it is natural to view \eqref{qs} as a connection formula for the ``Askey--Wilson monomials''
$$\phi_n(y;a)=(a x,a/x;q)_n,\qquad y=\frac{x+x^{-1}}2 $$
(equivalently, $x=e^{\ti\theta}$ and $y=\cos\theta$).
Indeed, if we replace $a$ by $ax$ and $b$ by $a/x$,  \eqref{qs} takes the form
$$\phi_n(y;c/a)=\sum_{k=0}^n C_k\,\phi_k(y;a).$$
One might try to obtain an elliptic extension by considering
$$(cx^\pm/a;q,p)_n=\sum_{k=0}^n C_k (ax^\pm;q,p)_k. $$
However, such an expansion cannot exist, as the terms satisfy different quasi-periodicity conditions.
For instance, the case $n=1$ can be written 
$$C_0=\theta(cx^\pm/a;p)-C_1\theta(ax^\pm;p). $$
Here, 
both terms on the right satisfy $f(px)=f(x)/px^2$, wheras the left-hand side is independent of $x$. This  is absurd (except in the trivial
cases $c\in p^\mathbb Z$ and $c\in a^2 p^\mathbb Z$, when we may take $C_0=0$).

Let us now consider \eqref{js} as a connection formula. Replacing  $b$ by $bx$ and  $c$ by $b/x$, it can be written as
$$(aqx^\pm/bd;q)_n=\sum_{k=0}^n C_k (bx^\pm;q)_k(q^{-n}bx^\pm/a;q)_{n-k}. $$
This seems  more promising to extend to the elliptic level. Changing parameters, we will consider the expansion problem
\begin{equation}\label{ebt}h_n(x;a)=\sum_{k=0}^n C_k^n\, h_k(x;b)h_{n-k}(x;c), \end{equation}
where
$h_n(x;a)=(ax^\pm;q,p)_n$.
Here, each term  satisfies the same quasi-perodicity relation. 
Following \cite{res}, we will 
compute the coefficients in \eqref{ebt} by induction on $n$, in analogy with a standard proof of the binomial theorem. 

When $n=0$,  \eqref{ebt} holds trivially with $C_0^0=1$. Assuming that \eqref{ebt} holds for fixed $n$, we write
$$h_{n+1}(x;a)=\theta(aq^n x^\pm;p)\sum_{k=0}^n C_k^n h_k(x;b)h_{n-k}(x;c). $$
To proceed, we must split
$$\theta(aq^nx^\pm;p)=B\theta(bq^kx^\pm;p)+C\theta(cq^{n-k}x^\pm;p). $$
By \eqref{ttr}, such a splitting exists for generic parameters and is given by
$$B=\frac{\theta(acq^{2n-k},aq^k/c;p)}{\theta(bcq^n,bq^{2k-n}/c;p)},\qquad C=-\frac{bq^{2k-n}\theta(abq^{n+k},aq^{n-k}/b;p)}{c\,\theta(bcq^n,bq^{2k-n}/c;p)} $$
(note that we do not need to remember the exact form of \eqref{ttr}; we can simply compute $B$ and $C$ by substituting  $x=cq^{n-k}$ and $x=bq^k$). 
Then, \eqref{ebt} holds with $n$ replaced by $n+1$ and 
\begin{equation}\label{ept}C_{k}^{n+1}=\frac{\theta(acq^{2n-k+1},aq^{k-1}/c;p)}{\theta(bcq^n,bq^{2k-2-n}/c;p)}\,C_{k-1}^n -\frac{bq^{2k-n}\theta(abq^{n+k},aq^{n-k}/b;p)}{c\,\theta(bcq^n,bq^{2k-n}/c;p)}\,C_k^n.\end{equation}
This is an elliptic extension of Pascal's triangle.
We claim that the solution is the ``elliptic binomial coefficient''
\begin{equation}\label{ebc}C_k^n=q^{n-k}\frac{\theta(q^{2k-n}b/c;p)(q,ab,ac;q,p)_n(a/c;q,p)_k(a/b;q,p)_{n-k}}{\theta(b/c;p)(bc;q,p)_n(q,ab,qb/c;q,p)_k(q,ac,qc/b;q,p)_{n-k}}. \end{equation}
Indeed, plugging \eqref{ebc} into \eqref{ept} we obtain after cancellation 
\begin{multline*}\theta(q^{n+1},abq^n,acq^n,q^{2k-n-1}b/c;p)\\
=\theta(q^k,abq^{k-1},acq^{2n-k+1},q^kb/c;p)+q^k\theta(q^{n+1-k},abq^{n+k},acq^{n-k},q^{k-n-1}b/c;p). 
\end{multline*}
The reader should check that this is again an instance of \eqref{ttr}.

Plugging \eqref{ebc} into \eqref{ebt}, simplifying and changing parameters yields the following result, which is  the most fundamental and important fact about elliptic hypergeometric sums.
 It was first obtained by Frenkel and Turaev \cite{ft}, but with additional restrictions on the parameters it occurs somewhat implicitly in \cite{d}.

 \begin{theorem}\label{ftit}
 When $ a^2q^{n+1}=bcde$,
 we have the summation formula
\begin{multline}\label{ftsi}
\sum_{k=0}^n\frac{\theta(aq^{2k};p)}{\theta(a;p)}\frac{(a,q^{-n},b,c,d,e;q,p)_k}{(q,aq^{n+1},aq/b,aq/c,aq/d,aq/e;q,p)_k}\,q^k\\
\begin{split}&={}_{10}E_9 \left(\begin{matrix}a,q\sqrt a,-q\sqrt a,q\sqrt{pa},-q\sqrt{pa},q^{-n},b,c,d,e\\ \sqrt a,-\sqrt a,\sqrt{pa},-\sqrt{pa},aq^{n+1},aq/b,aq/c,aq/d,aq/e\end{matrix};q,p;q\right)\\
&=\frac{(aq,aq/bc,aq/bd,aq/cd;q,p)_n}{(aq/b,aq/c,aq/d,aq/bcd;q,p)_n}.\end{split}\end{multline}
\end{theorem}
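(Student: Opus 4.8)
The plan is to read the summation \eqref{ftsi} directly off the connection formula \eqref{ebt} together with the explicit coefficients \eqref{ebc}, exactly as the phrase ``plugging \eqref{ebc} into \eqref{ebt}'' suggests. Thus I would \emph{not} reprove anything about the elliptic Pascal recurrence \eqref{ept}; I would treat \eqref{ebt}--\eqref{ebc} as established and regard the whole task as a change of variables turning an identity in the free variable $x$ into a single very-well-poised sum. The first point to record is that the parameter counts already match: \eqref{ebt} carries the four continuous parameters $a,b,c,x$ (besides $n$), while the ${}_{10}E_9$ in \eqref{ftsi} carries $a,b,c,d,e$ subject to the single balancing relation $a^2q^{n+1}=bcde$, again four continuous parameters. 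So no specialization of $x$ is needed: $x$ will simply become one of the Frenkel--Turaev parameters after renaming, and the balancing condition \eqref{bc} will come out automatically.

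The key manipulation is to collapse the two index ranges $k$ and $n-k$ in \eqref{ebt} into a single sum over $k$. Writing $(cx^\pm;q,p)_{n-k}=(cx;q,p)_{n-k}(c/x;q,p)_{n-k}$ and likewise expanding the $(n-k)$-indexed pieces $(a/b;q,p)_{n-k}$ and $(q,ac,qc/b;q,p)_{n-k}$ of \eqref{ebc}, I would apply the reflection identity of Exercise \ref{epi},
$$(a;q,p)_{n-k}=\frac{(-1)^k q^{\binom k2}(a;q,p)_n}{(aq^{n-1})^k\,(q^{1-n}/a;q,p)_k},$$
to every such factor. Each one becomes an $n$-indexed factorial, which is constant in $k$ and can be pulled out of the sum, times a $k$-indexed factorial with inverted argument; collecting the resulting powers of $q$ and signs leaves a clean sum $\sum_{k=0}^{n}(\text{$k$-indexed summand})$. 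The very-well-poised factor is produced by the prefactor $\theta(q^{2k-n}b/c;p)/\theta(b/c;p)$ of \eqref{ebc}: setting $A=q^{-n}b/c$ gives $\theta(q^{2k-n}b/c;p)=\theta(Aq^{2k};p)$, while quasi-periodicity \eqref{tqpi} rewrites $\theta(b/c;p)=\theta(Aq^n;p)$ as $\theta(A;p)$ times a $k$-independent factor, so that $\theta(Aq^{2k};p)/\theta(A;p)$ is exactly the balanced factor $\theta(aq^{2k};p)/\theta(a;p)$ of \eqref{ftsi} with the base $a$ identified with $A=q^{-n}b/c$.

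Having reduced \eqref{ebt} to a single sum, I would then rename parameters so that the summand matches \eqref{ftsi}: the base of the ${}_{10}E_9$ is $q^{-n}b/c$ (in terms of the parameters $b,c$ of \eqref{ebt}), the remaining numerator parameters are read off from $a,b,c,x$ and their inverses, and the balancing $a^2q^{n+1}=bcde$ falls out of the product constraint $a_1\dotsm a_{m+1}=b_1\dotsm b_{m+1}$ built into \eqref{cr}. Simultaneously the left-hand side $(ax^\pm;q,p)_n$ of \eqref{ebt}, divided by the $k$-independent prefactors accumulated above, collapses to the product $(aq,aq/bc,aq/bd,aq/cd;q,p)_n/(aq/b,aq/c,aq/d,aq/bcd;q,p)_n$ on the right of \eqref{ftsi}. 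As a sanity check, the limit $p\rightarrow 0$ reduces the entire computation to the classical derivation of Jackson's sum \eqref{js} from the $q$-binomial connection coefficients, following \cite{res}.

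The main obstacle is not conceptual but the bookkeeping in this reduction: one must track the powers of $q$, the signs $(-1)^k$, and the several $n$-indexed prefactors through the repeated applications of the reflection formula, and then recognize the surviving summand as a \emph{very-well-poised} ${}_{10}E_9$. Concretely, one matches the four numerator parameters $\{q\sqrt a,-q\sqrt a,q\sqrt{pa},-q\sqrt{pa}\}$ against the denominator parameters $\{\sqrt a,-\sqrt a,\sqrt{pa},-\sqrt{pa}\}$, which by the duplication formula \eqref{tdf} collapse to the single factor $\theta(aq^{2k};p)/\theta(a;p)$, while the lone $q^k$ is the leftover of the $q^{n-k}$ prefactor of \eqref{ebc} after reflection. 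Getting this matching right, and checking that the balancing condition emerges precisely as $a^2q^{n+1}=bcde$ rather than in a quasi-period-shifted variant, is where all the care lies; no genuinely new idea beyond Weierstrass's identity \eqref{ttr}, already used to establish \eqref{ebc}, is required.
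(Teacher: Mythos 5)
Your proposal is correct and coincides with the paper's own proof: the paper obtains Theorem \ref{ftit} precisely by plugging the elliptic binomial coefficients \eqref{ebc} into the expansion \eqref{ebt}, then ``simplifying and changing parameters,'' which is exactly the bookkeeping you carry out (reflection of the $(n-k)$-indexed factorials via Exercise \ref{epi}, emergence of the very-well-poised factor with base $q^{-n}b/c$, the leftover $q^k$, and the automatic balancing). The details you supply, including the identification of the four numerator/denominator pairs collapsing to $\theta(aq^{2k};p)/\theta(a;p)$ as in \eqref{vwpf}, are accurate and fill in the simplification the paper leaves to the reader.
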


Here, we used Exercise \ref{tde} to write
\begin{align}\nonumber\frac{\theta(aq^{2k};p)}{\theta(a;p)}&=\frac{\theta(\sqrt a q^k,-\sqrt a q^k,\sqrt{pa} q^k,-\sqrt{pa} q^k;p)}{\theta(\sqrt a,-\sqrt a,\sqrt{pa},-\sqrt{pa};p)}\\
 \label{vwpf}&=\frac{(q\sqrt a ,-q\sqrt a ,q\sqrt{pa} ,-q\sqrt{pa} ;p,q)_k}{(\sqrt a,-\sqrt a,\sqrt{pa},-\sqrt{pa};p,q)_k}.
 \end{align}
 Note that the case $p=0$, 
$$\frac{1-aq^{2k}}{1-a}=\frac{(q\sqrt a,-q\sqrt a;q)_k}{(\sqrt a,-\sqrt a;q)_k}$$
contains two factors instead of four, so the ${}_{10}E_9$ reduces to the ${}_8\phi_7$ sum in \eqref{js}. This  explains an apparent discrepancy between 
the balancing conditions for basic and elliptic hypergeometric series. Namely,
the balancing condition for  \eqref{gem} is
$q^{-n} a_1\dotsm a_{m}=qb_1\dotsm b_{m}$, whereas \eqref{bhs} is called balanced if
$q^{1-n} a_1\dotsm a_{m}=b_1\dotsm b_{m}$. With these definitions, the left-hand sides of
\eqref{js} and \eqref{ftsi} are both balanced. The shift by $q^2$ comes from  the additional numerator parameters $\pm q\sqrt{pa} $ and denominator parameters $\pm\sqrt{pa}$, which become invisible in the trigonometric limit.

\begin{problem}
 Show that the case $n=1$ of \eqref{qs} is equivalent to the trigonometric case of \eqref{ta} and the case $n=1$ of \eqref{js} to the trigonometric case of \eqref{na}. (This gives another explanation for why
 the Saalsch\"utz summation does not exist at the elliptic level.)
\end{problem}

\begin{problem}
In \eqref{ftsi}, multiply $a$, $d$ and $e$ by $\sqrt p$ and then let $p\rightarrow 0$. Show that you obtain \eqref{qs} in the limit. (In this sense, the Saalsch\"utz summation \emph{does} exist at the elliptic level.)
\end{problem}

\begin{problem}
As we have remarked, \eqref{js} contains \eqref{qs} as a limit case. Why can't we take the corresponding limit of \eqref{ftsi} when $p\neq 0$?
\end{problem}

\begin{problem}\label{lie}
Show that (assuming \eqref{ndpq}) the functions $(h_k(x;a)h_{n-k}(x;b))_{k=0}^n$ are linearly independent for generic $a$ and $b$. Combining this with the dimension count in Exercise \ref{pxe}, deduce \emph{a priori}
that the expansion \eqref{ebt} exists generically.
\end{problem}

\begin{problem} Show that 
$$\lim_{q\rightarrow 1}\frac{(q^{-n};q,p)_k}{(q;q,p)_k}=(-1)^k\binom nk $$
and verify that the limit $q\rightarrow 1$ of  \eqref{ftsi}, all other parameters being fixed, is equivalent to the classical binomial theorem.
\end{problem}

\section{Well-poised and very-well-poised sums}\label{wps}

In concrete examples, the interesting variable in \eqref{gem} is not $z$; indeed, it is usually fixed to $z=q$. It is more fruitful to consider
\eqref{gem} as a function of the parameters $a_j$ and $b_j$. When viewed in this way, \eqref{gem} is not very natural,
as the terms have different quasi-periodicity. For instance, consider the behaviour under the simultaneous shift $a_1\mapsto pa_1$, $b_1\mapsto pb_1$ (which preserves \eqref{bc}). By 
 Exercise \ref{epi}, the $k$-th term in \eqref{gem} is then multiplied by $(b_1/a_1)^k$, which in general depends on $k$.
 By contrast, it is easy to see that each term in the sum
 $$\sum_{k=0}^n\frac{(x_0x_1,-x_0/x_1;q,p)_k}{(-x_0x_1,x_0/x_1;q,p)_k}\,z^k $$
 is invariant under either of the transformations $x_0\mapsto px_0$ and $x_1\mapsto px_1$.
 This is an example of what  Spiridonov \cite{st} calls a \emph{totally elliptic} hypergeometric sum.
 To describe all such 
 sums explicitly seems to be an open problem.
 We will be content with giving an interesting special case.
 
 \begin{proposition}\label{wl}
 The sum 
 \begin{equation}\label{ws}\sum_k\frac{(x_0x_1,\dots,x_0x_{m+1};q,p)_k}{(x_0/x_1,\dots,x_0/x_{m+1};q,p)_k}\,z^k, \end{equation}
 subject to the balancing condition $x_1^2\dotsm x_{m+1}^2=1$, is totally elliptic in the sense that each term is invariant under the simultaneous shifts
 $x_j\mapsto p^{\alpha_j}x_j$, where $\alpha_0,\dots,\alpha_{m+1}$ are integers subject to $\alpha_1+\dots+\alpha_{m+1}=0$. 
 \end{proposition}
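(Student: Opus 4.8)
The plan is to verify directly that the $k$-th summand of \eqref{ws} is unchanged under each admissible shift $x_j\mapsto p^{\alpha_j}x_j$, by tracking how every elliptic shifted factorial transforms. The only tool I need is the last identity of Exercise \ref{epi}, which, by the quasi-periodicity \eqref{tqpi}, holds for every integer exponent: $(p^\alpha a;q,p)_k=(-1)^{\alpha k}a^{-\alpha k}p^{-k\binom\alpha2}q^{-\alpha\binom k2}(a;q,p)_k$. Before starting I would note that the shifts preserve the balancing condition, since $\prod_{j}(p^{\alpha_j}x_j)^2=p^{2\sum_j\alpha_j}\prod_j x_j^2=\prod_j x_j^2$ because $\sum_j\alpha_j=0$, and that the factor $z^k$ is untouched.

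First I would apply the displayed identity to each factor. Under the shift, a numerator parameter $x_0x_j$ becomes $p^{\alpha_0+\alpha_j}x_0x_j$ and a denominator parameter $x_0/x_j$ becomes $p^{\alpha_0-\alpha_j}x_0/x_j$. Writing $M_k$ for the resulting overall multiplier of the $k$-th term and collecting the contributions of the $m+1$ numerator and $m+1$ denominator factors, the sign contributions pair up as $(-1)^{(\alpha_0+\alpha_j)k-(\alpha_0-\alpha_j)k}=(-1)^{2\alpha_jk}=1$, so $M_k$ carries no sign. The remaining bookkeeping splits into powers of $q$, $p$, $x_0$ and the individual $x_j$.

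The one computation needing care is the $p$-exponent, which for each $j$ equals $k\big(\binom{\alpha_0-\alpha_j}2-\binom{\alpha_0+\alpha_j}2\big)=k(-2\alpha_0\alpha_j+\alpha_j)$; the analogous $q$-exponent is $-2\alpha_j\binom k2$, the $x_0$-exponent is $-2\alpha_jk$, and the $x_j$-exponent is $-2\alpha_0k$. Summing over $j=1,\dots,m+1$ and invoking $\sum_j\alpha_j=0$ annihilates the total $q$-power, $p$-power and $x_0$-power outright. The only survivor is $\prod_j x_j^{-2\alpha_0k}=\big(\prod_j x_j^2\big)^{-\alpha_0k}$, which equals $1$ by the balancing condition $x_1^2\dotsm x_{m+1}^2=1$. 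Hence $M_k=1$ and each term is invariant.

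The proof is thus entirely mechanical once the exponent algebra is set up; the only genuine obstacle is organizing the bookkeeping so that the two constraints, namely $\sum_j\alpha_j=0$ from admissibility of the shift and $\prod_j x_j^2=1$ from balancing, enter at exactly the right places. It is worth remarking that these are precisely the two hypotheses of the proposition, which also explains why both are needed: dropping $\sum_j\alpha_j=0$ would revive the $q$-, $p$- and $x_0$-powers, while dropping the balancing condition would leave the residual $x_j$-power.
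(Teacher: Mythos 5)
Your proof is correct and is essentially the paper's own argument: both apply the last identity of Exercise \ref{epi} to each shifted factor $(p^{\alpha_0\pm\alpha_j}x_0x_j^{\pm1};q,p)_k$, collect the resulting powers of $-1$, $p$, $q$, $x_0$ and $x_j$, and observe that $\sum_{j\geq 1}\alpha_j=0$ kills everything except $\prod_j x_j^{-2\alpha_0 k}$, which the balancing condition eliminates. Your exponent bookkeeping (e.g.\ $\binom{\alpha_0-\alpha_j}{2}-\binom{\alpha_0+\alpha_j}{2}=\alpha_j(1-2\alpha_0)$) matches the paper's displayed computation exactly.
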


 \begin{proof}
 By Exercise \ref{epi}, under such a shift the $k$-th term is multiplied by
 \begin{multline*}
 \prod_{j=1}^{m+1}\frac{(-1)^{\alpha_0-\alpha_j}(x_0/x_j)^{\alpha_0-\alpha_j}p^{k\binom{\alpha_0-\alpha_j}2}q^{(\alpha_0-\alpha_j)\binom k2}}{(-1)^{\alpha_0+\alpha_j}(x_0x_j)^{\alpha_0+\alpha_j}p^{k\binom{\alpha_0+\alpha_j}2}q^{(\alpha_0+\alpha_j)\binom k2}}\\
 =\prod_{j=1}^{m+1}\frac{1}{x_0^{2\alpha_j}x_j^{2\alpha_0}p^{k(2\alpha_0-1)\alpha_j}q^{2\alpha_j\binom k2}}=1.
 \end{multline*}
 \end{proof}
 
 Following \cite{st},
 we call  \eqref{ws} a \emph{well-poised} elliptic hypergeometric sum. Note that convergence is not an issue, as Proposition \ref{wl}
 is just a statement about the individual terms. 
 As a first step towards considering finite sums, suppose that the summation is
 over non-negative integers and that
  $x_0/x_{m+1}=q$.  
 Writing $a=x_0^2/q$ and $b_j=x_jx_0$ for $j=1,\dots,m$ we arrive at the series
 \begin{equation}\label{nwp}\sum_{k=0}^\infty\frac{(a,b_1,\dots,b_{m};q,p)_k}{(q,aq/b_1,\dots,aq/b_{m};q,p)_k}\,z^k, 
 \end{equation}
 where the balancing condition is $b_1^2\dotsm b_{m}^2=q^{m+1}a^{m-1}$.  
 
The Frenkel--Turaev sum \eqref{ftsi} is of the form \eqref{nwp}. Indeed,  the factor \eqref{vwpf} can alternatively be written 
 \begin{equation}\label{vwpa}\frac{\theta(aq^{2k};p)}{\theta(a;p)}=\frac{(-1)^k}{q^k}\frac{(q\sqrt a,-q\sqrt a,q\sqrt{a/p},-q\sqrt{ap};p,q)_k}{(\sqrt a,-\sqrt a,\sqrt{ap},-\sqrt{a/p};p,q)_k},\end{equation}
 which looks less symmetric but fits the well-poised pattern.
 Well-poised series containing the factor \eqref{vwpa} are called \emph{very-well-poised}. Clearly, any well-poised series can be considered as very-well-poised, since one can 
 artificially introduce this factor and then remove it again  by choosing $(b_1,\dots,b_4)=(\sqrt a,-\sqrt a,\sqrt{ap},-\sqrt{a/p})$. Typically, the $z$-variable in a very-well-poised series is fixed to $q$. 
 We will therefore use the notation
 $${}_{m+1}V_m(a;b_1,\dots,b_{m-4};q,p)=\sum_{k=0}^\infty\frac{\theta(aq^{2k};p)}{\theta(a;p)}\frac{(a,b_1,\dots,b_{m-4};q,p)_k}{(q,aq/b_1,\dots,aq/b_{m-4};q)_k}\,q^k.$$
 The balancing condition for this series is 
 $$b_1^2\dotsm b_{m-4}^2=q^{m-7}a^{m-5}. $$
As before, because of convergence problems one usually restricts to the case when $b_{m-4}= q^{-n}$, when $n$ is a non-negative integer.

\begin{problem}
Show that the sum
$$\sum_k\frac{(x_1^2,\dots,x_m^2;q,p)_k}{(-x_1^2,\dots,-x_m^2;q,p)_k}\,z^k $$
is totally elliptic in the sense of being invariant under any shift $x_j\mapsto px_j$. (This shows that not all totally elliptic series are well-poised.)
\end{problem}

\begin{problem}
Show that the well-poised series \eqref{ws} is modular in the sense explained in the footnote to Exercise \ref{mhp}.
\end{problem}

\begin{problem}
By reversing the order of summation, show that
$${}_{m+1}V_m(q^{-n};b_1,\dots,b_{m-4};q,p)=0 $$
if $n$ is even and the balancing condition holds.
\end{problem}

\section{The sum ${}_{12}V_{11}$}\label{tves}

Historically, the first examples of elliptic hypergeometric functions were sums of the form ${}_{12}V_{11}$ (see \cite{d} and \S \ref{fhs}).
Again following \cite{res},
we will see how such sums appear from a natural generalization of the expansion \eqref{ebt}.

Let us write
$C_k^n(a,b,c) $
for the coefficients (depending also on $p$ and $q$) given in \eqref{ebc} and appearing in the expansion
\begin{equation}\label{ebe}h_n(x;a)=\sum_{k=0}^nC_k^n(a,b,c)h_k(x;b)h_{n-k}(x;c). \end{equation} 
We will study more general coefficients
$R_k^l(a,b,c,d;n) $
appearing in the expansion 
\begin{equation}\label{ere}h_k(x;a)h_{n-k}(x;b)=\sum_{l=0}^n R_k^l(a,b,c,d;n) h_l(x;c)h_{n-l}(x;d).\end{equation}
By Exercise \ref{lie}, these coefficients exist uniquely for generic parameter values.

One can obtain explicit expressions for $R_k^l$ by iterating \eqref{ebe}. For instance, writing
\begin{multline*}h_k(x;a)h_{n-k}(x;b)
=\sum_{j=0}^k C^k_j(a,c,bq^{n-k})\,
h_j(x;c)h_{n-j}(x;b)\\
=\sum_{j=0}^k\sum_{m=0}^{n-j}
C^k_j(a,c,bq^{n-k}) C_{m}^{n-j}(b,cq^j,d)\,h_{j+m}(x;c)h_{n-j-m}(x;d)
\end{multline*}
gives
 $$R_k^l(a,b,c,d;n)=\sum_{j=0}^{\min(k,l)}C_j^k(a,c,bq^{n-k})C_{l-j}^{n-j}(b,cq^j,d). $$
Plugging in the expression
\eqref{ebc} and simplifying
one finds that 
\begin{multline}\label{rkl}
R_k^l(a,b,c,d;n)\\
\begin{split}&=q^{l(l-n)}\frac{(q;q,p)_n(ac,a/c;q,p)_k(q^{n-l}bd,b/d;q,p)_l
(bc,b/c;q,p)_{n-k}(b/c;q,p)_{n-l}}{(cd,b/c;q,p)_n(q,bc,q^{l-n}c/d;q,p)_l(q,q^{-l}d/c;q,p)_{n-l}}\\
&\quad
\times{}_{12}V_{11}(q^{-n}c/b;q^{-k},q^{-l},q^{k-n}a/b,q^{l-n}c/d,cd,q^{1-n}/ab,
qc/b;q,p).
\end{split}\end{multline}

The uniqueness of the expansion  \eqref{ere}  implies many non-trivial properties of ${}_{12}V_{11}$-sums.
For instance, it follows that
$$R_k^l(a,b,c,d;n)=R_{n-k}^l(b,a,c,d;n). $$
Writing this out explicitly, we get the  identity
\begin{multline}\label{pbt}
{}_{12}V_{11}(q^{-n}c/b;q^{-k},q^{-l},q^{k-n}a/b,q^{l-n}c/d,cd,q^{1-n}/ab,
qc/b;q,p)\\
\begin{split}&=\frac{(a/d,q^{n-l}ad,bc,q^{n-l}b/c;q,p)_l}{(b/d,q^{n-l}bd,ac,q^{n-l}a/c;q,p)_l}\\
&\quad\times{}_{12}V_{11}(q^{-n}c/a;q^{k-n},q^{-l},q^{-k}b/a,q^{l-n}c/d,cd,q^{1-n}/ab,
qc/a;q,p).
\end{split}\end{multline}
Note that the left-hand side is a sum supported on $[0,\min(k,l)]$ whereas the right-hand side is supported
on $[0,\min(n-k,l)]$. Thus, both sides still terminate if $k$ and $n$ are replaced by continuous parameters, as long as
$l$ is a non-negative integer. It is natural to ask whether \eqref{pbt} still holds in that case. Indeed, it does, which is
the content of the following result due to Frenkel and Turaev \cite{ft} in general and to Date et al.\ \cite{d} under some additional restrictions on the parameters.
As the case $p=0$ is due to Bailey, it is called the elliptic Bailey transformation.

\begin{theorem}\label{btp} Let $n$ be a non-negative integer, $bcdefg=q^{n+2}a^3$
and $\lambda=a^2q/bcd$. Then,
\begin{multline}\label{bt}{}_{12}V_{11}(a;b,c,d,e,f,g,q^{-n};q,p)=\frac{(aq,aq/ef,\lambda q/e,\lambda q/f;q,p)_n}{(aq/e,aq/f,\lambda q,\lambda q/ef;q,p_n}\\
\times {}_{12}V_{11}(\lambda;\lambda b/a,\lambda c/a,\lambda d/a,e,f,g,q^{-n};q,p). 
\end{multline}
\end{theorem}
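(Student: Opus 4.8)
The plan is to derive Theorem~\ref{btp} from identity \eqref{pbt}, which we have already established for all \emph{non-negative integers} $k$ and $n$ (with $l$ a non-negative integer) via the uniqueness of the expansion \eqref{ere} together with the symmetry $R_k^l(a,b,c,d;n)=R_{n-k}^l(b,a,c,d;n)$ obtained by interchanging $a$ and $b$. The whole content of the theorem is that \eqref{pbt} persists when $k$ and $n$ are promoted to \emph{continuous} parameters, with $l$ kept integral; once this is shown, a change of variables turns \eqref{pbt} into \eqref{bt}.

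First I would fix $l$ and $n$ as non-negative integers and regard both sides of \eqref{pbt} as functions of the single variable $u=q^{-k}$, all remaining parameters being generic. Because the factor $(q^{-l};q,p)_j$ truncates the left-hand sum at $j=l$, each side is a \emph{finite} sum of ratios of elliptic shifted factorials in $u$ (note that $q^{k-n}a/b=q^{-n}a/(bu)$, so only $u$ and $u^{-1}$ enter, both harmless). Multiplying through by a common product of theta functions $H(u)$ that clears all denominators and is nonzero at $u\in q^{\mathbb{Z}_{\le 0}}$, the difference of the two sides becomes a function $G(u)$ that is analytic on $\mathbb{C}^\ast$ and satisfies a quasi-periodicity relation $G(pu)=t\,u^{-N}G(u)$ for a suitable $t$ and a fixed $N\ge 0$; this is verified using \eqref{tqp}--\eqref{tqpi} and the balancing condition, which holds identically in $u$. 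By Corollary~\ref{tfc}, $G$ is then a higher-order theta function with at most $N$ zeroes per period annulus. But the integer case of \eqref{pbt} gives $G(q^{-k})=0$ for every $k\in\mathbb{Z}_{\ge 0}$, and by \eqref{ndpq} these points are pairwise incongruent modulo $p^{\mathbb{Z}}$; having infinitely many incongruent zeroes forces $G\equiv 0$. Hence \eqref{pbt} holds for all $u$, i.e.\ for continuous $k$.

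Next I would repeat this argument in the variable $v=q^{-n}$. For each fixed non-negative integer $n$ the identity now holds at continuous $k$, so with $k$ (continuous) and $l$ (integral) fixed, the two sides of \eqref{pbt} agree at $v=q^{-n}$ for every $n\in\mathbb{Z}_{\ge 0}$. Both sides, including the prefactor $(a/d,q^{n-l}ad,bc,q^{n-l}b/c;q,p)_l$ over its denominator, are again built from theta functions in $v$ and $v^{-1}$, so the same order count via Corollary~\ref{tfc} and \eqref{ndpq} upgrades the identity to continuous $v$. At this stage \eqref{pbt} is valid for continuous $k$ and $n$, with $l$ the only surviving integer.

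Finally I would recognize \eqref{pbt} as \eqref{bt} through a dictionary of parameters: the truncating parameter $q^{-l}$ plays the role of $q^{-n}$ in \eqref{bt} (so $n\leftrightarrow l$), the leading parameter and the six now-free parameters of the left-hand ${}_{12}V_{11}$ are matched to $a;b,c,d,e,f,g$, and one checks that the right-hand ${}_{12}V_{11}$ together with its prefactor reproduces those in \eqref{bt} with $\lambda=a^2q/bcd$; the balancing $bcdefg=q^{n+2}a^3$ is exactly the (identically satisfied) balancing of the ${}_{12}V_{11}$'s in \eqref{pbt}. The main obstacle here is not conceptual but \emph{bookkeeping}: one must confirm that after clearing denominators each side is genuinely a higher-order theta function of a \emph{definite} order $N$ in the continuation variable, so that the zero count legitimately applies, and then carry out the substitution carefully enough to match the two prefactors exactly. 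The continuation step is the crux, but it becomes routine once the order $N$ is pinned down.
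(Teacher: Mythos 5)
Your overall strategy---prove \eqref{bt} at integer lattice points via \eqref{pbt}, then upgrade to continuous parameters using termwise invariance under $p$-shifts of the continuation variable (a consequence of the well-poised structure and balancing, cf.\ Proposition \ref{wl}), a zero count via Corollary \ref{tfc}, and the incongruence of the points $q^{-k}$ modulo $p^{\mathbb Z}$ guaranteed by \eqref{ndpq}---is exactly the route of the paper's first proof, outlined in Exercise \ref{cbp}. (The paper also offers a genuinely different second proof, Exercise \ref{ibp}, which evaluates a double sum in two ways using the Frenkel--Turaev formula.) Your individual ingredients are correct, and the final ``dictionary'' step is indeed only bookkeeping, since the paper itself introduces \eqref{bt} as the statement that \eqref{pbt} persists for continuous $k$ and $n$.

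However, there is a genuine gap in your execution: the \emph{order} of the two continuations. Identity \eqref{pbt} is obtained from the symmetry $R_k^l(a,b,c,d;n)=R_{n-k}^l(b,a,c,d;n)$ of the connection coefficients in \eqref{ere}, and these coefficients (and in particular the index $n-k$) only make sense for $0\le k,l\le n$. Hence, for a \emph{fixed} integer $n$, the integer case gives $G(q^{-k})=0$ only for the finitely many values $0\le k\le n$, not ``for every $k\in\mathbb Z_{\ge 0}$'' as you assert; a nonzero theta function of order $N$ can perfectly well have $n+1$ incongruent zeroes when $n+1\le N$, so your first continuation step collapses. The repair is to continue in the opposite order: first fix the integers $k$ and $l$ and continue in $v=q^{-n}$, which is legitimate because \eqref{pbt} holds for \emph{all} integers $n\ge\max(k,l)$---infinitely many pairwise incongruent points $q^{-n}$; then, with $n$ continuous and generic and $l$ a fixed integer, the identity is available at $u=q^{-k}$ for all integers $k\ge 0$, and your argument in $u$ goes through verbatim. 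This is precisely the point of the constraint $N\ge\max(k,n)$ in Exercise \ref{cbp}: there the specialization is $b=q^{-k}$, $d=aq^{N+1}$, and one must continue first in $d$ (whose admissible discrete values are unbounded for fixed $k$ and $n$) and only afterwards in $b$. With the order of continuations swapped, your proof is correct and is essentially the paper's first proof, transplanted from the variables $(b,d)$ of \eqref{bt} to the discrete parameters of \eqref{pbt}.
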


In the following two exercises we give two proofs of Theorem \ref{btp}, one that uses \eqref{pbt} and one  that doesn't.

\begin{problem}\label{cbp}
Use \eqref{pbt} to show that \eqref{bt} holds if $b=q^{-k}$ and $d=aq^{N+1}$, where $k$ and $N$ are non-negative integers with $N\geq\max(k,n)$.
Then show that, after replacing $c$ by $c/bd$, both sides of \eqref{bt} are invariant under the independent substitutions $b\mapsto bp$ and $d\mapsto dp$. 
Finally, use analytic continuation to deduce \eqref{bt} in general.
\end{problem}

\begin{problem}\label{ibp}
Consider the double sum
\begin{multline*}\sum_{0\leq x\leq y\leq n}\frac{(a;q,p)_{x+y}(a/\lambda;q,p)_{y-x}}{(q\lambda;q,p)_{x+y}(q;q,p)_{y-x}}
\frac{\theta(\lambda q^{2x};p)}{\theta(\lambda;p)}\frac{(\lambda,\lambda b/a,\lambda c/a,\lambda d/a;q,p)_x}{(q,aq/b,aq/c,aq/d;q,p)_x}\left(\frac{aq}\lambda\right)^{x}\\
\times\frac{\theta(aq^{2y};p)}{\theta(a;p)}\frac{(q^{-n},e,f,g;q,p)_y}{(q^{n+1},aq/e,aq/f,aq/g;q,p)_y}\,q^{y}. 
\end{multline*}
Using \eqref{ftsi} to compute both the sum in $x$ and the sum in $y$, deduce \eqref{bt}.
\end{problem}

\begin{problem} By an argument similar to that in Exercise \ref{cbp}, deduce from Theorem \ref{btp} that, when $m\geq n$ are non-negative integers
and
 $efgh=a^2q^{1+n-m}$,
\begin{multline*}
{}_{12}V_{11}(a;q^{-n},c,aq^{m+1}/c,e,f,g,h;q,p)\\=(efg)^{m-n}\frac{(e/a,f/a,g/a,h/a;q,p)_{m-n}}{(a,ef/a,eg/a,fg/a;q,p)_{m-n}}\\
\times{}_{12}V_{11}(aq^{n-m};q^{-m},cq^{n-m},aq^{n+1}/c,e,f,g,h;q,p).
\end{multline*}
\end{problem}

\begin{problem}\label{isp}
In the previous problem, take $m=n+1$ and then let $c\rightarrow q$. Deduce the indefinite summation formula
\begin{multline*}\sum_{k=0}^n\frac{\theta(aq^{2k};p)}{\theta(a;p)}\frac{(e,f,g,h;q,p)_k}{(aq/e,aq/f,aq/g,aq/h;q,p)_k}q^k\\
= \frac{\theta(a/e,a/f,a/g,a/efg;p)}{\theta(a,a/ef,a/eg,a/fg;p)}\left(1-\frac{(e,f,g,h;q,p)_{n+1}}{(a/e,a/f,a/g,a/h;q,p)_{n+1}}\right),\quad efgh=a^2.
\end{multline*}
Finally, prove this identity directly by induction on $n$.\footnote{For generalizations to multi-basic series, see \eqref{wi} and Exercise \ref{qbp}.}
\end{problem}

\begin{problem}
By iterating \eqref{bt} show that, when $bcdefg=q^{n+2}a^3$,
\begin{multline*}{}_{12}V_{11}(a;b,c,d,e,f,g,q^{-n};q,p)
=\frac{g^n(aq,b,aq/cg,aq/dg,aq/eg,aq/fg;q,p)_n}{(aq/c,aq/d,aq/e,aq/f,aq/g,b/g;q,p)_n}\\
\times{}_{12}V_{11}(q^{-n}g/b;q^{-n}g/a,aq/bc,aq/bd,aq/be,aq/bf,g,q^{-n};q,p).
\end{multline*}
\end{problem}

\begin{problem}
Show that the coefficients $R_k^l$ satisfy the ``addition formula''
$$R_k^l(a,b,e,f;n)=\sum_{j=0}^n R_k^j(a,b,c,d;n)\,R_j^l(c,d,e,f;n).$$
\end{problem}

\begin{problem}
Show the ``convolution formulas''
\begin{multline*}
R_{k_1+k_2}^l(a,b,c,d;n_1+n_2)
=\sum_{l_1+l_2=l}R_{k_1}^{l_1}(aq^{\alpha k_2},bq^{\beta(n_2-k_2)},c,
d;n_1)\\
\times R_{k_2}^{l_2}(aq^{(1-\alpha)k_1},bq^{(1-\beta)(n_1-k_1)},
cq^{l_1},dq^{n_1-l_1};n_2)
\end{multline*} 
for all $\alpha,\beta\in\{0,1\}$.
\end{problem}

\section{Biorthogonal rational functions}\label{brs}

One  definition of \emph{classical orthogonal polynomial} is a system of orthogonal polynomials 
whose derivatives are again orthogonal polynomials. By a classical  result of Sonine, this means Jacobi, Laguerre and Hermite polynomials. 
During the 20th century, it was gradually realized that this definition is too restrictive, as there are further polynomials that share the main properties of Jacobi polynomials, though they are  related to difference equations rather than differential equations. This development culminated in the Askey and $q$-Askey schemes, which together contain a large number (42 as they are counted in \cite{ks}) of polynomial systems. All these polynomials are degenerate cases of two five-parameter families: the $q$-Racah polynomials and the Askey--Wilson polynomials. Spiridonov and Zhedanov \cite{sz} and Spiridonov \cite{scb} obtained seven-parameter elliptic extensions of  these systems. These are neither orthogonal nor polynomial; instead, they are biorthogonal rational functions.\footnote{In \cite{scb}, even more general biorthogonal non-rational functions are considered.}
 In general, two systems $(p_j)$ and $(q_j)$  are called \emph{biorthogonal}
if, with respect to some scalar product, $\langle p_j,q_k\rangle=0$ for $j\neq k$.  For the specific systems considered here, $q_j$ is obtained from $p_j$ by a
simple change of parameters.

A very satisfactory consequence of our 
construction of  ${}_{12}V_{11}$-functions as  overlap coefficients is that their discrete biorthogonality 
falls out immediately (we are still following \cite{res}). Indeed, it is clear  that the coefficients $R_k^l$  satisfy 
$$
\delta_{kl}=\sum_{j=0}^n R_k^j(a,b,c,d;n)\,R_j^l(c,d,a,b;n),\qquad 0\leq k,l\leq n.
$$
Let us make the substitutions
$$(a,b,c,d)\mapsto(\sqrt{c/f},q/d\sqrt{cf},a\sqrt{cf},q^{-n}\sqrt{cf}/a)$$
in this identity, and introduce  parameters $b$ and $e$ such that $ab=q^{-n}$ and $abcdef=q$.
We can then express the result in terms of the functions
\begin{multline}\label{wof}
r_k\left(X(x);a,b,c,d,e,f;q,p\right)
=\frac{(ab,ac,ad,1/af;q,p)_k}{(aq/e;q,p)_k}\\
\times{}_{12}V_{11}(a/e;ax,a/x,q/be,q/ce,q/de,q^k/ef,q^{-k};q,p).
\end{multline}
Here, the right-hand side is an even (in the multiplicative sense) elliptic function of $x$ and $X$ any generator for the field of such functions, see Proposition \ref{uec}.
Thus, $r_k$ is a rational function of its first argument. After simplification, we find that if  $ab=q^{-n}$ 
and $abcdef=q$ then
$$
\sum_{j=0}^n w_j\, r_k\left(X(aq^j);a,b,c,d,e,f;q\right) r_l\left(X(aq^j);a,b,c,d,f,e;q\right)
=C_k\,\delta_{kl},$$
where
$$ w_j=\frac{\theta(a^2q^{2j};p)}{\theta(a^2;p)}\frac{(a^2,ab,ac,ad,ae,af;q,p)_j}
{(q,aq/b,aq/c,aq/d,aq/e,aq/f;q,p)_j}\,q^j$$
and 
$$C_k=\frac{(a^2q,q/cd,q/ce,q/de;q,p)_n}{(aq/c,aq/d,aq/e,aq/cde;q,p)_n}
\frac{(q,ab,ac,ad,bc,bd,cd;q,p)_k}{(1/ef;q,p)_{k}}\frac{\theta(1/ef;p)}{\theta(q^{2k}/ef;p)}\,q^{-k}. $$
This is an elliptic extension of the orthogonality relation for $q$-Racah polynomials; see Exercise \ref{awe} for an extension of Askey--Wilson polynomials.

\begin{problem}\label{rse}
Using Theorem \ref{bt}, show that the function $r_k$ is symmetric in the parameters $(a,b,c,d)$.
\end{problem}

\section{A quadratic summation}

There are many  transformation formulas between classical hypergeometric series whose variables satisfy a quadratic relation.
Extensions of such results to basic hypergeometric series typically involve a mixture of $q$-shifted factorials $(b;q)_k$ and $(b;q^2)_k$. 
Accordingly, we call an elliptic hypergeometric sum \emph{quadratic} if it combines factors of the form $(b;q,p)_k$ and
$(b;q^2,p)_k$. In view of the identity $(b;q^2,p)_k=(\sqrt b,-\sqrt b,\sqrt{pb},-\sqrt{pb};q,p)_k$, this should be viewed  more  as a rule of thumb than a precise definition. 
 We will
derive just one quadratic summation formula in order to give some idea about the relevant methods.
All results of this section are due to Warnaar \cite{w}.

Our starting point is a telescoping sum
$$ \sum_{k=0}^n(A_{k+1}-A_k)=A_{n+1}-A_0.$$
Substituting
$A_k=B_0\dotsm B_{k-1} C_{k}\dotsm C_n $
gives
$$\sum_{k=0}^n B_0\dotsm B_{k-1}(B_{k}-C_{k})C_{k+1}\dotsm C_n=B_0\dotsm B_n-C_0\dotsm C_n. $$
In view of \eqref{ttr}, it is natural to choose
$$B_k=\theta(a_kd_k^\pm,b_kc_k^\pm;p),\qquad C_k=\theta(b_kd_k^\pm,a_kc_k^\pm;p), $$
since then 
$$B_k-C_k=\frac{a_k}{c_k}\,\theta(c_kd_k^\pm,b_ka_k^\pm;p). $$
This gives the theta function identity  
\begin{multline}\label{wti}\sum_{k=0}^n\frac{a_k}{c_k}\,\theta(c_kd_k^\pm,b_ka_k^\pm;p)
\prod_{j=0}^{k-1}\theta(a_jd_j^\pm,b_jc_j^\pm;p)\prod_{j=k+1}^n\theta(b_jd_j^\pm,a_jc_j^\pm;p)\\
=\prod_{j=0}^n\theta(a_jd_j^\pm,b_jc_j^\pm;p)-\prod_{j=0}^n \theta(b_jd_j^\pm,a_jc_j^\pm;p).\end{multline}

Consider now the special case when $a_j=a$ and $b_j=b$ are independent of $j$, but $c_j=cq^j$, $d_j=dr^j$ form geometric progressions.
After simplification, we obtain
\begin{multline}\label{wi}
\sum_{k=0}^n\frac{\theta(cdq^kr^k;p)\theta(dr^k/cq^k;p)}{\theta(cd;p)\theta(d/c;p)}\frac{(da^\pm;r,p)_k(cb^\pm;q,p)_k}{(rdb^\pm;r,p)_k(qca^\pm;q,p)_k}\,q^k\\
=\frac{\theta(ac^\pm,db^\pm;p)}{\theta(ab^\pm,dc^\pm;p)}\left(1-\frac{(cb^\pm ;q,p)_{n+1}(da^\pm ;r,p)_{n+1}}{(ca^\pm;q,p)_{n+1}(db^\pm;r,p)_{n+1}}\right).
\end{multline}
This is a \emph{bibasic} sum.
The case $q=r$ can be deduced from the elliptic Bailey transformation, see Exercise \ref{isp}.

Consider \eqref{wi}  when $a=r^{-n}/d$ and $b=1/d$.  If $n>0$, the right-hand side vanishes, whereas if
 $n=0$, the zero is cancelled by the factor $\theta(a/b;p)$. We conclude that
\begin{equation}\label{wis}\sum_{k=0}^n\frac{\theta(cdq^kr^k;p)\theta(dr^k/cq^k;p)}{\theta(cd;p)\theta(d/c;p)}\frac{(r^{-n},r^nd^2;r,p)_k(cd^\pm;q,p)_k}{(r,rd^2;r,p)_k(qr^ncd,qr^{-n}c/d;q,p)_k}\,q^k=\delta_{n0}.\end{equation}

Sums that evaluate to Kronecker's delta function are useful for deriving new summations from known ones.  Indeed, suppose that $\sum_{k} a_{kn}=\delta_{n0}$ and that  we can compute  $c_k=\sum_n a_{kn}b_n$ for some sequence $b_n$. Then, 
\begin{equation}\label{sck}\sum_k c_k=\sum_n\delta_{n0}b_n=b_0.\end{equation}
There are several ways to apply this idea to \eqref{wis}, but we are content with giving one example. We take $r=q^2$, that is,
\begin{equation}\label{qi}a_{kn}=\frac{\theta(cdq^{3k};p)\theta(dq^k/c;p)}{\theta(cd;p)\theta(d/c;p)}\frac{(q^{-2n},q^{2n}d^2;q^2,p)_k(cd^\pm;q,p)_k}{(q^2,q^2d^2;q^2,p)_k(q^{2n+1}cd,q^{1-2n}c/d;q,p)_k}\,q^k.\end{equation}
One may expect that $b_n$ contains the factor
$$\frac{(d^2;q^2,p)_n(d/c;q,p)_{2n}}{(q^2;q^2,p)_n(cdq;q,p)_{2n}}=\frac{(d^2,d/c,qd/c;q^2,p)_n}{(q^2,cdq,cdq^2;q^2,p)_n}, $$
as it combines nicely with the $n$-dependent factors from \eqref{qi} (cf.~Exercise~\ref{epi}). This looks like part of a well-poised series, so we try to match  the sum
$\sum_n a_{kn}b_n$ with the Frenkel--Turaev sum. In the case $k=0$ this is just $\sum_n b_n$, so we are led to take
$$b_n=\frac{\theta(d^2q^{4n};p)}{\theta(d^2;p)}\frac{(d^2,d/c,qd/c,e,f,q^{-2m};q^2,p)_n}{(q^2,cdq,cdq^2,d^2q^2/e,d^2q^2/f,d^2q^{2m+2};q^2,p)_n}\,q^{2n}, $$
with the balancing condition
$c^2d^2q^{2m+1}=ef$.
We now compute
\begin{multline*}\sum_{n=k}^m\frac{(q^{-2n},q^{2n}d^2;q^2,p)_k}{(q^{2n+1}cd,q^{1-2n}c/d;q,p)_k}\, b_n\\
=q^{\binom k2}(dq/c)^k\frac{\theta(d^2q^{4k};p)}{\theta(d^2;p)}\frac{(d^2;q^2,p)_{2k}(d/c;q,p)_k(e,f,q^{-2m};q^2,p)_k}{(cdq;q,p)_{3k}(d^2q^2/e,d^2q^2/f,d^2q^{2m+2};q^2,p)_k}\\
\times\sum_{n=0}^{m-k}\frac{\theta(d^2q^{4k+4n};p)}{\theta(d^2q^{4k};p)}\frac{(d^2q^{4k},q^kd/c,q^{k+1}d/c,eq^{2k},fq^{2k},q^{2k-2m};q^2,p)_nq^{2n}}{(q^2,cdq^{1+3k},cdq^{2+3k},d^2q^{2+2k}/e,d^2q^{2+2k}/f,d^2q^{2m+2k+2};q^2,p)_n}\\
=\frac{(c^2q,d^2q^2;q^2,p)_m(cdq/e;q,p)_{2m}}{(d^2q^2/e,c^2q/e;q^2,p)_m(cdq;q,p)_{2m}}\frac{(d/c;q,p)_k(e,f,q^{-2m};q^2,p)_{k}}{(c^2q;q^2,p)_{k}(cdq/e,cdq/f,cdq^{1+2m};q,p)_{k}},
  \end{multline*}
where the last step is the Frenkel--Turaev sum. Thus, \eqref{sck} takes the form
\begin{multline*}\sum_{k=0}^m\frac{\theta(cdq^{3k};p)\theta(dq^k/c;p)}{\theta(cd;p)\theta(d/c;p)}
\frac{(cd,c/d,d/c;q,p)_k(e,f,q^{-2m};q^2,p)_{k}}{(q^2,d^2q^2,c^2q;q^2,p)_{k}(cdq/e,cdq/f,cdq^{1+2m};q,p)_{k}}
\,q^k\\
 =
\frac{(d^2q^2/e,c^2q/e;q^2,p)_m(cdq;q,p)_{2m}}{(c^2q,d^2q^2;q^2,p)_m(cdq/e;q,p)_{2m}}
.\end{multline*}
After the change of variables
$(c,d,e,f,m)\mapsto(\sqrt{ab},\sqrt{a/b},c,d,n)$,
we obtain the following quadratic summation due to Warnaar.

\begin{proposition}\label{wqp}
For $cd=a^2q^{2n+1}$, 
\begin{multline*}\sum_{k=0}^{n}\frac{\theta(aq^{3k};p)}{\theta(a;p)}\frac{(a,b,q/b;q,p)_k(c,d,q^{-2n};q^2,p)_k}{(q^2,aq^2/b,abq;q^2,p)_k(aq/c,aq/d,aq^{1+2n};q,p)_k}\,q^k\\ 
=\frac{(aq;q,p)_{2n}(abq/c,aq^2/bc;q^2,p)_n}{(aq/c;q,p)_{2n}(abq,aq^2/b;q^2,p)_n}.\end{multline*}
\end{proposition}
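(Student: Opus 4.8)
The plan is to obtain the proposition from the Kronecker-delta evaluation \eqref{wis} by means of the inversion principle \eqref{sck}. Setting $r=q^2$ in \eqref{wis} produces the array $a_{kn}$ of \eqref{qi}, which satisfies $\sum_k a_{kn}=\delta_{n0}$. The strategy is then to pick a sequence $(b_n)$ for which the $n$-sum $c_k=\sum_n a_{kn}b_n$ can be evaluated in closed form, and to read off $\sum_k c_k=b_0$ from \eqref{sck}. Since the purely $k$-dependent factors of $a_{kn}$ pull out of the $n$-sum, this will express a genuine single sum over $k$ (the left-hand side of the proposition, up to a change of variables) in terms of the $k$-independent factors produced by evaluating $c_k$.

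First I would choose $b_n$ so that the inner sum becomes a Frenkel--Turaev sum. The $n$-dependent factors of $a_{kn}$, namely $(q^{-2n},q^{2n}d^2;q^2,p)_k$ and $(q^{2n+1}cd,q^{1-2n}c/d;q,p)_k$, combine cleanly (via Exercise \ref{epi}) with the well-poised summand
$$b_n=\frac{\theta(d^2q^{4n};p)}{\theta(d^2;p)}\frac{(d^2,d/c,qd/c,e,f,q^{-2m};q^2,p)_n}{(q^2,cdq,cdq^2,d^2q^2/e,d^2q^2/f,d^2q^{2m+2};q^2,p)_n}\,q^{2n}$$
subject to the balancing condition $c^2d^2q^{2m+1}=ef$. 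With this choice $b_0=1$, and the case $k=0$ of the inner sum is already a ${}_{10}E_9$ that is summed by \eqref{ftsi}.

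Next I would evaluate $c_k$ for general $k$. The key move is the index shift $n\mapsto n+k$, which brings the sum over $n$ into the standard very-well-poised shape in the base $q^2$ and supported on $[0,m-k]$; the shift generates an explicit $k$-dependent prefactor, and the remaining sum is evaluated directly by the Frenkel--Turaev formula \eqref{ftsi}. Collecting the prefactor together with the summed value writes $c_k$ as a product of a factor $M$ depending on $m$ but not $k$ and a $k$-dependent elliptic hypergeometric term. Summing over $k$ and invoking \eqref{sck} then gives $M\cdot(\text{single sum over }k)=b_0=1$, so the $m$-dependent factor divides out and the right-hand side of the resulting identity is exactly $1/M$. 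The substitution $(c,d,e,f,m)\mapsto(\sqrt{ab},\sqrt{a/b},c,d,n)$ finally turns the balancing condition $c^2d^2q^{2m+1}=ef$ into $cd=a^2q^{2n+1}$ and rewrites the identity in the symmetric form stated.

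I expect the main obstacle to be entirely computational rather than conceptual: correctly tracking the elliptic shifted-factorial identities of Exercise \ref{epi} through the shift $n\mapsto n+k$, so that the $k$-dependent prefactor extracted from $a_{kn}b_n$ is exactly right and the quotient of $q^2$-factorials and $q$-factorials reorganizes into the claimed form. Verifying that the many factors of the shape $(\cdot;q^2,p)_{2k}$ and $(\cdot;q,p)_{3k}$, together with the theta prefactor $\theta(d^2q^{4k};p)/\theta(d^2;p)$, collapse correctly after applying \eqref{ftsi} is where the bookkeeping is heaviest, and the concluding square-root change of variables requires some care to confirm that it maps the balancing condition and all factorials consistently.
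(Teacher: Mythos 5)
Your proposal is correct and is essentially identical to the paper's own proof: same Kronecker-delta array \eqref{qi}, the very same choice of $b_n$ with balancing condition $c^2d^2q^{2m+1}=ef$, the same evaluation of the inner sum via the index shift and the Frenkel--Turaev formula \eqref{ftsi}, and the same concluding substitution $(c,d,e,f,m)\mapsto(\sqrt{ab},\sqrt{a/b},c,d,n)$. Nothing further is needed beyond carrying out the bookkeeping you already identified.
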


\begin{problem}
Show that the case $p=0$ of \eqref{wti}, after substituting  $a_k+a_k^{-1}\mapsto a_k$ and so on, takes the form
\begin{multline}\label{rti} \sum_{k=0}^n(c_k-d_k)(b_k-a_k)\prod_{j=0}^{k-1}(a_j-d_j)(b_j-c_j)\prod_{j=k+1}^n(b_j-d_j)(a_j-c_j)\\
=\prod_{j=0}^n(a_j-d_j)(b_j-c_j)-\prod_{j=0}^n(b_j-d_j)(a_j-c_j).
\end{multline}
Conversely, show that substituting $a_k\mapsto X(a_k)$ and so on in \eqref{rti}, where $X$ is as in \eqref{xe}, gives back the general case of \eqref{wti}.
\end{problem}

\begin{problem}\label{qbp}
Write down the  summation obtained from \eqref{wti} by choosing all four parameter sequences as independent geometric progressions (see \cite{gs}).
\end{problem}

\begin{problem}
Prove Proposition \ref{wqp} by the method of \S \ref{fts}.  
\end{problem}

\begin{problem}
Find a cubic summation formula by combining the Frenkel--Turaev sum with the case $r=q^3$ of \eqref{wis}  (see \cite[Thm.\ 4.5]{w}).
\end{problem}

\begin{problem}
Find a transformation formula that generalizes Proposition \ref{wqp} (see \cite[Thm.\ 4.2]{w}).
\end{problem}

\begin{problem}
Let $A$ and $B$ be lower-triangular matrices (the size is irrelevant) with entries
$$A_{ij}=\frac{\prod_{k=j}^{i-1}\theta(y_jz_k^\pm;p)}{\prod_{k=j+1}^i\theta(y_jy_k^\pm;p)},\qquad
B_{ij}=\frac{y_i\theta(y_jz_j^\pm;p)\prod_{k=j+1}^i\theta(y_iz_k^\pm;p)}{y_l\theta(y_iz_i^\pm;p)\prod_{k=j}^{i-1}\theta(y_iy_k^\pm;p)}.
 $$
Show that $B=A^{-1}$. Indeed, show that the identity $AB=I$ is equivalent to \eqref{vdf} and that the identity $BA=I$ 
is equivalent to the case $a_j\equiv d_n$, $b_j\equiv d_0$ of \eqref{wti}. As any left inverse is a right inverse, this gives an alternative proof
of \eqref{wis} as a consequence of \eqref{vdf}.
\end{problem}

\section{An elliptic Minton summation}

Minton  found the summation formula \cite{m}
\begin{equation}\label{mis}{}_{r+2}F_{r+1}\left(\begin{matrix}-n,b,c_1+m_1,\dots,c_r+m_r\\b+1,c_1,\dots,c_r\end{matrix};1\right) =
\frac{n!(c_1-b)_{m_1}\dotsm(c_r-b)_{m_r}}{(b+1)_n(c_1)_{m_1}\dotsm(c_r)_{m_r}},\end{equation}
where $m_i$ and $n$ are non-negative integers such that $m_1+\dots+ m_r\leq n$. This was extended to non-terminating series by Karlsson, so sums with  integral parameter differences 
are often referred to as Karlsson--Minton-type.

Following \cite{rs}, we will obtain an elliptic extension of \eqref{mis} from the elliptic partial fraction expansion \eqref{dpf}.
We first replace $n$ by $n+1$ in \eqref{dpf} and rewrite it as
\begin{equation}\label{spf}\sum_{j=0}^n\frac {\prod_{k=1}^{n}\theta(y_jz_k^{\pm};p)}{\theta(xy_j^\pm;p)\prod_{k=0,\,k\neq j}^n\theta(y_jy_k^\pm;p)}=\frac{\prod_{k=1}^{n}\theta(xz_k^\pm;p)}{\prod_{k=0}^n\theta(x y_k^\pm;p)}. \end{equation}
We now specialize $y$ to be  a geometric progression and $z$ to be  a union of geometric progressions.\footnote{As the latter progressions may have length $1$, the case of general $z$ is included.}
That is, we write $y_j=aq^{j}$ and 
$$(z_1,\dots,z_n)=(c_1,c_1q,\dots,c_1q^{m_1},\dots,c_r,c_rq,\dots,c_rq^{m_r}), $$
where $m_j$ are non-negative integers summing to $n$. Then, the  left-hand side of \eqref{spf} takes the form
\begin{multline*}\sum_{j=0}^n\frac{\prod_{l=1}^r\prod_{k=1}^{m_l} \theta(ac_lq^{j+k-1},aq^{1+j-k}/c_l;p)}{\theta(xaq^j,xq^{-j}/a;p)\prod_{k=0,\,k\neq j}^n\theta(a^2q^{j+k},q^{j-k};p)}\\
\begin{split}&=
\sum_{j=0}^n\frac{\prod_{l=1}^r(ac_lq^{j},aq^{1+j-m_l}/c_l;q,p)_{m_l}}{\theta(xaq^j,xq^{-j}/a;p)(q,a^2q^j;q,p)_j(q^{j-n},a^2q^{2j+1};q,p)_{n-j}}\\
&=\frac{\prod_{l=1}^r(ac_l,aq^{1-m_l}/c_l;q)_{m_l}}{\theta(xa^\pm;p)(q^{-n},a^2q;q,p)_n}\\
&\quad\times\sum_{j=0}^n
\frac{\theta(a^2q^{2j};p)}{\theta(a^2;p)}\frac{(a^2,ax^\pm,q^{-n};q,p)_j}{(q,aqx^\pm, a^2q^{n+1};q,p)_j}\,q^j\prod_{l=1}^r\frac{(ac_lq^{m_l},aq/c_l;q,p)_j}
{(ac_l,aq^{1-m_l}/c_l;q,p)_j}.
\end{split}\end{multline*}
and the right-hand side is
$$\frac{\prod_{l=1}^r\prod_{k=1}^{m_l} \theta(xc_lq^{k-1},xq^{1-k}/c_l;p)}{\prod_{k=0}^n\theta(xaq^k,xq^{-k}/a;p)}=
\frac{\prod_{l=1}^r(xc_l,xq^{1-m_l}/c_l;q,p)_{m_l}}{(xa,xq^{-n}/a;q,p)_{n+1}}.$$
After the change of variables $a\mapsto\sqrt a$, $x\mapsto b/\sqrt a$, $c_l\mapsto c_l/\sqrt a$, we arrive at the following 
elliptic extension of Minton's identity.

\begin{proposition}\label{kmp}
If $m_1,\dots,m_r$ are non-negative integers and $n=m_1+\dots+m_r$, 
\begin{multline*}
{}_{2r+8}V_{2r+7}(a;b,a/b,q^{-n},c_1q^{m_1},\dots,c_rq^{m_r},aq/c_1,\dots,aq/c_r;q,p)\\
\begin{split}&=\sum_{j=0}^n
\frac{\theta(aq^{2j};p)}{\theta(a;p)}\frac{(a,b,a/b,q^{-n};q,p)_j}{(q,bq,aq/b;q,p)_j}\,q^j
\prod_{l=1}^r\frac{(c_lq^{m_l},aq/c_l;q,p)_j}
{(c_l,aq^{1-m_l}/c_l;q,p)_j}\\
&=\frac{(q,aq;q,p)_n}{(aq/b,bq;q,p)_n}\prod_{l=1}^r\frac{(c_l/b,bc_l/a;q,p)_{m_l}}{(c_l,c_l/a;q,p)_{m_l}}.
\end{split}\end{multline*}
\end{proposition}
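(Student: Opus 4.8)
The plan is to obtain the identity as a specialization of the even elliptic partial fraction expansion \eqref{dpf}, in direct analogy with the way the classical Karlsson--Minton identity \eqref{mis} reflects the rational partial fraction expansion \eqref{rpf}. Concretely, I would start from the rewritten form \eqref{spf} (which is \eqref{dpf} with $n$ replaced by $n+1$ and the $y$-indices shifted to run from $0$ to $n$), and then feed in a carefully chosen geometric specialization of the parameters. The elliptic structure of \eqref{dpf} is exactly what encodes integral parameter differences, so this route is far more natural than, say, an induction on $n$.

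The specialization I would take is $y_j=aq^{j}$ for $0\le j\le n$, together with a choice of the $z$-variables as a union of $r$ geometric progressions of ratio $q$, the $l$-th consisting of $m_l$ terms starting at $c_l$, where $m_1+\dots+m_r=n$. The virtue of this choice is that the theta-products telescope into elliptic shifted factorials. In the numerator of each term of \eqref{spf}, the product $\prod_{k=1}^n\theta(y_jz_k^\pm;p)$ collapses to $\prod_{l=1}^r(ac_lq^{j},aq^{1+j-m_l}/c_l;q,p)_{m_l}$, while the whole right-hand side of \eqref{spf} collapses to $\prod_{l=1}^r(xc_l,xq^{1-m_l}/c_l;q,p)_{m_l}/(xa,xq^{-n}/a;q,p)_{n+1}$. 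The identities of Exercise \ref{epi} then let me strip the $j$-independent pieces from the numerator, leaving the $j$-dependent ratio $\prod_{l=1}^r(c_lq^{m_l},aq/c_l;q,p)_j/(c_l,aq^{1-m_l}/c_l;q,p)_j$ that appears in the claimed summand.

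The heart of the computation is the denominator factor $\prod_{k=0,\,k\ne j}^{n}\theta(y_jy_k^\pm;p)$. With $y_j=aq^{j}$ this becomes $\prod_{k\ne j}\theta(a^2q^{j+k},q^{j-k};p)$, which I would split into the ranges $k<j$ and $k>j$ and reassemble, again via Exercise \ref{epi}, into the two elliptic shifted factorials $(q,a^2q^{j};q,p)_j$ and $(q^{j-n},a^2q^{2j+1};q,p)_{n-j}$. Combining this with the telescoped numerator and pulling out the resulting $j$-independent prefactor exhibits the left-hand side of \eqref{spf} as a constant times a sum whose summand carries the very-well-poised factor $\theta(a^2q^{2j};p)/\theta(a^2;p)$ together with the numerator parameters $a^2,\,ax^\pm,\,q^{-n},\,c_lq^{m_l}$ and their well-poised partners; this is precisely a ${}_{2r+8}V_{2r+7}$, and its balancing condition is automatic once $n=\sum_l m_l$. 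Matching it against the collapsed right-hand side and performing the final change of variables $a\mapsto\sqrt a$, $x\mapsto b/\sqrt a$, $c_l\mapsto c_l/\sqrt a$ yields the asserted evaluation.

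I expect the main obstacle to be bookkeeping rather than conceptual: correctly telescoping the several numerator progressions, and above all handling the excluded index $k=j$ in $\prod_{k\ne j}\theta(y_jy_k^\pm;p)$ so that the two one-sided products assemble into the right shifted factorials with the correct signs and powers of $q$. Once the $j$-independent factors are cleanly separated, recognizing the remaining sum as a balanced very-well-poised series is routine. (By contrast, a direct induction on $n$, in the style of Exercise \ref{isp}, seems markedly less transparent here, precisely because the Karlsson--Minton structure with several independent progressions is what \eqref{dpf} is built to encode.)
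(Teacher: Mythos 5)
Your proposal is correct and coincides with the paper's own proof: the paper likewise specializes \eqref{spf} with $y_j=aq^j$ and $z$ a union of geometric progressions starting at $c_1,\dots,c_r$, telescopes the theta-products into elliptic shifted factorials exactly as you describe (including the split of $\prod_{k\neq j}\theta(y_jy_k^\pm;p)$ into $(q,a^2q^j;q,p)_j(q^{j-n},a^2q^{2j+1};q,p)_{n-j}$), and finishes with the same substitution $a\mapsto\sqrt a$, $x\mapsto b/\sqrt a$, $c_l\mapsto c_l/\sqrt a$. There is nothing to add.
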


Proposition \ref{kmp} may seem to generalize only the case $\sum_j m_j=n$ of \eqref{mis}, but it should be noted that if we start from that special case and let $c_r\rightarrow \infty$ we obtain the general case.

\begin{problem}
Prove a transformation for Karlsson--Minton type series by starting from \eqref{vdf} and specializing $y$ to a union of two geometric progressions (see \cite[Cor.\ 4.5]{rs}).
\end{problem}

\begin{problem}
Deduce a Karlsson--Minton-type summation formula from \eqref{epf} (the result is less attractive than Proposition \ref{kmp}; see \cite[Cor.\ 5.3]{rs}). 
\end{problem}

\section{The elliptic gamma function}

The classical gamma function satisfies $\Gamma(z+1)=z\,\Gamma(z)$, which upon iteration gives 
$(z)_n={\Gamma(z+n)}/{\Gamma(z)}$.
 We will need an elliptic analogue $\Gamma(x;q,p)$ of the gamma function, which was
introduced by Ruijsenaars \cite{ru}.  It is natural to demand that  
\begin{equation}\label{egf}\Gamma(qx;q,p)=\theta(x;p)\Gamma(x;q,p),\end{equation}
 which upon iteration gives
\begin{equation}\label{egp}(x;q,p)_n=\frac{\Gamma(q^nx;q,p)}{\Gamma(x;q,p)}. \end{equation}
To solve \eqref{egf}, consider first in general a functional equation of the form $f(qx)=\phi(x)f(x)$. Upon iteration, it gives
$$f(x)=\frac{1}{\phi(x)\phi(qx)\dotsm\phi(q^{N-1} x)}\,f(q^Nx) .$$
If  $|q|<1$ and $\phi(x)\rightarrow 1$ quickly enough as $x\rightarrow 0$, one solution will be
$$f(x)=\frac{1}{\prod_{k=0}^\infty\phi(q^kx)}. $$
Alternatively, we can iterate the functional equation in the other direction and obtain
$$f(x)=\phi(q^{-1}x)\phi(q^{-2}x)\dotsm\phi(q^{-N}x)f(q^{-N}x). $$
In this case, if $\phi(x)\rightarrow 1$ quickly as $|x|\rightarrow\infty$, we find the solution
$$f(x)=\prod_{k=0}^\infty\phi(q^{-k-1}x). $$
In the case at hand, $\phi(x)=\theta(x;p)$, we can write $\phi(x)=\phi_1(x)\phi_2(x)$, where $\phi_1(x)=(x;p)_\infty$ and $\phi_2(x)=(p/x;p)_\infty$ tend to $1$ as $x\rightarrow 0$ and $|x|\rightarrow\infty$, respectively. This suggests the definition
$$\Gamma(x;q,p)=\prod_{k=0}^\infty\frac{\phi_2(q^{-k-1}x)}{\phi_1(q^k x)}=\prod_{j,k=0}^\infty\frac{1-p^{j+1}q^{k+1}/x}{1-p^jq^kx}.  $$
In a similar way as Lemma \ref{qpl}, one can then prove the following result. (In view of the symmetry between $p$ and $q$, 
we will from now on write $\Gamma(x;p,q)$ rather than $\Gamma(x;q,p)$.)

\begin{lemma}
The elliptic gamma function $\Gamma(x;p,q)$ is meromorphic as a function of $x\in\mathbb C^\ast$ and 
 of $p$ and $q$ with  $|p|$, $|q|<1$, with zeroes precisely at the points $x=p^{j+1}q^{k+1}$ and
poles precisely at the points $x=p^{-j}q^{-k}$, where $j,\,k\in\mathbb Z_{\geq 0}$. 
Moreover, \eqref{egf} holds.
\end{lemma}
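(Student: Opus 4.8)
The plan is to follow the proof of Lemma~\ref{qpl} almost verbatim, applied separately to the numerator and denominator of the double product. Writing
\[\Gamma(x;q,p)=\frac{N(x)}{D(x)},\qquad D(x)=\prod_{j,k=0}^\infty(1-p^jq^kx),\quad N(x)=\prod_{j,k=0}^\infty\left(1-\frac{p^{j+1}q^{k+1}}{x}\right),\]
I would first analyze $D$. Put $r=\max(|p|,|q|)<1$, so that $|p^jq^kx|\le|x|r^{j+k}$; in particular the factors tend to $1$ as $j+k\to\infty$. Fixing $x\neq0$ (and $p,q$), choose $L$ with $|x|r^{L+1}<1$ and split $D=P\cdot Q$, where $P=\prod_{j+k\le L}(1-p^jq^kx)$ is a finite product and $Q=\prod_{j+k>L}(1-p^jq^kx)$. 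On the tail the expansion $1-w=\exp(-\sum_{n\ge1}w^n/n)$ applies, and the resulting triple series $\sum_{j+k>L}\sum_{n\ge1}(p^jq^kx)^n/n$ converges absolutely (dominated by a geometric series in $r$); rearranging it exhibits $Q$ as the exponential of a function analytic in $x$. Hence $D$ is entire in $x$, and its zeroes are exactly the zeroes of the finite factors, that is $x=p^{-j}q^{-k}$ with $j,k\ge0$. Since $L$ may be chosen uniformly on compact subsets of the region $x\in\mathbb C^\ast$, $|p|,|q|<1$, the same representation gives joint analyticity in $(x,p,q)$.

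The numerator $N$ is handled identically: its factors depend on $1/x$, which for fixed $x\in\mathbb C^\ast$ are again small once $j+k$ is large, so $N$ is analytic on $\mathbb C^\ast$ (and jointly in $p,q$) with zeroes precisely at $x=p^{j+1}q^{k+1}$, $j,k\ge0$. The observation that makes the ``precisely'' statement clean is a separation by modulus: every zero of $N$ satisfies $|x|=|p|^{j+1}|q|^{k+1}<1$, whereas every zero of $D$ satisfies $|x|=|p|^{-j}|q|^{-k}\ge1$. The two zero sets are therefore disjoint, so no cancellation occurs in $\Gamma=N/D$, which is consequently meromorphic in $x$ (and in $p,q$) with zeroes exactly at $x=p^{j+1}q^{k+1}$ and poles exactly at $x=p^{-j}q^{-k}$, as claimed.

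It remains to verify \eqref{egf}. Here I would compute $\Gamma(qx;q,p)/\Gamma(x;q,p)$ directly from the product. Replacing $x$ by $qx$ shifts the $q$-exponent, and the absolute convergence licenses rearranging the factors and telescoping in the index $k$: the terms involving $1/x$ collapse (using $q^{k+1}\to0$) to $\prod_{j\ge0}(1-p^{j+1}/x)=(p/x;p)_\infty$, and the terms involving $x$ collapse to $\prod_{j\ge0}(1-p^jx)=(x;p)_\infty$. Their product is $(x,p/x;p)_\infty=\theta(x;p)$, which is exactly \eqref{egf}.

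The main obstacle is purely bookkeeping: justifying the absolute convergence and rearrangement of the double (and, after taking logarithms, triple) series, choosing $L$ uniformly so that joint analyticity in $(x,p,q)$ follows, and confirming via the modulus separation that numerator and denominator share no zeroes, so that the located zeroes and poles are genuine. None of these steps is deep, but they must be carried out with some care because, as the exercise following Lemma~\ref{qpl} warns, the nonvanishing of an infinite product does not follow merely from the factors being nonzero and tending to $1$.
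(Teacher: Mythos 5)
Your proposal is correct and takes essentially the approach the paper intends: the paper gives no detailed proof, saying only that the lemma follows ``in a similar way as Lemma~\ref{qpl}'', and your argument is exactly that plan carried out --- the finite-part-plus-exponentiated-tail estimate applied separately to the numerator and denominator of the double product, joint analyticity via a uniform choice of the cutoff on compact sets, and telescoping in $k$ to verify \eqref{egf}. Your modulus-separation observation (zeroes of the numerator lie in $|x|<1$, zeroes of the denominator in $|x|\geq 1$, so no cancellation occurs) is a nice explicit way to justify the word ``precisely''.
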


Note the inversion formula
\begin{equation}\label{gi}\Gamma(x;p,q)\Gamma(pq/x;p,q)=1.\end{equation}
Just as for theta functions and elliptic shifted factorials, we will use the condensed notation
\begin{align*}\Gamma(x_1,\dots,x_m;p,q)&=\Gamma(x_1;p,q)\dotsm\Gamma(x_m;p,q), \\
\Gamma(ax^\pm;p,q)&=\Gamma(ax;p,q)\Gamma(a/x;p,q).\end{align*}

One would expect that the elliptic gamma function degenerates to the classical gamma function if we first take the trigonometric limit $p\rightarrow 0$ and then the rational limit $q\rightarrow 1$. Indeed, we have
$\Gamma(x;q,0)=1/(x;q)_\infty$ and \cite[\S 1.10]{gr}
$$\lim_{q\rightarrow 1}\,(1-q)^{1-x}\frac{(q;q)_\infty}{(q^x;q)_\infty}=\Gamma(x). $$

\begin{problem}
Show that the meromorphic solutions to $f(qx)=\theta(x;p)f(x)$ are precisely the functions  $f(x)=\Gamma(x;q,p)g(x)$, where $g$ is an arbitrary multiplicatively elliptic function
with period $q$. 
\end{problem}

\begin{problem} Give an analogue of the reflection formula 
$$\Gamma(z)\Gamma(1-z)=\frac{\pi}{\sin(\pi z)}$$ 
for  $\Gamma(x,q/x;p,q)$.
\end{problem}

\begin{problem} Give an analogue of the duplication formula 
$$\sqrt{\pi}\,\Gamma(2z)=2^{2z-1}\Gamma(z)\Gamma(z+1/2)$$
 for the function $\Gamma(x^2;p,q)$.
\end{problem}

\section{Elliptic hypergeometric integrals}\label{eis}

As was mentioned in \S \ref{his}, if we want to consider elliptic analogues of infinite hypergeometric series, it is often better to define them by integrals.  
The study of such integrals was initiated by Spiridonov, see e.g.\ \cite{sp1,scb}.

A model result for converting series to integrals is Barnes's integral representation
\begin{equation}\label{bi}{}_2F_1\left(\begin{matrix}a,b\\c\end{matrix};z\right)=\frac{\Gamma(c)}{\Gamma(a)\Gamma(b)}\int_{-\ti\infty}^{\ti\infty}\frac{\Gamma(a+s)\Gamma(b+s)\Gamma(-s)}{\Gamma(c+s)}\,(-z)^s\,\frac{ds}{2\pi\ti}. 
\end{equation}
This holds for $z\notin\mathbb R_{\geq 0}$ and $a,\,b\notin\mathbb Z_{<0}$. The integrand has poles at $s=-a-n$, $s=-b-n$
and $s=n$, where $n\in\mathbb Z_{\geq 0}$. The contour of integration should pass to the right of the first two sequences of poles but to the left of the third sequence. 

To prove \eqref{bi} one needs to know that the residue of the gamma function at $-n$ is $(-1)^n/n!$.  Consequently, the residue of the integrand at $n$ is
$$\frac 1{2\pi\ti}\frac{\Gamma(c)}{\Gamma(a)\Gamma(b)}\frac{\Gamma(a+n)\Gamma(b+n)}{\Gamma(c+n)n!}\,z^n
=\frac 1{2\pi\ti}\frac{(a)_n(b)_n}{(c)_nn!}\,z^n. $$ 
Thus, \eqref{bi} simply means that the integral is $2\pi\ti$ times the sum of all residues to the right of the contour. 
This follows from
Cauchy's residue theorem together with an estimate of the integrand that we will not go into.

In analogy with Barnes's integral, we will now consider a class of integrals closely related to  very-well-poised elliptic hypergeometric series \cite{scb}.
They have the form
\begin{equation}\label{wpi}I(t_0,\dots,t_m;p,q)=\int\frac{\Gamma(t_0x^\pm,\dots, t_mx^\pm;p,q)}{\Gamma(x^{\pm 2};p,q)}\frac{dx}{2\pi\ti x}. \end{equation}
Note that the factor
$$\frac1{\Gamma(x^{\pm2};p,q)}=-\frac{\theta(x^2;p)\theta(x^2;q)}{x^2} $$
is analytic for $x\neq 0$. Thus, the only poles of the integrand are at
\begin{equation}\label{ip}x=p^jq^kt_l,\qquad j,k\in\mathbb Z_{\geq 0}, \quad l=0,\dots,m \end{equation}
and at the reciprocal of these points. The integration is over a closed positively oriented contour such that the poles \eqref{ip} are inside the contour and
their reciprocals are outside. Such a contour exists if $t_jt_k\notin p^{\mathbb Z_{\leq 0}}q^{\mathbb Z_{\leq 0}}$ for $1\leq j,\, k\leq m$.
For instance, if $|t_j|<1$ for all $j$ we may integrate over the unit circle.

Let $f(x)$ denote the integrand in \eqref{wpi}. To explain the connection to elliptic hypergeometric series, we first use \eqref{egp} and Exercise \ref{epi} 
to show that 
$$\frac{f(q^k x)}{f(x)}=C^k q^{\binom k2(m-3)}x^{k(m-3)}\frac{\theta(q^{2k}x^2;p)}{\theta(x^2;p)}\prod_{j=0}^{m}\frac{(t_jx;q,p)_k}{(qx/t_j;q,p)_k}, $$
where  $C=(-1)^{m+1}q^{m-3}/t_0\dotsm t_{m}$. 
This clearly has a very-well-poised structure, but we dislike the quadratic exponent of $q$. To get rid of it,
we substitute  $t_j=p^{l_j}u_j$, where  $l_j$ are integers. Using again
Exercise \ref{epi}, we find that
$$\frac{f(q^k x)}{f(x)}=\left(Cp^{|l|}q^{-|l|}\right)^k \left(q^{\binom k2}x^{k}\right)^{m-3-2|l|}\frac{\theta(q^{2k}x^2;p)}{\theta(x^2;p)}\prod_{j=0}^{m}\frac{(u_jx;q,p)_k}{(qx/u_j;q,p)_k}. $$
Thus, it is natural to take $m$  odd and $|l|=(m-3)/2$. If we in addition assume that
\begin{equation}\label{ibc}t_0\dotsm t_m=(pq)^{(m-3)/2},\end{equation}
then $C=p^{-|l|}q^{|l|}$ and 
$$\frac{f(q^k x)}{f(x)}=\frac{\theta(q^{2k}x^2;p)}{\theta(x^2;p)}\prod_{j=0}^{m}\frac{(u_jx;q,p)_k}{(qx/u_j;q,p)_k}.$$
This gives in turn
\begin{align}\nonumber\frac{\Res_{x=u_0q^k}f(x)}{\Res_{x=u_0}f(x)}&=\frac{\lim_{x\rightarrow u_0q^k}(x-u_0q^k)f(x)}{\lim_{x\rightarrow u_0}(x-u_0)f(x)}=q^k\lim_{x\rightarrow u_0}\frac{f(q^kx)}{f(x)}\\
\label{frq}&=\frac{\theta(q^{2k}u_0^2;p)}{\theta(u_0^2;p)}\frac{(u_0^2,u_0u_1\dotsm,u_0u_m;q,p)_k}{(q,qu_0/u_1,\dots,qu_0/u_m;q,p)_k}\,q^k.
\end{align}
Thus, the (typically divergent) sum of residues at the points $x=u_0q^k$, $k\geq 0$, is a constant times
$${}_{m+5}V_{m+4}(u_0^2;u_0u_1,\dots,u_0u_m;q,p). $$ 
In contrast to \eqref{bi}, we are not claiming that the integral is equal to this sum. However, we can still think of \eqref{wpi} as a  substitute for the series ${}_{m+5}V_{m+4}$, at least when $m$ is odd and \eqref{ibc} holds.

\begin{problem}
In view of the definition of elliptic hypergeometric series, it is natural to call an integral
$$\int f(x)\,dx $$
\emph{elliptic hypergeometric} if  $f(qx)/f(x)$ is multiplicatively elliptic with period $p$. Show that the integral \eqref{wpi} is elliptic hypergeometric  if and only if
$t_0^2\dotsm  t_m^2=(pq)^{m-3}$. 
\end{problem}

\section{Spiridonov's elliptic beta integral}

In view of the Frenkel--Turaev formula, one may hope that $\eqref{wpi}$ can be computed in closed form for
$m=5$ and $t_0\dotsm t_5=pq$. Indeed, we have the following beautiful integral evaluation due to Spiridonov.
The limit case when $p\rightarrow 0$ (with $t_0,\dots,t_4$ fixed) is due to Nasrallah and Rahman \cite{nr}; the subsequent limit $t_4\rightarrow 0$ is the famous Askey--Wilson integral \cite{aw}.

 \begin{theorem}\label{sbp}
 Assume that $|p|,\,|q|<1$ and that $t_0,\dots,t_5$ are parameters such that 
 \begin{equation}\label{ftv}t_jt_k\notin p^{\mathbb Z_{\leq 0}}q^{\mathbb Z_{\leq 0}},\qquad 0\leq j,k\leq 5,\end{equation}
 \begin{equation}\label{sib}t_0\dotsm t_5=pq.\end{equation}
  Then,
\begin{equation}\label{sbi} I(t_0,\dots,t_5;p,q)
 =\frac{2\prod_{0\leq i<j\leq 5}\Gamma(t_it_j;p,q)}{(p;p)_\infty(q;q)_\infty}.\end{equation}
  \end{theorem}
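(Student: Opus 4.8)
The plan is to prove $I=J$, where $J$ denotes the right-hand side of \eqref{sbi}, by showing that both are annihilated by the same first-order $q$-difference operator in the parameters and then invoking an elliptic Liouville argument. The only tools I expect to need are the functional equation \eqref{egf} for the elliptic gamma function and Weierstrass's three-term relation \eqref{ttr}.

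First I would derive a contiguous relation for $I$. Writing $\Delta(x)$ for the integrand of \eqref{wpi}, I look for an auxiliary function $g(x)$, built from elliptic gamma and theta factors, with the property that $g(qx)-g(x)$ equals an explicit $\mathbb C$-linear combination of integrands $\Delta(x)$ in which one pair of parameters has been shifted in a balanced way (say $t_5\mapsto qt_5$ together with $t_4\mapsto t_4/q$, which preserves \eqref{sib}). The coefficients in this combination are manufactured from \eqref{ttr}: a suitable choice of the three theta arguments lets me realize the required elliptic function of $x$ as exactly such a combination. Integrating the identity $g(qx)-g(x)=\sum_k c_k\Delta_k(x)$ against $\tfrac{dx}{2\pi\ti x}$ over the contour then yields a three-term relation among the corresponding integrals $I$, since the substitution $x\mapsto qx$ in one copy turns the left-hand side into a sum of residues at the poles swept between the contour and its dilate.

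Next I would check that $J$ satisfies the very same three-term relation. Here \eqref{egf} converts each shift $t_j\mapsto qt_j$ into a ratio of theta functions of the products $t_it_j$, and the resulting theta identity is again an instance of \eqref{ttr}. Consequently the quotient $\phi=I/J$ is invariant under the balanced shift $t_j\mapsto qt_j$, and by the manifest $p\leftrightarrow q$ symmetry of both $I$ and $J$ it is also invariant under $t_j\mapsto pt_j$. Viewed as a function of a single parameter (with the balancing parameter absorbing the change), the $q$-invariance makes $\phi$ a $q$-elliptic function; once one checks, using these invariances to locate the possible singularities, that $\phi$ has no poles, Lemma \ref{ll} forces $\phi$ to be a constant independent of all the $t_j$.

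It remains to evaluate that constant, which I would do by a degeneration in which the integral collapses to a finite sum: letting two parameters approach a configuration that pinches the contour, the integral is captured by finitely many residues and, through \eqref{frq}, reduces to a terminating very-well-poised series summed by the Frenkel--Turaev formula \eqref{ftsi} of Theorem \ref{ftit}; comparing with the corresponding limit of $J$ forces the constant to be $1$. The main obstacle is the first step: writing down $g$ explicitly and, above all, controlling precisely which poles of $\Delta$ lie between the contour and $q\cdot(\text{contour})$, so that the integrated total difference contributes exactly the claimed residue terms and no spurious ones. The convergence estimates and contour deformations needed to justify this interchange are the delicate part, whereas the algebra in the remaining steps is routine once \eqref{ttr} is in hand.
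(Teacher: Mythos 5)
Your plan has the same skeleton as the paper's proof (Spiridonov's ``short proof''): express the difference of the integrand at a balanced pair-shift of the parameters as a total $q$-difference $g(x)-g(qx)$, integrate and deform the contour, conclude that $I/\prod_{i<j}\Gamma(t_it_j;p,q)$ is invariant under balanced $q$- and $p$-shifts, prove constancy, and fix the constant by pinching the contour. Two of your steps, however, would not go through as written. The smaller one concerns the key identity: the bracket that actually arises is a two-term combination $A\,\theta(q^{-1}t_5x^\pm;p)-B\,\theta(t_0x^\pm;p)$, and what one must do is recognize it as $x\,\theta(x^2;p)^{-1}$ times an antisymmetrized product of six theta functions; this is the $n=2$ case of \eqref{sbt}, which is proved by the same two-point interpolation argument that underlies \eqref{ttr} but is \emph{not} obtainable from a single substitution into \eqref{ttr} (the terms there are products of four thetas, here of six). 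You would need \eqref{sbt} (or \eqref{epf}) explicitly. Note also that in the correct arrangement the contour is chosen, using the rays condition \eqref{trc}, so that no poles of $g$ lie between $\mathcal C$ and $q\mathcal C$: the integrated total difference is then exactly zero, not a residue sum that has to be tracked.

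The genuine gap is your constancy step. Invoking Lemma \ref{ll} requires knowing that $\phi=I/J$ is analytic (or at least meromorphic) on all of $\mathbb C^\ast$ as a function of the shifted parameter, in particular across the points where \eqref{ftv} fails; but these are precisely the pinch points at which $I$ itself is singular, and showing that those singularities cancel against the poles of $J$ is a nontrivial residue analysis that you defer to ``once one checks \dots that $\phi$ has no poles.'' The paper sidesteps this entirely: it first uses analytic continuation to reduce to real $0<p,q<1$, so that the forbidden set for $t_0$ is a finite union of rays \eqref{trc}; both shifts preserve the complement of these rays, and by \eqref{ndpq} the orbit $p^kq^lt_0$ has a limit point inside it, so $F(t_0)=F$ follows from analytic continuation alone, with no statement about poles ever needed. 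Finally, your normalization via Frenkel--Turaev works but is much heavier than necessary: the paper takes the minimal pinch $t_0\to t_1^{-1}$, splits off the two residues at $x=t_0^{\pm1}$ as in \eqref{fmc}, and reads off $F=2/(p;p)_\infty(q;q)_\infty$ directly; the degeneration $t_0t_1\to q^{-n}$ combined with Theorem \ref{ftit} is the content of the exercise \emph{after} the theorem, run in the opposite direction (integral implies sum), and using it here imports machinery the constant evaluation does not require.
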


We will give an elementary proof of Theorem \ref{sbp} found
 by Spiridonov \cite{sp2} a few years after the original proof in \cite{sp1}.
 Consider first $I(t_0,\dots,t_5;p,q)$ as a function of $p$. For fixed values of the parameters $t_j$, there are only finitely many values of $p$
 such that $|p|<1$ and \eqref{ftv} is violated. Outside these points, the integral is analytic in $p$. Thus, by analytic continuation, we may assume $0<p<1$. By symmetry, we may assume that $0<q<1$ and, again by analytic continuation, 
 \eqref{ndpq}.
  Consider now the integral as a function of $t_0$, where  $t_1,\dots,t_4$ are fixed and $t_5$ is  determined  from  \eqref{sib}. 
 It is analytic as long as $t_0$ avoids the points forbidden by \eqref{ftv}.
 Since $p$ and $q$ are real, these forbidden values are on a finite number of rays starting at the origin.
 We will assume that $t_0$ avoids these rays, that is,
 \begin{equation}\label{trc} t_jt_k\notin\mathbb R_{>0},\qquad j=0,\,5,\quad 0\leq k\leq 5.\end{equation}
 We proceed to  show that, under this condition, the quotient
 $$F(t_0)=\frac{I(t_0,\dots,t_5;p,q)}{\prod_{0\leq i<j\leq 5}\Gamma(t_it_j;p,q)} $$
 satisfies $F(qt_0)=F(t_0)$.

    Let 
$$f(t_0;x)=\frac{\prod_{j=0}^5\Gamma(t_jx^\pm;p,q)}{\prod_{0\leq i<j\leq 5}\Gamma(t_it_j;p,q)\cdot\Gamma(x^{\pm 2};p,q)}
$$
denote the integrand of $F(t_0)$ (apart from $1/2\pi\ti x$). We use \eqref{egf} to compute the difference 
\begin{multline*}f(t_0;x)-f(qt_0;x)=\frac{\Gamma(t_0x^\pm,\dots,t_4x^\pm,q^{-1}t_5x^\pm;p,q)}{\Gamma(x^{\pm 2},t_0t_5,qt_0t_1,\dots,qt_0t_4;p,q)\prod_{1\leq i<j\leq 5}\Gamma(t_it_j;p,q)}\\
\times\big\{\theta(t_0t_1,\dots,t_0t_4,q^{-1}t_5x^\pm;p)-\theta(q^{-1}t_1t_5,\dots,q^{-1}t_4t_5,t_0x^\pm;p)\big\}.
\end{multline*}
We now apply the case $n=2$ of  \eqref{sbt},  where we substitute
$$(x,a_1,a_2,b_1,\dots,b_4)\mapsto(\lambda t_0,\lambda x,\lambda/x,t_1/\lambda,\dots,t_4/\lambda),\qquad \lambda^2=\frac{pq}{t_0t_5}. $$
After simplification, we find that the  factor in brackets equals
$$\frac{t_0\theta(t_5/qt_0;p)}{x^{-1}\theta(x^2;p)}\big\{x^{-2}\theta(t_0x,\dots,t_4x,t_5x/q;p)-
x^2\theta(t_0/x,\dots,t_4/x,t_5/qx;p)\big\}.$$
 It follows that
\begin{equation}\label{fgi}f(t_0;x)-f(qt_0;x)=g(t_0;x)-g(t_0;qx), \end{equation}
where
$$g(t_0;x)=\frac{t_0x\theta(t_5/qt_0;p)\theta(x^2;q)\Gamma(t_0x,\dots,t_4x,t_5x/q,t_0q/x,\dots,t_4q/x,t_5/x;p,q)}{\Gamma(t_0t_5;p,q)\prod_{j=1}^4\Gamma(qt_0t_j;p,q)\prod_{1\leq i<j\leq 5}\Gamma(t_it_j;p,q)}. $$

Integrating \eqref{fgi} over a contour $\mathcal C$ gives
\begin{equation}\label{fgii}\int_{\mathcal C} f(t_0;x)\,\frac{dx}{2\pi\ti x}- \int_{\mathcal C} f(qt_0;x)\,\frac{dx}{2\pi\ti x}=\int_{\mathcal C} g(t_0;x)\,\frac{dx}{2\pi\ti x}- \int_{q\mathcal C} g(t_0;x)\,\frac{dx}{2\pi\ti x}.\end{equation}
We choose $\mathcal C$ so that the points \eqref{ip} (with $m=5$) are inside $\mathcal C$ and their reciprocals outside. Then, the first integral is equal to $F(t_0)$. 
The second integral equals $F(qt_0)$, provided that these conditions still hold when $t_0,\,t_5$ are
replaced by $t_0q,\,t_5/q$. This gives the additional requirement that the points
$x=p^jq^{-1}t_5 $ are inside $\mathcal C$ for $j\in\mathbb Z_{\geq 0}$ and their reciprocals outside $\mathcal C$.
We can choose $\mathcal C$ in this way, provided that none of the points $p^jq^{-1}t_5$ is equal to the reciprocal of one of the points
\eqref{ip}. This follows from our assumption \eqref{trc}.

The function $g(t_0;x)$ has poles at
$$x=t_lp^jq^{k+1},\quad x=t_5p^jq^k,\qquad 0\leq l \leq 4,\quad j,k\in\mathbb Z_{\geq 0} $$
and at
$$x=t_l^{-1}p^{-j}q^{-k},\quad x=t_5^{-1}p^{-j}q^{1-k},\qquad 0\leq l \leq 4,\quad j,k\in\mathbb Z_{\geq 0}. $$
The first set of poles are inside both the contours $q \mathcal C$ and $\mathcal C$, whereas the second set of poles are outside both contours. Thus, we can deform $\mathcal C$ to $q\mathcal C$ without crossing any poles of $g$. It follows that the right-hand side of \eqref{fgii} vanishes. This completes our proof that $F(qt_0)=F(t_0)$.  By symmetry, $F(pt_0)=F(t_0)$. Since $p$ and $q$ are real, we may iterate these equations without violating \eqref{trc}.
Thus, $F(p^kq^lt_0)=F(t_0)$ for $k,\,l\in\mathbb Z$. Since we assume \eqref{ndpq}, the points $p^kq^lt_0$ have a limit point in the open set defined by \eqref{trc}. 
By analytic continuation,  $F(t_0)=F$ is a constant.

To compute the constant $F$, we consider the limit $t_0\rightarrow t_1^{-1}$. The obstruction from letting $t_0=t_1^{-1}$ in the definition of $I$ comes from the condition that $x=t_0$ and $x=t_1$ are inside $\mathcal C$, whereas $x=t_0^{-1}$ and $x=t_1^{-1}$ are outside.
To resolve this problem, we write
\begin{equation}\label{fmc}F=\int_{\mathcal C}f(t_0;x)\,\frac{dx}{2\pi\ti x}=\Res_{x=t_0}\frac{f(t_0;x)}{x}-\Res_{x=t_0^{-1}}\frac{f(t_0;x)}{x}
+\int_{\mathcal C'}f(t_0;x)\,\frac{dx}{2\pi\ti x}, \end{equation}
where $\mathcal C'$ is a modification of $\mathcal C$ running outside $x=t_0$ and inside $x=t_0^{-1}$. As $f$ vanishes in the limit $t_0\rightarrow t_1^{-1}$, so does the integral over $\mathcal C'$. Moreover, since
 $f(t_0;x)=f(t_0;x^{-1})$, the first two terms can be combined and we obtain
$$F=2\lim_{t_0\rightarrow t_1^{-1}}\Res_{x=t_0}\frac{f(t_0;x)}{x}.$$
We compute
\begin{multline*}\Res_{x=t_0}\Gamma(t_0/x;p,q)\\
\begin{split}&=\lim_{x\rightarrow t_0}\frac{x-t_0}{(1-t_0/x)\prod_{j=0}^\infty(1-p^jt_0/x)(1-q^jt_0/x)}\prod_{j,k=0}^\infty\frac{1-p^{j+1}q^{k+1}x/t_0}{1-p^{j+1}q^{k+1}t_0/x}\\
&=\frac{t_0}{(p;p)_\infty(q;q)_\infty}\end{split}\end{multline*}
and consequently
$$ \Res_{x=t_0}\frac{f(t_0;x)}{x}=\frac{\prod_{j=1}^5\Gamma(t_j/t_0;p,q)}{(p;p)_\infty(q;q)_\infty\Gamma(t_0^{-2};p,q)\prod_{1\leq i<j\leq 5}\Gamma(t_it_j;p,q)}.$$
If $t_0\rightarrow t_1^{-1}$, this becomes
$$ \frac{1}{(p;p)_\infty(q;q)_\infty\prod_{2\leq i<j\leq 5}\Gamma(t_it_j;p,q)}=\frac 1{(p;p)_\infty(q;q)_\infty},$$
where we used \eqref{gi} and the fact that $t_2t_3t_4t_5=pq$ in the limit.
 In conclusion, $F= 2/{(p;p)_\infty(q;q)_\infty}$,
which is Spiridonov's integral evaluation.

\begin{problem}
By considering the integral
$$\int\frac{\Gamma(cz^\pm w^\pm;p,q)\prod_{j=1}^4\Gamma(a_jz^\pm,b_jw^\pm;p,q)}{\Gamma(z^{\pm2},w^{\pm 2};p,q)}\frac{dz}{2\pi\ti z}\frac{dw}{2\pi\ti w}, $$
prove that
$$I(t_1,\dots,t_8;p,q)=\prod_{1\leq j<k\leq 4}\Gamma(t_jt_k,t_{j+4}t_{k+4};p,q)I(t_1/\lambda,\dots,t_4/\lambda,t_5\lambda,\dots,t_8\lambda), $$
where $\lambda^2=t_1t_2t_3t_4/pq=pq/t_5t_6t_7t_8$.
Iterating this identity, obtain several further integral transformations (see \cite[\S 5.1]{ss}).
\end{problem}

\begin{problem}
Consider the limit $t_0t_1\rightarrow q^{-n}$ of \eqref{sbi}. Generalizing the splitting \eqref{fmc} and using \eqref{frq}, deduce the Frenkel--Turaev summation.
\end{problem}

\begin{problem}\label{awe}
Show, under appropriate assumptions on the parameters and the contour of integration, the one-parameter family of biorthogonality relations
\begin{multline*} \int\frac{\Gamma(t_0x^\pm,\dots, t_5x^\pm;p,q)\theta(\lambda x^\pm;q)}{\Gamma(x^{\pm 2};p,q)}\\
\times r_k(X(x);t_0,t_1,t_2,t_3,t_4,t_5;q,p)\,r_l(X(x);t_0,t_1,t_2,t_3,t_5,t_4;q,p)\,\frac{dx}{2\pi\ti x}
=0,
\end{multline*}
where $k\neq l$, $t_0\dotsm t_5=q$, the rational functions $r_k$ are defined in \eqref{wof} and $\lambda$ is a free parameter.
This gives an elliptic analogue of Rahman's biorthogonal rational functions \cite{ra}, which generalize
 the Askey--Wilson polynomials \cite{aw}.\footnote{Hint: Use the symmetry of Exercise \ref{rse} to expand $r_k$ as a sum with numerator parameters $t_0x^\pm$ and $r_l$ as a sum with  numerator parameters
$t_1x^\pm$. Then use that, since 
 $\theta(\lambda x^\pm;q)$ is in the linear span of $\theta(t_2x^\pm;q)$ and $\theta(t_3x^\pm;q)$, it is sufficient to take $\lambda=t_3$.
One can give more general  two-index biorthogonality relations 
 for functions of the form $r_{k_1}(\dotsm;p,q)r_{k_2}(\dotsm;q,p)$, see \cite{scb}.}
\end{problem}

\chapter{Solvable lattice models}\label{slc}

\section{Solid-on-solid  models}

We will now explain how elliptic hypergeometric series first appeared, as fused Boltzmann weights for Baxter's elliptic solid-on-solid model.
The main reference for Chapter \ref{slc} is \cite{d}. 
The reader who wants more background on  exactly solvable  models in statistical mechanics is referred to the first few chapters of \cite{jm}
for a brief introduction and  to the standard textbook \cite{ba} for more details.

The goal of statistical mechanics is to predict the large-scale behaviour of a system described by local rules. 
In solid-on-solid (SOS) models, the system can be
viewed as a random surface. Let us first consider a model whose \emph{states} are rectangular arrays of  fixed size with real entries. We may think of the entries as  
the height of a discrete surface over the rectangle. 
To get a statistical model, we need to associate a weight to each state, which is proportional to the probability that  the state is assumed.\footnote{Physically, the
weight is $e^{-E/kT}$, where $k>0$ is Boltzmann's constant, $E$ the energy of the state and $T$ the temperature. Thus, high energy states are less likely than low energy states, but become more
likely as temperature increases.}  
The weight is the product of local \emph{Boltzmann weights},
associated to $2\times 2$-blocks of adjacent entries in the array.
If our array is $(M+1)\times (N+1)$, the $2\times 2$-blocks naturally  form an $(M\times N)$ array. Giving the blocks coordinates $(i,j)$ in a standard way,
suppose that the block with coordinates $(i,j)$ is $\left[\begin{smallmatrix}a&b\\c&d\end{smallmatrix}\right]$. 
We then assign to this block the  Boltzmann weight 
$$W\bigg(\begin{matrix}a&b\\c&d\end{matrix}\,\bigg|\,u_i,v_j\bigg),$$
 where we for now think of $W$ as an arbitrary function
 of six variables, 
   and where $u_1,\dots,u_M,v_1,\dots,v_N$ are parameters associated to the vertical and horizontal lines separating the heights.
   These parameters are known as \emph{rapidities} (or spectral parameters) and the lines are called \emph{rapidity lines}. 
As an example, the state
$$\begin{matrix}1&2&3\\4&5&6\\7&8&9\end{matrix} $$
has weight
\begin{align*}&W\left(\begin{matrix}1&2\\4&5\end{matrix}\,\bigg|\,u_1,v_1\right)\cdot W\left(\begin{matrix}2&3\\5&6\end{matrix}\,\bigg|\,u_1,v_2\right)\\
\times\, &W\left(\begin{matrix}4&5\\7&8\end{matrix}\,\bigg|\,u_2,v_1\right)\cdot W\left(\begin{matrix}5&6\\8&9\end{matrix}\,\bigg|\,u_2,v_2\right). \end{align*}

In the models that we will consider, only finitely many states are allowed. For instance, for Baxter's
elliptic SOS model described in \S \ref{ess},
one rule is that the height of adjacent squares differ by exactly one.\footnote{This condition is natural for  body-centered cubic crystals such as iron at room temperature.}
Since we  may shift all heights by an arbitrary real number we need further restrictions, which we will
take to be boundary conditions. For instance,
we may fix
the whole boundary or just  the height at a corner.
 Assuming in addition that $W$ is positive, we can then 
define the probability of a state to be its weight divided by the \emph{partition function}
$$\sum_{\text{states}} \text{weight},$$
where we sum over all states satisfying our boundary conditions.

We now observe that  SOS models make sense on more general geometries.  The rapidity lines, separating regions with constant height,  could be quite
arbitrary oriented  curves in a portion of the plane. 
The main restriction is that only two curves may cross at any point.  Then, each crossing looks like
\setlength{\unitlength}{2pt}
$$ \begin{picture}(35,35)
\put(13,23){$a$}
\put(23,23){$b$}
\put(23,13){$d$}
\put(13,13){$c$}
\put(18.5,32){$v$}
\put(32,18.5){$u$}
\put(20,10){\vector(0,1){20}}
\put(10,20){\vector(1,0){20}}
\end{picture}$$
to which we assign the  weight 
$$W\left(\begin{matrix}a&b\\c&d\end{matrix}\,\bigg|\,u,v\right).$$
Note that the orientation determines how the heights and rapidities should be inserted in $W$.
 To give an example, the state
 \setlength{\unitlength}{5pt}
$$\begin{picture}(40,24)(0,-2)
\put(10,0){\line(1,0){30}}
\put(10,0){\line(0,1){20}}
\put(10,20){\line(1,0){30}}
\put(40,0){\line(0,1){20}}
\put(20,0){\vector(0,1){20}}
\put(30,20){\vector(0,-1){20}}
\qbezier(15,10)(15,15)(25,15)
\qbezier(25,15)(35,15)(35,10)
\qbezier(35,10)(35,5)(25,5)
\qbezier(25,5)(15,5)(15,10)
\put(15,17){$1$}
\put(25,17){$2$}
\put(35,17){$3$}
\put(17,10){$4$}
\put(25,10){$5$}
\put(32,10){$6$}
\put(25,2){$7$}
\put(19,21){$u$}
\put(29,-2){$v$}
\put(15,10){\vector(0,1){0}}
\put(12.5,9){$w$}
\end{picture}
$$
 has weight
$$W\left(\begin{matrix}1&2\\4&5\end{matrix}\,\bigg|\,w,u\right)\cdot W\left(\begin{matrix}3&6\\2&5\end{matrix}\,\bigg|\,v,w\right)\cdot
W\left(\begin{matrix}1&4\\7&5\end{matrix}\,\bigg|\,u,w\right)\cdot W\left(\begin{matrix}3&7\\6&5\end{matrix}\,\bigg|\,w,v\right). $$

 \section{The Yang--Baxter equation}
 
 In the models of interest to us, the 
 weights satisfy the \emph{Yang--Baxter equation}, which can be viewed as
 an integrability criterion.\footnote{To be slightly more precise, Baxter's elliptic SOS model is closely related to a one-dimensional quantum mechanical model known as the XYZ spin chain. Using the Yang--Baxter equation, one can find an infinite family of operators commuting with the Hamiltonian of the spin chain. This is a quantum analogue of Liouville integrability, where there exists a maximal set of invariants that Poisson commute with the classical Hamiltonian.} 
Roughly speaking, the Yang--Baxter equation gives a natural way to make sense of triple crossings. 
More precisely, we want to allow the type of crossing to the
 left in the following picture, but not the one to the right. 
$$ \begin{picture}(60,20)
\put(2.93,2.93){\vector(1,1){14.14}}
\put(10,0){\vector(0,1){20}}
\put(17.07,2.93){\vector(-1,1){14.14}}
\put(32.93,2.93){\vector(1,1){14.14}}
\put(40,20){\vector(0,-1){20}}
\put(47.07,2.93){\vector(-1,1){14.14}}
\end{picture}
$$

Imagine that our triple crossing is composed of three single crossings viewed from a distance. This can happen in two ways, as
illustrated in the following picture. We have also introduced symbols for the adjacent heights and the rapidities. 
$$ \begin{picture}(60,22)
\put(32.93,2.93){\vector(1,1){14.14}}
\put(37,0){\vector(0,1){20}}
\put(47.07,2.93){\vector(-1,1){14.14}}
\put(2.93,2.93){\vector(1,1){14.14}}
\put(13,0){\vector(0,1){20}}
\put(17.07,2.93){\vector(-1,1){14.14}}
\put(35,17){$a$}
\put(39.5,13.5){$b$}
\put(43,9.5){$c$}
\put(39.5,5.5){$d$}
\put(35,2){$e$}
\put(34,9.5){$f$}
\put(37.5,9.5){$x$}
\put(14,17){$b$}
\put(9.5,13.5){$a$}
\put(6,9.5){$f$}
\put(9.5,5.5){$e$}
\put(14,2){$d$}
\put(15,9.5){$c$}
\put(11,9.5){$x$}
\put(31.6,17.7){$w$}
\put(36.3,20.5){$v$}
\put(47.5,17.7){$u$}
\put(17.4,17.7){$u$}
\put(12.2,20.5){$v$}
\put(1.5,17.7){$w$}
\end{picture}$$
We now postulate that the corresponding two systems have
the same partition function. We allow $a,\dots,f$ to be fixed, for instance by boundary conditions, but  sum over all possibilities for $x$ (as before, we assume that the resulting sums are finite). This gives the Yang--Baxter equation in the form
\begin{multline}\label{yb}\sum_x W\bigg(\begin{matrix}a&b\\x&c\end{matrix}\,\bigg|\,u,v\bigg)
W\bigg(\begin{matrix}f&a\\e&x\end{matrix}\,\bigg|\,u,w\bigg)
W\bigg(\begin{matrix}e&x\\d&c\end{matrix}\,\bigg|\,v,w\bigg)\\
=\sum_xW\bigg(\begin{matrix}f&a\\x&b\end{matrix}\,\bigg|\,v,w\bigg)
W\bigg(\begin{matrix}x&b\\d&c\end{matrix}\,\bigg|\,u,w\bigg)
W\bigg(\begin{matrix}f&x\\e&d\end{matrix}\,\bigg|\,u,v\bigg).
\end{multline}

There is another natural relation for Boltzmann weights known as the unitarity relation.
Pictorially, it means that two consecutive crossings of rapidity lines cancel, that is,
that the systems
 \setlength{\unitlength}{4pt}
\begin{equation}\label{urp}\begin{picture}(60,22)
\qbezier(0,0)(20,10)(0,20)
\qbezier(15,0)(-5,10)(15,20)
\put(7,18){$a$}
\put(7,9.5){$x$}
\put(12,9.5){$b$}
\put(7,1){$c$}
\put(2,9.5){$d$}
\put(0,20){\vector(-2,1){0}}
\put(15,20){\vector(2,1){0}}
\put(-2,19){$u$}
\put(16,19){$v$}
\put(40,0){\vector(0,1){20}}
\put(50,0){\vector(0,1){20}}
\put(39.2,21){$u$}
\put(49.2,21){$v$}
\put(45,18){$a$}
\put(45,1){$c$}
\put(37,9.5){$d$}
\put(52,9.5){$b$}
\end{picture}
  \end{equation}
  have the same partition function. Note that if $a\neq c$ the system on the right does not exist (heights may only change as we cross a rapidity line) so we set its weight to zero. If $a=c$, there are no crossings, so the weight is the empty product $1$.  On the left, we should as before sum over $x$. This leads to the condition
  \begin{equation}\label{ur}\sum_x W\bigg(\begin{matrix}d&x\\c&b\end{matrix}\,\bigg|\,u,v\bigg)W\bigg(\begin{matrix}d&a\\x&b\end{matrix}\,\bigg|\,v,u\bigg)=\delta_{ac}.
  \end{equation}


\section{The $R$-operator}

Focusing on the relative changes in height, 
it is often useful to rewrite the Boltzmann weights in the notation
$$W\bigg(\begin{matrix}a&b\\c&d\end{matrix}\,\bigg|\,u,v\bigg)=R^{c-a,d-c}_{d-b,b-a}(a|u,v),$$
or equivalently
 $$R^{mn}_{kl}(\lambda|u,v)=W\bigg(\begin{matrix}\lambda &\lambda+l\\\lambda+m&\lambda+N\end{matrix}\,\bigg|\,u,v\bigg),\qquad k+l=m+n=N.$$

We want to view $R^{mn}_{kl}$ as matrix elements of an operator, which we can think of as representing a crossing of two rapidity lines. To this end, let $\Lambda$ be the set of allowed 
differences between adjacent heights, $V$ a vector space with basis $(e_\lambda)_{\lambda\in \Lambda}$
and $R(\lambda|u,v)$ the operator on $V\otimes V$ defined by
$$R(\lambda|u,v)(e_k\otimes e_l)=\sum_{m+n=k+l}R^{kl}_{mn}(\lambda|u,v)\,(e_m\otimes e_n).$$
In \eqref{yb}, let 
$$ i=e-f, \quad j=d-e,\quad k=c-d,\quad l=c-b,\quad m=b-a,\quad n=a-f$$
denote the various height differences encountered when travelling around the triple crossing. 
We replace $x$ by $c-x$ on the left-hand side and $x+f$ on the right  and finally let $f=\lambda$. This gives
\begin{multline}\label{cyb}\sum_{x} R^{l+m-x,x}_{lm}(\lambda+n|u,v)R^{i,j+k-x}_{l+m-x,n}(\lambda|u,w) R^{jk}_{x,j+k-x}(\lambda+i|v,w)\\
=\sum_{x} R^{x,m+n-x}_{mn}(\lambda|v,w)R^{i+j-x,k}_{l,m+n-x}(\lambda+x|u,w)R^{ij}_{i+j-x,x}(\lambda|u,v).
\end{multline}
We consider the two sides as matrix elements for operators on $V\otimes V\otimes V$,
where we act on $e_i\otimes e_j\otimes e_k$ and pick out the coefficient of $e_l\otimes e_m\otimes e_n$.
The resulting  coordinate-free form of \eqref{yb} was first given by Felder \cite{f}. It can be written
\begin{multline*}R^{12}(\lambda+h^3|u,v)R^{13}(\lambda|u,w)R^{23}(\lambda+h^1|v,w)\\
=R^{23}(\lambda|v,w)R^{13}(\lambda+h^2|u,w)R^{12}(\lambda|u,v). \end{multline*}
Here, the upper indices determine the spaces in the tensor product where the operators are acting, and $h$ is the grading operator $he_j=je_j$. 
For instance,
\begin{align*}R^{12}(\lambda|u,v)(e_i\otimes e_j\otimes e_k)&=(R(\lambda|u,v) (e_i\otimes e_j))\otimes e_k\\
&=\sum_x R^{ij}_{i+j-x,x}(\lambda|u,v)\,e_{i+j-x}\otimes e_{x}\otimes e_k,\\
 R^{23}(\lambda+h^1|v,w)(e_i\otimes e_j\otimes e_k)&=e_i\otimes(R(\lambda+i|v,w)(e_j\otimes e_k))\\
&=\sum_x R^{jk}_{x,j+k-x}(\lambda+i|v,w)\,e_i\otimes e_{x}\otimes e_{j+k-x}.\end{align*}

We mention that there are several versions of the Yang--Baxter equation. The one encountered here is sometimes called the
 quantum dynamical Yang--Baxter equation.
Important special cases include the quantum Yang--Baxter equation, when $R(\lambda|u,v)$ is independent of $\lambda$,
and the hexagon identity for $6j$-symbols (Racah and $q$-Racah polynomials), when $R(\lambda|u,v)$ is independent of
$u$ and $v$.

\begin{problem}\label{up}
Show that the unitarity relation \eqref{ur} can be expressed as
$$R^{12}(\lambda|u,v) R^{21}(\lambda|v,u)=\operatorname{Id}. $$
\end{problem}

\section{The elliptic SOS model}\label{ess}

The elliptic SOS model (also called eight-vertex-solid-on-solid model) was introduced by Baxter \cite{bx} in his solution of a related model known as the eight-vertex model. In the elliptic SOS model, neighbouring heights differ by exactly $1$. In particular, if one height is $a$ then necessarily all heights are in $a+\mathbb Z$. 
The Boltzmann weights only depend on the quotient of the two rapidities at a crossing. For this reason, we write
\begin{equation}\label{rq} W\bigg(\begin{matrix}a&b\\c&d\end{matrix}\,\bigg|\,u,v\bigg) =W\bigg(\begin{matrix}a&b\\c&d\end{matrix}\,\bigg|\,u/v\bigg),\qquad
R_{kl}^{mn}(\lambda|u,v)=R_{kl}^{mn}(\lambda|u/v).\end{equation}

The indices in \eqref{rq} satisfy
$k,\,l,\,m,\, n\in\{\pm 1\}$ and $k+l=m+n$, which has six solutions. Writing $\pm$ instead of $\pm 1$, the Boltzmann weights are given by
\begin{subequations}\label{esw}
\begin{align}R^{++}_{++}(\lambda|u)&=R^{--}_{--}(\lambda|u)=1, \\
R^{+-}_{+-}(\lambda|u)&=\frac{\theta(q^{1-\lambda},u;p)}{\theta(q^{-\lambda},uq;p)}, & R^{-+}_{-+}(\lambda|u)&=\frac{\theta(q^{\lambda+1},u;p)}{\theta(q^\lambda,uq;p)},\\
R^{+-}_{-+}(\lambda|u)&=\frac{\theta(q,q^{-\lambda} u;p)}{\theta(q^{-\lambda},uq;p)}, & R^{-+}_{+-}(\lambda|u)&=\frac{\theta(q,q^\lambda u;p)}{\theta(q^\lambda,uq;p)}.
\end{align}
\end{subequations}
Here, $p$ and $q$ are fixed parameters with $|p|<1$. We will assume \eqref{ndpq}, though the case when $q$ is a root of unity is in fact of special interest. 
To make physical sense, one should choose the parameters so that the Boltzmann weights are positive, but that will not be a concern for us.
It will be useful to note the symmetries
\begin{equation}\label{rsy}
R_{kl}^{mn}(\lambda|u)=R_{-k,-l}^{-m,-n}(-\lambda|u)=R^{nm}_{lk}(-\lambda-k-l|u)
\end{equation}
or, equivalently,
\begin{equation}W\bigg(\begin{matrix}a&b\\c&d\end{matrix}\,\bigg|\,u\bigg)=W\bigg(\begin{matrix}-a&-b\\-c&-d\end{matrix}\,\bigg|\,u\bigg)
\label{bws}=W\bigg(\begin{matrix}d&b\\c&a\end{matrix}\,\bigg|\,u\bigg). \end{equation}

\begin{theorem}\label{bdt}
The Boltzmann weights \eqref{esw} satisfy the  Yang--Baxter equation \eqref{cyb}.
\end{theorem}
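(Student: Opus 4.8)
The plan is to verify \eqref{cyb} by direct computation, exploiting the fact that in the elliptic SOS model every height difference lies in $\{\pm1\}$, so that each external index $i,j,k,l,m,n$ and the summation variable $x$ takes only the two values $\pm1$. Thus \eqref{cyb} is a finite list of identities among the explicit theta-quotients \eqref{esw}, which I would organize through the operator form on $V\otimes V\otimes V$, where $V$ has basis $e_{+1},e_{-1}$. Each weight in \eqref{esw} conserves the index sum $k+l=m+n$, so every factor---and hence each side of the Yang--Baxter equation---commutes with the total grading $h^1+h^2+h^3$. The equation therefore block-diagonalizes according to the total spin $i+j+k\in\{3,1,-1,-3\}$.

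The two extreme blocks, of spin $\pm3$, are one-dimensional, spanned by $e_{\pm1}\otimes e_{\pm1}\otimes e_{\pm1}$; on them every factor is a weight of the type $R^{\pm\pm}_{\pm\pm}=1$, so both sides reduce to the identity and the equation holds trivially. The sign-reversal symmetry $R^{mn}_{kl}(\lambda|u)=R^{-m,-n}_{-k,-l}(-\lambda|u)$ from \eqref{rsy} intertwines the spin $+1$ and spin $-1$ blocks, so it suffices to treat a single three-dimensional block, say the span of $e_{+1}\otimes e_{+1}\otimes e_{-1}$, $e_{+1}\otimes e_{-1}\otimes e_{+1}$ and $e_{-1}\otimes e_{+1}\otimes e_{+1}$. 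On this block each side of \eqref{cyb} is a $3\times3$ matrix whose entries are products, and two-term sums over $x=\pm1$, of the weights \eqref{esw}, with the dynamical argument shifted by $\pm1$ and the rapidity ratios $u/v$, $u/w$, $v/w$ distributed among the three factors.

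I would then match the two matrices entry by entry. The diagonal entries, where the internal line is forced and the sum over $x$ collapses to a single term, agree after cancelling common theta factors and applying the quasi-periodicity \eqref{tqp}. The off-diagonal entries are the substantial ones: there the sums over $x=\pm1$ contribute genuinely, and after clearing the common denominator each entry reduces to an identity of the form $\theta(\alpha;p)\theta(\beta;p)=\theta(\gamma;p)\theta(\delta;p)+\kappa\,\theta(\epsilon;p)\theta(\zeta;p)$ in the arguments $q^{\pm\lambda}$, $u$, $v$, $w$. My expectation is that each such relation is, after substituting the appropriate rapidity ratio for the free variable and identifying the parameters $a,b,c$, precisely an instance of Weierstrass's three-term relation \eqref{ttr}.

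The main obstacle will be the bookkeeping within the three-dimensional block: tracking how the dynamical shift $\lambda\mapsto\lambda\pm1$ and the three rapidity ratios propagate through the product of $R$-factors, and then recognizing each resulting two-term theta equation as a specialization of the single three-term identity \eqref{ttr}. Once the substitution matching a given off-diagonal entry to \eqref{ttr} has been found, the verification is mechanical; locating that substitution---rather than any analytic difficulty---is where the real content of the theorem lies.
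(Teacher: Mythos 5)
Your overall strategy (block-diagonalize by total spin, kill the spin $\pm3$ blocks trivially, use the sign-reversal symmetry in \eqref{rsy} to identify the spin $+1$ and spin $-1$ blocks, and then check the remaining $3\times3$ block entry by entry against theta identities) is essentially the paper's brute-force verification, just phrased in matrix language. But your case analysis contains a genuine error, and it hides exactly the one identity that carries the real difficulty. You claim that on the diagonal entries ``the internal line is forced and the sum over $x$ collapses to a single term.'' This is false for the middle diagonal entry, with incoming and outgoing indices both equal to $(+,-,+)$: there \emph{both} values $x=\pm1$ are admissible on \emph{both} sides of \eqref{cyb}, so that entry is a four-term identity (two terms on each side), namely \eqref{ftt} in the paper. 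Conversely, it is the six off-diagonal entries that are three-term identities, and those are indeed single instances of Weierstrass's relation \eqref{ttr}, as you predict; the two outer diagonal entries are the trivial one-term-equals-one-term cases.

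The four-term identity cannot be ``precisely an instance'' of \eqref{ttr}, since \eqref{ttr} has three terms; so your proposed endgame does not apply to it, and this is where an additional idea is needed. The paper handles it by interpolation: after clearing denominators, both sides of \eqref{ftt} are theta functions of $v$ satisfying the same quasi-periodicity, so by Proposition \ref{elp} it suffices to verify equality at two suitably generic points; the choice $v=u$ gives a trivial identity, and $v=q^{\lambda}w$ reduces, after simplification, to a special case of \eqref{ttr}. (Alternatively, one can rearrange the four terms and apply \eqref{ttr} twice, which is the content of the exercise following the paper's proof.) Without recognizing that this four-term case exists and supplying such an argument for it, your verification is incomplete: the mechanical matching you describe would stall precisely at the entry you dismissed as trivial.
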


Unfortunately, we cannot present an elegant proof of
Theorem~\ref{bdt}, so we resort to brute force verification. A priori, we need to check \eqref{cyb} for 
each choice of $i,j,k,l,m,n\in\{\pm 1\}$ with $i+j+k=l+m+n$. This leads to $20$ equations. Fortunately, the number can be reduced by exploiting symmetries of the Boltzmann weights.
Indeed, applying the first equation in \eqref{rsy}  to
\eqref{cyb} and replacing $\lambda$ by $-\lambda$ gives back \eqref{cyb} with 
$(i,j,k,l,m,n)\mapsto(-i,-j,-k,-l,-m,-n).$
 Thus, we may assume
$i+j+k>0$, leaving us with $10$ equations. Similarly, applying the second equation in \eqref{rsy} to \eqref{cyb} and replacing $(\lambda,u,v,w)\mapsto(-\lambda-i-j-k,w^{-1},v^{-1},u^{-1})$ leads to \eqref{cyb} after the permutations $i\leftrightarrow k$ and $l\leftrightarrow n$.
Thus, we may assume $i\geq k$ and if $i=k$ we may in addition assume $l\geq n$. Now, we are down to the six equations given in the following table, where we also write $x_L$ and $x_R$ for the admissible values
of $x$ at the left-hand and right-hand side of \eqref{cyb}.
\begin{center}\begin{tabular}{cccccccc}
$i$&$j$&$k$&$l$&$m$&$n$&$x_L$&$x_R$\\
\hline
+&+&+&+&+&+&+&+\\
+&+&-&+&+&-&+&+\\
+&+&-&+&-&+&$\pm$&+\\
+&+&-&-&+&+&$\pm$&+\\
+&-&+&+&+&-&+&$\pm$\\
+&-&+&+&-&+&$\pm$&$\pm$
\end{tabular}\end{center}
In particular, the number of terms in these identities are, respectively, $2$, $2$, $3$, $3$, $3$ and $4$.
The two-term identities are trivial. It is easy to check that the  three-term identities are all equivalent to
Weierstrass's identity \eqref{ttr}. Finally, the four-term 
identity has the form
\begin{align*}
&R^{-+}_{+-}(\lambda+1|u/v)R^{+-}_{-+}(\lambda|u/w)R^{-+}_{+-}(\lambda+1|v/w)\\
&\quad+R^{+-}_{+-}(\lambda+1|u/v)R^{++}_{++}(\lambda|u/w)R^{-+}_{-+}(\lambda+1|v/w)\\
&=R^{+-}_{-+}(\lambda|v/w)R^{-+}_{+-}(\lambda+1|u/w)R^{+-}_{-+}(\lambda|u/v)\\
&\quad+R^{-+}_{-+}(\lambda|v/w)R^{++}_{++}(\lambda-1|u/w)R^{+-}_{+-}(\lambda|u/v)
\end{align*}
or, equivalently,
\begin{multline}\label{ftt}
\frac{\theta(q;p)^3\theta(q^{\lambda+1}u/v,q^{-\lambda}u/w,q^{\lambda+1}v/w;p)}{\theta(q^{\lambda+1},uq/v,q^{-\lambda},uq/w,q^{\lambda+1},vq/w;p)}+\frac{\theta(q^{-\lambda},u/v,q^{\lambda+2},v/w;p)}{\theta(q^{-\lambda-1},uq/v,q^{\lambda+1},vq/w;p)}\\
=\frac{\theta(q;p)^3\theta(q^{-\lambda}v/w,q^{\lambda+1}u/w,q^{-\lambda}u/v;p)}{\theta(q^{-\lambda},vq/w,q^{\lambda+1},uq/w,q^{-\lambda},uq/v;p)}
+\frac{\theta(q^{\lambda+1},v/w,q^{1-\lambda},u/v;p)}{\theta(q^{\lambda},vq/w,q^{-\lambda},uq/v;p)}.
\end{multline}
We  prove this by  interpolation.
We first consider both sides as functions of $v$.
By Proposition \ref{elp}, it is enough to verify the identity for two values, $v=v_1$ and $v=v_2$, such that neither $v_1/v_2$ nor
$v_1v_2/uw$ is in $p^{\mathbb Z}$. We choose $v_1=u$, which gives a trivial identity, and $v_2=q^\lambda w$, which after simplification gives
$$-q^{\lambda-1}\,\theta(q;p)^3\theta(q^{2\lambda+1};p)+\theta(q^\lambda;p)^3\theta(q^{\lambda+2};p)= \theta( q^{\lambda+1};p)^3\theta( q^{\lambda-1};p).$$
This is a special case of Weierstrass's identity \eqref{ttr}.

\begin{problem}
Give an alternative proof of \eqref{ftt} by rearranging the terms and using \eqref{ttr}. 
\end{problem}

\begin{problem}\label{ue}
Show directly that the unitarity relation   \eqref{ur} holds in the elliptic SOS model.
(We will see another way to do this in Exercise~\ref{usp}.)
\end{problem}

\section{Fusion and elliptic hypergeometry}
\label{fhs}

For Baxter's model, adjacent heights differ by exactly $1$. One can  obtain less restrictive models by applying a so called fusion procedure. Date et al.\ \cite{d} found that the Boltzmann weights of these fused models are given by
${}_{12}V_{11}$-sums. We will  give a new approach to this result, by relating it to our construction of ${}_{12}V_{11}$-sums as connection coefficients described in \S \ref{tves}.
  
  As a starting point, the following  result gives a description of Baxter's Boltzmann weights as connection coefficients in a two-dimensional space of theta functions.

\begin{proposition}\label{gbp}
For $c=a\pm 1$, let $\phi(a,c|u)$ denote the function
$$\phi(a,c|u)(x)=\frac{\theta(q^{a(c-a)/2}\sqrt u x^\pm;p)}{ q^{a(c-a)/2}\sqrt u}. $$
Then, the Boltzmann weights of the elliptic SOS model are determined by the expansion
\begin{equation}\label{gbe}\phi(b,d|u)=\sum_{c} W\bigg(\begin{matrix}a&b\\c&d\end{matrix}\,\bigg|\,u\bigg)\phi(a,c|uq).\end{equation}
\end{proposition}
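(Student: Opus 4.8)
The plan is to read \eqref{gbe} as the expansion of one element of the two–dimensional space $V$ from \S\ref{ttrs} in a basis of that same space, and then to identify the expansion coefficients with the weights \eqref{esw} through a direct application of Weierstrass's relation \eqref{ttr}. First I would record the bookkeeping. Writing $w=q^{a(c-a)/2}\sqrt u$, we have $\phi(a,c\,|\,u)(x)=\theta(wx^\pm;p)/w$, and a short computation with \eqref{tqp} (together with its $x\mapsto p^{-1}x$ companion) shows that $\theta(wx^\pm;p)$ satisfies $f(px)=f(x)/px^2$, so every $\phi(a,c\,|\,u)$ lies in $V$. The two reindexings that make the scheme work,
$$\phi(a+1,a+2\,|\,u)=\phi(a,a+1\,|\,uq),\qquad \phi(a-1,a-2\,|\,u)=\phi(a,a-1\,|\,uq),$$
hold because the factor $q^{1/2}$ hidden in $\sqrt{uq}$ exactly compensates the shift of $a$ in the exponent; e.g.\ both sides of the first identity have parameter $w=q^{(a+1)/2}\sqrt u$.

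Since heights change by $\pm1$ across each rapidity line, the admissible data satisfy $b=a\pm1$, $d=b\pm1$, hence $d-a\in\{-2,0,2\}$. When $d-a=\pm2$ only one value of $c$ is admissible, and the reindexings above show that $\phi(b,d\,|\,u)$ equals the single surviving basis function, forcing the coefficient $1=R^{++}_{++}=R^{--}_{--}$. The substantive case is $d=a$, where $c$ can be either $a+1$ or $a-1$. Here I would invoke the basis criterion for $V$ from \S\ref{ttrs}: the pair $\theta(w_+x^\pm;p),\ \theta(w_-x^\pm;p)$ with $w_+=q^{(a+1)/2}\sqrt u$, $w_-=q^{(1-a)/2}\sqrt u$ is a basis provided $w_+w_-=qu$ and $w_+/w_-=q^a$ lie outside $p^{\mathbb Z}$, which holds for generic $u$ and, by \eqref{ndpq}, for $a\neq 0$. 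Under this genericity the expansion \eqref{gbe} exists and is unique — the sense in which it \emph{determines} the weights.

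To extract the coefficient of $\phi(a,a+1\,|\,uq)=\theta(w_+x^\pm;p)/w_+$, I would evaluate at $x=w_-$, where $\theta(w_-x^\pm;p)$ vanishes since $\theta(1;p)=0$ by Corollary \ref{tzc}; this is exactly the point-evaluation device used to derive \eqref{ttr}. Writing $w_0$ for the parameter of $\phi(b,d\,|\,u)$, this yields the coefficient $\tfrac{w_+}{w_0}\,\theta(w_0w_-^\pm;p)/\theta(w_+w_-^\pm;p)$. Substituting the explicit $q$-powers and simplifying with $\theta(1/x;p)=-x^{-1}\theta(x;p)$ collapses each coefficient to the matching entry of \eqref{esw}. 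For instance, for $(b,d)=(a+1,a)$ one has $w_0=q^{-(a+1)/2}\sqrt u$, and the coefficient equals
$$q^{a+1}\,\frac{\theta(q^{-a}u,q^{-1};p)}{\theta(qu,q^{a};p)}=R^{+-}_{-+}(a\,|\,u),$$
with companion coefficient $R^{-+}_{-+}(a\,|\,u)$; the case $(b,d)=(a-1,a)$ is handled identically and produces $R^{+-}_{+-}$ and $R^{-+}_{+-}$.

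The main obstacle is purely computational: keeping the half-integer powers of $q$ in $w_0,w_+,w_-$ straight and carrying the normalizing factors $1/w$ through \eqref{ttr} so that the $q$-powers telescope to the precise form of \eqref{esw}. There is no conceptual difficulty once the $\phi$'s are placed in $V$ and the expansion is recognized as an instance of the three-term relation.
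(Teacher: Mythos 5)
Your proposal is correct and follows essentially the same route as the paper: the paper's proof likewise fixes $a$, treats the four admissible choices of $(b,d)$, observes that the two cases $d=b\pm1=a\pm2$ are trivial identities of single theta functions, and recognizes the two cases $d=a$ as expansions of the form \eqref{we} in the two-dimensional space of \S\ref{ttrs}, whose coefficients---extracted by exactly the point-evaluation device you use---reproduce the weights \eqref{esw}. Your additional bookkeeping (placing the $\phi$'s in $V$, the basis/genericity criterion ensuring uniqueness, and the explicit verification that the coefficient collapses to $R^{+-}_{-+}(a\,|\,u)$) only makes explicit what the paper leaves implicit.
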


\begin{proof}
Fixing $a$, there are four possible choices for the pair $(b,d)$, corresponding to the identities
\begin{align*} \phi(a+1,a|u)&=\sum_{j=\pm 1}W\bigg(\begin{matrix}a&a+1\\a+j&a\end{matrix}\,\bigg|\,u\bigg)\phi(a,a+j|uq),\\
\phi(a-1,a|u)&=\sum_{j=\pm 1}W\bigg(\begin{matrix}a&a-1\\a+j&a\end{matrix}\,\bigg|\,u\bigg)\phi(a,a+j|uq),\\
\phi(a+1,a+2|u)&=W\bigg(\begin{matrix}a&a+1\\a+1&a+2\end{matrix}\,\bigg|\,u\bigg)\phi(a,a+1|uq),\\
\phi(a-1,a-2|u)&=W\bigg(\begin{matrix}a&a-1\\a-1&a-2\end{matrix}\,\bigg|\,u\bigg)\phi(a,a-1|uq)
\end{align*}
or, equivalently,
\begin{align*}
\theta(\sqrt u q^{-\frac{\lambda+1}{2}}x^\pm;p)&=q^{-\lambda-1}R^{+-}_{-+}(\lambda|u)\theta(\sqrt u q^{\frac{\lambda+1}{2}}x^\pm;p)\\
&\quad+q^{-1}R^{-+}_{-+}(\lambda|u)\theta(\sqrt u q^{\frac{1-\lambda}{2}}x^\pm;p),\\
\theta(\sqrt u q^{\frac{\lambda-1}{2}}x^\pm;p)&=q^{-1}R^{+-}_{+-}(\lambda|u)\theta(\sqrt u q^{\frac{\lambda+1}{2}}x^\pm;p)\\
&\quad+q^{\lambda-1}R^{-+}_{+-}(\lambda|u)\theta(\sqrt u q^{\frac{1-\lambda}{2}}x^\pm;p),\\
\theta(\sqrt u q^{\frac{\lambda+1}{2}}x^\pm;p)&=R^{++}_{++}(\lambda|u)\theta(\sqrt u q^{\frac{\lambda+1}{2}}x^\pm;p),\\
\theta(\sqrt u q^{\frac{1-\lambda}{2}}x^\pm;p)&=R^{--}_{--}(\lambda|u)\theta(\sqrt u q^{\frac{1-\lambda}{2}}x^\pm;p).
\end{align*}
 The first two expansions are of the form \eqref{we}, so the coefficients are given by Weierstrass's identity \eqref{ttr}.
 The second two identities are trivial. This leads to the explicit expressions for Boltzmann weights given in  \eqref{esw}. 
\end{proof}

Fusion of the SOS model corresponds to iterating \eqref{gbe}. We  have
\begin{align*}\phi(b,d|u)&=\sum_f W\bigg(\begin{matrix}e&b\\f&d\end{matrix}\,\bigg|\,u\bigg)\phi(e,f|uq)\\
&=\sum_{fc} W\bigg(\begin{matrix}a&e\\c&f\end{matrix}\,\bigg|\,uq\bigg)W\bigg(\begin{matrix}e&b\\f&d\end{matrix}\,\bigg|\,u\bigg)  \phi(a,c|uq^2).
  \end{align*}
As the two functions $\phi(a,a\pm 1|uq^2)$ are generically linearly independent, it follows that
\begin{equation}\label{efe} \sum_{f} W\bigg(\begin{matrix}a&e\\c&f\end{matrix}\,\bigg|\,uq\bigg)W\bigg(\begin{matrix}e&b\\f&d\end{matrix}\,\bigg|\,u\bigg) 
\end{equation}
is independent of $e$, as long as $|a-e|=|b-e|=1$. Equivalently, the partition function for
the system
$$\begin{picture}(35,25)
\put(0,10){\vector(1,0){30}}
\put(10,0){\vector(0,1){20}}
\put(20,0){\vector(0,1){20}}
\put(31,9.5){$u$}
\put(9.3,21){$v$}
\put(18.5,21){$qv$}
\put(4,14){$a$}
\put(24,14){$b$}
\put(4,4){$c$}
\put(14,14){$e$}
\put(24,4){$d$}
 \end{picture} $$
where we sum over the admissible heights of the empty slot, is independent of $e$. Thus, we can forget about $e$, and
think of the vertical lines as coalescing. We may
view this sum as a Boltzmann weight for a \emph{fused} SOS-model, for which $a-c$ and $b-d$ are in $\{\pm 1\}$, whereas $b-a$ and $d-c$ are in $\{-2,0,2\}$.

 Next, we observe that fusion works also in the vertical direction. Indeed,  
 by  \eqref{bws}, the partition function for
$$\begin{picture}(25,35)
\put(0,10){\vector(1,0){20}}
\put(0,20){\vector(1,0){20}}
\put(10,0){\vector(0,1){30}}
\put(21,19.5){$u$}
\put(21,9.5){$qu$}
\put(9.3,31){$v$}
\put(4,24){$a$}
\put(14,24){$b$}
\put(4,4){$c$}
\put(14,14){$f$}
\put(14,4){$d$}
 \end{picture} $$
is independent of $f$.  Iterating fusion in both directions, we find that the partition function for
\setlength{\unitlength}{3pt}
\begin{equation}\label{if}\begin{picture}(55,55)
\put(0,10){\vector(1,0){50}}
\put(0,30){\vector(1,0){50}}
\put(0,40){\vector(1,0){50}}
\put(10,0){\vector(0,1){50}}
\put(20,0){\vector(0,1){50}}
\put(40,0){\vector(0,1){50}}
\put(51,39.5){$u$}
\put(51,29.5){$qu$}
\put(51,9.5){$q^{M-1}u$}
\put(9.3,52){$v$}
\put(18.3,52){$qv$}
\put(38.3,52){$q^{N-1}v$}
\put(4,44){$a$}
\put(44,44){$b$}
\put(4,4){$c$}
\put(44,34){$f_1$}
\put(44,4){$d$}
\put(14,44){$e_1$}
\put(29,44){$\dotsm$}
\put(44,19){$\vdots$}
 \end{picture} \end{equation}
where we fix the corners together with the top and right boundary and sum over the admissible heights of all other squares, 
is independent of the interior boundary heights $e_1,\dots,e_{N-1}$, $f_1,\dots,f_{M-1}$, provided that  adjacent boundary heights differ by $1$. We denote the resulting quantity 
$$W_{MN}\bigg(\begin{matrix}a&b\\c&d\end{matrix}\,\bigg|\,u/v\bigg). $$
Here,
$a-c$ and $b-d$ are in $\{-M,2-M,\dots,M\}$ whereas $a-b$ and $c-d$ are in  $\{-N,2-N,\dots,N\}$. 
We view $W_ {MN}$ as Boltzmann weights for a model, where each rapidity line is labelled by a multiplicity
that gives the maximal allowed height difference across that line.

In order to identify the  fused Boltzmann weights $W_{MN}$ with elliptic hypergeometric sums, we will need the following fact.

\begin{lemma}\label{pil} Assuming $|b-a|=|c-b|=1$,
the product $\phi(a,b|uq^{-1})\phi(b,c|u)$ is independent of $b$.
\end{lemma}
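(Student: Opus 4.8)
The plan is to reduce the statement to a single nontrivial case and then verify it by direct substitution of the definition of $\phi$ from Proposition~\ref{gbp}. First I would observe that the hypothesis $|b-a|=|c-b|=1$ forces $c-a=(b-a)+(c-b)\in\{-2,0,2\}$. When $c-a=\pm 2$ the intermediate height $b$ is uniquely determined (it must equal $a\pm 1$), so there is nothing to prove. Thus the only case with genuine content is $c=a$, where $b$ can be either $a+1$ or $a-1$, and I must show that the product $\phi(a,b|uq^{-1})\phi(b,c|u)$ takes the same value for both choices.

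For the computation I would write $\epsilon_1=b-a$ and $\epsilon_2=c-b$, so that $\epsilon_1,\epsilon_2\in\{\pm 1\}$ and, in the case $c=a$, $\epsilon_2=-\epsilon_1$. Substituting the definition, the factor $\phi(a,b|uq^{-1})$ has a theta-argument whose $q$-exponent is $(a\epsilon_1-1)/2$ (the shifted rapidity contributing the extra $-1$ via $\sqrt{uq^{-1}}=q^{-1/2}\sqrt{u}$), while $\phi(b,c|u)$ has $q$-exponent $b\epsilon_2/2=(a\epsilon_2+\epsilon_1\epsilon_2)/2$. For $b=a+1$ (so $\epsilon_1=1$, $\epsilon_2=-1$) these two exponents are $(a-1)/2$ and $-(a+1)/2$; for $b=a-1$ (so $\epsilon_1=-1$, $\epsilon_2=1$) they are $-(a+1)/2$ and $(a-1)/2$. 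Hence the unordered pair of theta arguments is the same in both cases, and since multiplication is commutative the product of the two theta factors $\theta(q^{(a-1)/2}\sqrt{u}\,x^\pm;p)\,\theta(q^{-(a+1)/2}\sqrt{u}\,x^\pm;p)$ is identical for the two admissible values of $b$.

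It remains to check that the scalar prefactors agree. The prefactor of $\phi(a,b|uq^{-1})\phi(b,c|u)$ is $q^{(a\epsilon_1-1)/2}\sqrt{u}\cdot q^{b\epsilon_2/2}\sqrt{u}$, whose $q$-exponent sums to $(a\epsilon_1-1)/2+(a\epsilon_2+\epsilon_1\epsilon_2)/2$. Using $\epsilon_2=-\epsilon_1$ and $\epsilon_1\epsilon_2=-1$, the term $a(\epsilon_1+\epsilon_2)$ vanishes and the exponent equals $-1$ for \emph{both} choices of $b$, so the prefactor is $u/q$ in each case. Combining this with the previous paragraph, the products for $b=a+1$ and $b=a-1$ coincide, which is exactly the asserted independence of $b$.

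Since every step is an explicit evaluation, I do not anticipate any real obstacle; the only point requiring care is bookkeeping of the half-integer $q$-exponents together with the $q^{-1/2}$ coming from the shifted rapidity. The conceptual content, which I would highlight at the end, is simply that shifting the rapidity by $q^{-1}$ in the first factor exactly compensates the change of intermediate height, so that the two lattice ``paths'' $a\to a+1\to a$ and $a\to a-1\to a$ produce the same pair of theta functions.
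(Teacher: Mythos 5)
Your proposal is correct and follows essentially the same route as the paper: reduce to the case $c=a$ (since for $c-a=\pm 2$ the intermediate height $b$ is forced) and then verify the identity $\phi(a,a+1|uq^{-1})\phi(a+1,a|u)=\phi(a,a-1|uq^{-1})\phi(a-1,a|u)$ by direct substitution. The paper dismisses this last step as ``trivial to verify''; you have simply written out that verification, and your bookkeeping of the exponents and of the prefactor $u/q$ is accurate.
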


\begin{proof}
If $|a-c|=2$ there is only one admissible value for $b$ and there is nothing to prove. Else, $a=c$ and the claim is that
$$ \phi(a,a+1|uq^{-1})\phi(a+1,a|u)=\phi(a,a-1|uq^{-1})\phi(a-1,a|u).$$
This is trivial to verify.
\end{proof}

We will write
$$\phi_M(a,c|u) =\phi(a,b_1|uq^{1-M})\phi(b_1,b_2|uq^{2-M})\dotsm\phi(b_{M-1},c|u), $$
where $a-c\in\{-M,2-M,\dots,M\}$ and 
\begin{equation}\label{sd}b_1-a,\ b_2-b_1,\dots,\ c-b_{M-1}\in\{\pm 1\}.\end{equation} 
  By Lemma \ref{pil}, $\phi_M$ is independent of the parameters $b_j$. Thus, we may take the first $(M+c-a)/2$ of the numbers \eqref{sd} as 
  $1$ and the remaining $(M+a-c)/2$ as $-1$. After simplification, this gives the explicit formula
  \begin{multline*}\phi_M(a,c|u)=\frac{q^{\frac 14(c^2-a^2-M^2)}}{(\sqrt u)^M}\\
  \times(q^{(1-M+a)/2}\sqrt u x^\pm;q,p)_{(M+c-a)/2} (q^{(1-M-a)/2}\sqrt u x^\pm;q,p)_{(M+a-c)/2}. 
  \end{multline*}

We can now generalize Proposition \ref{gbp} to fused models.

\begin{theorem}\label{fbt}
The  Boltzmann weights of the fused elliptic SOS models satisfy
\begin{equation}\label{gbm}\phi_M(b,d|u)=\sum_{c} W_{MN}\bigg(\begin{matrix}a&b\\c&d\end{matrix}\,\bigg|\,u\bigg)\phi_M(a,c|uq^N).\end{equation}
\end{theorem}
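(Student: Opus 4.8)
The plan is to prove \eqref{gbm} by iterating the basic connection relation \eqref{gbe} across the whole $M\times N$ array of crossings, arranging the iteration so that the fused right-edge state $\phi_M(b,d|u)$ is pushed leftward one vertical line at a time, turning into $\phi_M(a,c|uq^N)$ while the accumulated Boltzmann weights assemble into $W_{MN}$. I would organize this in two steps: first fuse in the vertical direction (the index $M$), obtaining the case $N=1$ for all $M$; then iterate in the horizontal direction (the index $N$).

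\emph{Step 1 (one vertical line, arbitrary $M$).} Fixing an admissible path $b=r_0,r_1,\dots,r_M=d$ with $|r_j-r_{j-1}|=1$, the definition of $\phi_M$ gives
\[\phi_M(b,d|u)=\prod_{j=1}^M\phi(r_{j-1},r_j|uq^{j-M}).\]
To the $j$-th factor I apply \eqref{gbe} with rapidity $uq^{j-M}$ and top-left height $l_{j-1}$, replacing it by $\sum_{l_j}W\!\left(\begin{smallmatrix}l_{j-1}&r_{j-1}\\ l_j&r_j\end{smallmatrix}\,\big|\,uq^{j-M}\right)\phi(l_{j-1},l_j|uq^{j-M+1})$, where $l_0=a$ is fixed. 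The essential point is that the shifted rapidity $uq^{j-M+1}$ is exactly the one carried by the $j$-th factor of $\phi_M(a,c|uq)$, so the emitted theta-function factors reassemble; by Lemma \ref{pil} their product $\prod_j\phi(l_{j-1},l_j|uq^{j-M+1})=\phi_M(a,c|uq)$ is independent of the internal heights $l_1,\dots,l_{M-1}$ and may be pulled out of the sum. Writing $c=l_M$, this yields
\[\phi_M(b,d|u)=\sum_{c}\left(\sum_{l_1,\dots,l_{M-1}}\prod_{j=1}^M W\!\left(\begin{smallmatrix}l_{j-1}&r_{j-1}\\ l_j&r_j\end{smallmatrix}\,\big|\,uq^{j-M}\right)\right)\phi_M(a,c|uq),\]
and the inner sum is, by definition, the vertically fused weight $W_{M1}\!\left(\begin{smallmatrix}a&b\\c&d\end{smallmatrix}\,\big|\,u\right)$. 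Its independence from the chosen path $r_1,\dots,r_{M-1}$, guaranteed by the fusion construction, is consistent with the path-independence of the left-hand side. This proves \eqref{gbm} for $N=1$.

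\emph{Step 2 (iteration in $N$).} Reading the $N=1$ case as the statement that a height-$M$ fused edge passes a single vertical line while its rapidity is shifted by $q$, I would apply it successively with rapidities $u,uq,\dots,uq^{N-1}$, chaining the relations and summing over the intermediate fused-edge states. The left factor returns as $\phi_M(a,c|uq^N)$, and the product of the $N$ resulting weights $W_{M1}$, summed over the internal boundary heights, is the horizontal fusion of $N$ copies of $W_{M1}$. Since the fused model is well defined---iterated fusion in either order reproduces the full-array partition function of \eqref{if}, independent of all interior boundary heights---this product collapses to $W_{MN}\!\left(\begin{smallmatrix}a&b\\c&d\end{smallmatrix}\,\big|\,u\right)$, giving \eqref{gbm}.

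Beyond Proposition \ref{gbp} there is no new idea here; the content is a disciplined iteration of \eqref{gbe}, and all sums are finite since neighbouring heights differ by $1$. I therefore expect the only real difficulty to be bookkeeping: one must follow the geometric progression of rapidity shifts so that the theta-function factors recombine into $\phi_M$ at each stage, and invoke at the right moments the independence of the fused weights from the intermediate boundary heights (Lemma \ref{pil} for the $\phi_M$ on the left, the fusion construction for the coefficients) so that the accumulated sums collapse exactly to $W_{MN}$ rather than to some path-dependent expression.
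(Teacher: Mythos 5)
Your proof is correct and rests on exactly the same two ingredients as the paper's: iterated application of \eqref{gbe} across the fused array, together with the path-independence of $\phi_M$ guaranteed by Lemma \ref{pil}. The only difference is organizational---you expand the left-hand side column by column (first $N=1$ for all $M$, then induction on $N$), whereas the paper starts from the right-hand side, written out for $M=N=2$ via the definition of $W_{MN}$, and collapses the sums one crossing at a time (remarking that the general case is identical), so your argument is essentially the paper's proof run in the reverse direction.
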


\begin{proof}
For simplicity we give the proof for $M=N=2$, but it should be clear that the same argument works in general.
By definition,
\begin{align*}W_{22}\bigg(\begin{matrix}a&b\\c&d\end{matrix}\,\bigg|\,u\bigg)
&=\sum_{ghk}W\bigg(\begin{matrix}a&e\\g&h\end{matrix}\,\bigg|\,u\bigg)W\bigg(\begin{matrix}e&b\\h&f\end{matrix}\,\bigg|\,q^{-1}u\bigg)\\
&\qquad\times W\bigg(\begin{matrix}g&h\\c&k\end{matrix}\,\bigg|\,qu\bigg)W\bigg(\begin{matrix}h&f\\k&d\end{matrix}\,\bigg|\,u\bigg),
  \end{align*}
  where $e$ and $f$ are arbitrary admissible heights. 
Since $|a-g|=|c-g|=1$ we may write
  $$\phi_2(a,c|uq^2)=\phi(a,g|uq)\phi(g,c|u q^2) $$
independently of $g$. Thus, the right-hand side of \eqref{gbm} is
\begin{multline*}\sum_{cghk}W\bigg(\begin{matrix}a&e\\g&h\end{matrix}\,\bigg|\,u\bigg)W\bigg(\begin{matrix}e&b\\h&f\end{matrix}\,\bigg|\,q^{-1}u\bigg)\\
\times W\bigg(\begin{matrix}g&h\\c&k\end{matrix}\,\bigg|\,qu\bigg)W\bigg(\begin{matrix}h&f\\k&d\end{matrix}\,\bigg|\,u\bigg)\phi(a,g|qu)\phi(g,c| q^2u). \end{multline*}
Computing this sum by repeated application  of \eqref{gbe}, first for the sum over $c$, then for $g$ and $k$ and finally for $h$, we eventually arrive at
$$\phi(b,f|q^{-1}u)\phi(f,d|u)=\phi_2(b,d|u).$$
\end{proof}

We now observe that \eqref{gbm} is a special case of the expansion \eqref{ere}. Explicitly, 
\begin{multline*} W_{MN}\bigg(\begin{matrix}a&b\\c&d\end{matrix}\,\bigg|\,u\bigg)=q^{\frac 14(a^2+d^2-b^2-c^2-2MN)} \\
\times R_{(M+d-b)/2}^{(M+c-a)/2}(q^{\frac{1-M+b}2}\sqrt u,q^{\frac{1-M-b}2}\sqrt u,q^{\frac{1-M+N+a}2}\sqrt u,q^{\frac{1-M+N-a}2}\sqrt u;M).
\end{multline*}
Thus, it follows from \eqref{rkl} that $W_{MN}$ can be written as an elliptic hypergeometric sum.

Finally, we mention  that the fused Boltzmann weights satisfy the Yang--Baxter equation in the form
\begin{multline}\label{fyb}\sum_x W_{MN}\bigg(\begin{matrix}a&b\\x&c\end{matrix}\,\bigg|\,u/v\bigg)
W_{MP}\bigg(\begin{matrix}f&a\\e&x\end{matrix}\,\bigg|\,u/w\bigg)
W_{NP}\bigg(\begin{matrix}e&x\\d&c\end{matrix}\,\bigg|\,v/w\bigg)\\
=\sum_x W_{NP}\bigg(\begin{matrix}f&a\\x&b\end{matrix}\,\bigg|\,v/w\bigg)
W_{MP}\bigg(\begin{matrix}x&b\\d&c\end{matrix}\,\bigg|\,u/w\bigg)
W_{MN}\bigg(\begin{matrix}f&x\\e&d\end{matrix}\,\bigg|\,u/v\bigg),\end{multline}
which is quite non-trivial when viewed as a hypergeometric identity. 
We will prove \eqref{fyb}  for $M=N=P=2$, but it will be clear that the  argument works in general. To this end,
we start with the partition functions defined by the pictures
\setlength{\unitlength}{1.5pt}
$$
 \begin{picture}(160,130)
\put(120,40){\vector(1,1){50}}
\put(110,50){\vector(1,1){60}}
\put(170,10){\vector(-1,1){60}}
\put(170,30){\vector(-1,1){50}}
\put(150,10){\vector(0,1){100}}
\put(160,0){\vector(0,1){120}}
\put(70,40){\vector(-1,1){50}}
\put(80,50){\vector(-1,1){60}}
\put(20,10){\vector(1,1){60}}
\put(20,30){\vector(1,1){50}}
\put(40,10){\vector(0,1){100}}
\put(30,0){\vector(0,1){120}}
\put(24,112){$a$}
\put(34,102){$g$}
\put(68,68){$h$}
\put(24,58){$f$}
\put(24,8){$e$}
\put(78,58){$c$}
\put(51,33){$d$}
\put(51,85){$b$}
\put(164,112){$b$}
\put(154,102){$g$}
\put(164,92){$h$}
\put(164,58){$c$}
\put(163,6){$d$}
\put(110,58){$f$}
\put(137,33){$e$}
\put(137,85){$a$}
\put(120,68){$k$}
\put(12,89){$w$}
\put(9.5,109){$qw$}
\put(28,122){$v$}
\put(36,112){$qv$}
\put(70,81){$u$}
\put(81,71){$qu$}
\put(171,89){$qu$}
\put(171,109){$u$}
\put(155,122){$qv$}
\put(148,112){$v$}
\put(111,81){$qw$}
\put(104,71){$w$}
\put(24,92){$k$}
\end{picture}
$$
As usual, we sum over all admissible heights for the empty slots.
We claim that these two partition functions are equal. 
Indeed, using the  Yang--Baxter equation we can pull the lines labelled $v$ and $qv$ through the  crossings
of the other four lines, thus passing between the two pictures without changing the partition function. Note that this one-sentence pictorial (though rigorous) argument corresponds to an eight-fold application of  the identity \eqref{cyb} and would thus look
quite daunting if written out with explicit formulas. Let us now write the partition function on the left as
\setlength{\unitlength}{2pt}
$$\begin{picture}(130,40)
\put(-10,15){$\displaystyle\sum_{x,y,z,t,\dots}$}
\put(20,0){\vector(0,1){30}}
\put(30,0){\vector(0,1){30}}
\put(60,0){\vector(0,1){30}}
\put(70,0){\vector(0,1){30}}
\put(100,0){\vector(0,1){30}}
\put(110,0){\vector(0,1){30}}
\put(10,10){\vector(1,0){30}}
\put(10,20){\vector(1,0){30}}
\put(50,10){\vector(1,0){30}}
\put(50,20){\vector(1,0){30}}
\put(90,10){\vector(1,0){30}}
\put(90,20){\vector(1,0){30}}
\put(14,4){$e$}
\put(34,4){$d$}
\put(54,4){$d$}
\put(74,4){$c$}
\put(94,4){$x$}
\put(104,4){$z$}
\put(114,4){$b$}
\put(34,14){$t$}
\put(54,14){$t$}
\put(74,14){$h$}
\put(94,14){$y$}
\put(114,14){$g$}
\put(14,24){$f$}
\put(24,24){$y$}
\put(34,24){$x$}
\put(54,24){$x$}
\put(64,24){$z$}
\put(74,24){$b$}
\put(94,24){$f$}
\put(114,24){$a$}
\put(104,24){$k$}
\put(18.5,32){$v$}
\put(27.5,32){$qv$}
\put(58.5,32){$w$}
\put(67,32){$qw$}
\put(98.5,32){$w$}
\put(107,32){$qw$}
\put(41,8){$qu$}
\put(41,18){$u$}
\put(81,8){$qu$}
\put(81,18){$u$}
\put(121,8){$qv$}
\put(121,18){$v$}
\end{picture}
 $$
 Here, we only indicate the summation variables that are shared by two factors; each empty slot carries an additional independent summation variable.
 For fixed $t$, $x$ and $y$, the first factor is of the form \eqref{if}. Thus, after summing over the empty slots, it is independent of $y$ and $t$ and equal to
 $$W_{22}\bigg(\begin{matrix}f&x\\e&d\end{matrix}\,\bigg|\,u/v\bigg). $$
 As $t$ now only appears in the second factor, we can sum over the empty slots together with $t$ and obtain
 $$W_{22}\bigg(\begin{matrix}x&b\\d&c\end{matrix}\,\bigg|\,u/w\bigg). $$
Finally, we sum over $y$, $z$ and the empty slot in the third factor and arrive at the right-hand side of \eqref{fyb}.
The left-hand side is obtained in the same way.

\begin{problem}\label{usp}
Using \eqref{bws} and the fact that $\phi(a,c|u)=\phi(c,a|q/u)$, derive the unitarity of the elliptic SOS model from Proposition \ref{gbp}.
\end{problem}

\begin{problem}
Prove a unitarity relation for fused Boltzmann weights and verify that it leads to a special case of the biorthogonality relations of \S \ref{brs}.
\end{problem}

 \end{document}